\newtheorem{Prop}[equation]{Proposition}
\newtheorem{Thm}[equation]{Theorem}
\newtheorem{Lem}[equation]{Lemma}
\newtheorem{Cor}[equation]{Corollary}
\theoremstyle{definition}\newtheorem{Def}[equation]{Definition}
\newtheorem{Ex}[equation]{Example}
\newtheorem{Rem}[equation]{Remark}
\theoremstyle{definition}
\theoremstyle{definition}\newtheorem{Conj}[equation]{Conjecture}
\theoremstyle{definition}\newtheorem{Not}[equation]{Notation}
\newcommand{\N}{\mathbb{N}}
\newcommand{\Q}{\mathbb{Q}}
\newcommand{\F}{\mathbb{F}}
\newcommand{\oQ}{\overline{\Q}}
\newcommand{\oZ}{\overline{\Z}}
\newcommand{\oP}{\overline{P}}
\newcommand{\Z}{\mathbb{Z}}
\newcommand{\R}{\mathbb{R}}
\newcommand{\oZp}{\overline{\Z_p}}
\newcommand{\oQZp}{\overline{\Z_{(p)}}}
\newcommand{\oQp}{\overline{\Q_p}} 
\newcommand{\oMp}{\overline{M_p}}
\newcommand{\Int}{\textnormal{Int}}   
\newcommand{\IntQ}{\Int_{\Q}}   
\newcommand{\An}{\mathcal{A}_n}
\newcommand{\Gal}{\text{Gal}}
\newcommand{\Br}{\textnormal{Br}}
\newcommand{\PP}{\mathbb{P}}
\newcommand{\mfI}{\mathfrak{I}}
\newcommand{\msS}{\mathscr{S}}
\newcommand{\mcZ}{\mathcal{Z}}
\newcommand{\mcP}{\mathcal{P}}
\numberwithin{equation}{section}
\title{Nontriviality of rings of integral-valued polynomials}
\author{Giulio Peruginelli\footnote{Department of Mathematics ``Tullio Levi-Civita'' University of Padova, Via Trieste, 63 35121 Padova, Italy. gperugin@math.unipd.it}
\and
Nicholas J. Werner\footnote{Department of Mathematics, Computer and Information Science, State University of New York at Old Westbury, Old Westbury, NY 11560, USA. wernern@oldwestbury.edu}}
\date{\today}
\begin{document}
%\maketitle
\leftmark{\noindent  accepted for publication in Mathematische Nachrichten (2025).}
{\let\newpage\relax\maketitle}

\begin{abstract}
   %We characterize those subsets $S$ of the ring of all algebraic integers $\oZ$ for which the ring of integral-valued polynomials on $S$, namely $\IntQ(S,\oZ)=\{f\in\Q[X]\mid f(S)\subseteq\oZ\}$, is nontrivial, meaning that $\Z[X]\subsetneq\IntQ(S,\oZ)$.  We give both topological conditions on $S$ with respect to fixed extensions of the $p$-adic valuations to $\oQ$ and also in terms of pseudo-monotone sequences contained in $S$.

\noindent Let $S$ be a subset of $\oZ$, the ring of all algebraic integers. A polynomial $f \in \Q[X]$ is said to be integral-valued on $S$ if $f(s) \in \oZ$ for all $s \in S$. The set $\IntQ(S,\oZ)$ of all integral-valued polynomials on $S$ forms a subring of $\Q[X]$ containing $\Z[X]$. We say that $\IntQ(S,\oZ)$ is trivial if $\IntQ(S,\oZ) = \Z[X]$, and nontrivial otherwise. We give a collection of necessary and sufficient conditions on $S$ in order $\IntQ(S,\oZ)$ to be nontrivial. Our characterizations involve, variously, topological conditions on $S$ with respect to fixed extensions of the $p$-adic valuations to $\oQ$; pseudo-monotone sequences contained in $S$; ramification indices and residue field degrees; and the polynomial closure of $S$ in $\oZ$.\\

\noindent Keywords: integer-valued polynomial, algebraic integers, pseudo-monotone sequences, polynomial closure.

\noindent MSC Primary 12J20, 13F30, 13A18, 13B22, 13B25, 13F20.\\

\noindent\textbf{Data Availability Statement}\\
Data sharing is not applicable to this article. No new data was created or analyzed for this article.

\noindent\textbf{Conflict of Interest}\\
The authors declare that they have no conflicts of interest.
   \end{abstract}
%%%%%%%%%%%%%%%%%%%%%%%%%%%%%%
%%%%%%%%%%%%%%%%%%%%%%%%%%%%%%
%%%%%%%%%%%%%%%%%%%%%%%%%%%%%%
\section{Introduction and Motivation}
\let\thefootnote\relax\footnotetext{The second author has been partially supported by the University of Padova.}
Let $\oQ$ be a fixed algebraic closure of the field of rational numbers, and let $\oZ$ be the integral closure of $\Z$ in $\oQ$, which is the ring of all algebraic integers. For a subset $S \subseteq \oZ$, a polynomial $f \in \Q[X]$ is said to be \textit{integral-valued} if $f(s) \in \oZ$ for all $s \in S$. When $S\subseteq\Z$, the definition reduces to the classical notion of integer-valued polynomials, i.e., $f(S)\subseteq\Z$. We adopt the general term `integral-valued' for polynomials with rational coefficients to underline the fact that the evaluation occurs at elements which are \textit{integral} over $\Z$. Sets of integral-valued polynomials often form rings with interesting properties, and these rings have been of interest in recent years \cite{ChabPer, HLP, LopWer, Mulay, Per1, PerWer}.% {\color{blue}[LIST ALL RELEVANT PAPERS]}. 

For each positive integer $n$, let $\An$ be the set of algebraic integers of degree at most $n$. In \cite{LopWer} (see also \cite{Per1} and \cite{PerWer}) the following ring of integral-valued polynomials was introduced in order to provide an example of a Pr\"ufer domain strictly contained between $\Z[X]$ and the classical ring of integer-valued polynomials $\Int(\Z)=\{f\in\Q[X] \mid f(\Z)\subseteq\Z\}$:
\begin{equation*}
\IntQ(\An):=\{f\in\Q[X] \mid f(\An)\subseteq\An\}.
\end{equation*}
Note that for $n=1$, we have $\Int(\Z)$. Clearly, $\An\subsetneq\mathcal{A}_{n+1}$ for each $n$. If $\alpha$ is an algebraic number of degree at most $n$ and $f\in\Q[X]$, then the degree of $f(\alpha)$ is also bounded by $n$. As we will later show in Lemma \ref{Psi and index}, there exist polynomials in $\Int_{\Q}(\mathcal{A}_{n+1})$ that are not in $\Int_{\Q}(\An)$. Thus, we have the following chain of strict inclusions (see Lemma \ref{Psi and index}):
\begin{equation}\label{An inclusion chain}
\ldots \subsetneq \Int_{\Q}(\mathcal{A}_{n+1}) \subsetneq \Int_{\Q}(\An) \subsetneq \Int_{\Q}(\mathcal{A}_{n-1}) \subsetneq \ldots \subsetneq \Int_{\Q}(\mathcal{A}_1)=\Int(\Z).
\end{equation}
By \cite[Theorem 3.9]{LopWer}, for each $n$ the ring $\IntQ(\An)$ is a Pr\"{u}fer domain. In particular, this means that the polynomial ring $\Z[X]$ is strictly contained in $\IntQ(\An)$ for each $n\geq 1$. Thus, for $n\geq2$, these rings provide examples of Pr\"ufer domains strictly contained between $\Z[X]$ and $\Int(\Z)$.

More generally, for $S \subseteq \oZ$ one may then consider the ring
\begin{equation*}
\IntQ(S, \oZ):=\{f\in\Q[X] \mid f(S)\subseteq \oZ \}.
\end{equation*}
Note that $\IntQ(\An) = \IntQ(\An,\oZ)$. It is stated in \cite[p.\ 2482]{LopWer} that $\IntQ(S,\oZ)$ lies properly between $\Z[X]$ and $\Int(\Z)$ for every subset $S$ of $\oZ$ properly containing $\Z$. The containment $\IntQ(S,\oZ) \subsetneq \Int(\Z)$ holds whenever $\Z \subsetneq S$ (see \cite{HLP}), but we demonstrate below in Example \ref{Motivating example} that $\Z[X]$ may equal $\IntQ(S,\oZ)$. Hence, the aforementioned statement from \cite[p.\ 2482]{LopWer} is false.\footnote{Thankfully, the fallacious claim from \cite{LopWer} was merely an expository remark, and does not affect the results of that paper.}

\begin{Def} For any $\alpha \in \oZ$, let $O_{\Q(\alpha)}$ be the ring of integers of $\Q(\alpha)$. A subset $S \subseteq \oZ$ is said to have \textit{unbounded degree} if the set $\{[\Q(\alpha):\Q] \mid \alpha \in S\}$ is unbounded. For $\alpha \in \oZ$, the \textit{index} of $\alpha$ is $\iota_\alpha := [O_{\Q(\alpha)} : \Z[\alpha]]$.
\end{Def}

\begin{Ex}\label{Motivating example} Let $S \subseteq \oZ$ be such that $S$ contains a sequence $\{\alpha_n\}_{n \in \N}$ of unbounded degree and $\iota_{\alpha_n}=1$ for all $n$. We will show that $\IntQ(S, \oZ) = \Z[X]$.

Certainly, $\Z[X] \subseteq \IntQ(S, \oZ)$. Let $f(X) \in \IntQ(S, \oZ)$ of degree $d$, and write $f(X) = \sum_{i=0}^d a_i X^i$, where each $a_i \in \Q$. Take an element $\alpha\in S$ of degree $n > d$, and consider $f(\alpha)$. By assumption $f(\alpha)$ is integral over $\Z$ and so it belongs to $O_{\Q(\alpha)}=\Z[\alpha]$. Thus,
\begin{equation*}
f(\alpha)=a_0+a_1\alpha+\ldots+a_d\alpha^d\in\Z[\alpha].
\end{equation*}
As a $\Z$-module, $\Z[\alpha]$ is a free with basis given by $1,\alpha,\ldots,\alpha^{n-1}$, so we also have
\begin{equation*}
f(\alpha)=b_0+b_1\alpha+\ldots+b_{n-1}\alpha^{n-1}
\end{equation*}
for some integers $b_0, \ldots, b_{n-1}$ that are uniquely determined by $f(\alpha)$. Since $n>d$ and every element of the $\Q$-vector space $\Q(\alpha)$ can be written uniquely as a $\Q$-linear combination of $1,\alpha,\ldots,\alpha^{n-1}$, each coefficients $a_i$ must be in $\Z$. Thus, $f(X) \in \Z[X]$ and consequently $\IntQ(S,\oZ) = \Z[X]$.
\end{Ex}

In particular, Example \ref{Motivating example} applies when $S = \oZ$ or when $S = \{\zeta_n\}_{n \in \N}$, where $\zeta_n$ is a primitive $n^\text{th}$ root of unity. Moreover, since $\IntQ(\oZ,\oZ) = \bigcap_{n \in \N} \IntQ(\An)$, Example \ref{Motivating example} demonstrates that the intersection of all the rings in \eqref{An inclusion chain} is equal to $\Z[X]$, which answers a question posed by David Dobbs to the first author in 2014. %We will see later in Example \ref{unbounded trivial example 1} that the condition on $S$ in Example \ref{Motivating example} is only sufficient, and is not necessary in order for the ring $\IntQ(S,\oZ)$ to equal $\Z[X]$.

The goal of this paper is to characterize those subsets $S \subseteq \oZ$ such that $\IntQ(S,\oZ) \ne \Z[X]$. In pursuit of this problem, we will deal with more general rings of integer-valued polynomials, and describe when these rings are trivial, in the sense that they are equal to ordinary rings of polynomials. We refer to the papers \cite{PerWerNontrivial,Rush} for studies on related problems.

\begin{Def}\label{IVP notation}
Let $D$ be an integral domain, let $L$ be a field containing $D$, and let $F$ be a subfield of $L$. For each subset $S \subseteq D$, we define the following ring of integer-valued polynomials:
\begin{equation*}
\Int_F(S,D) := \{f \in F[X] \mid f(S) \subseteq D\}.
\end{equation*}
In the literature, when $F$ is the fraction field of $D$, the above ring is denoted by $\Int(S,D)$. Because our work will often involve changing the field of coefficients of these polynomials, we will always include the subscript $F$ in $\Int_F(S,D)$ for the sake of clarity, even in the case where $F$ is the fraction field of $D$.

It is clear that $(D \cap F)[X] \subseteq \Int_F(S,D)$. We say that $\Int_F(S,D)$ is \textit{trivial} if $(D \cap F)[X] = \Int_F(S,D)$, and \textit{nontrivial} otherwise.
\end{Def}

We seek to describe those subsets $S \subseteq \oZ$ for which $\IntQ(S,\oZ)$ is nontrivial. If $S$ is of bounded degree $n$, then $S \subseteq \An$, and we have $\Z[X] \subsetneq \IntQ(\An) \subseteq \IntQ(S,\Z)$. Thus, the sets of interest in this problem are all of unbounded degree. Example \ref{Motivating example} shows that if $S$ has unbounded degree and $\iota_s = 1$ for all $s \in S$, then $\IntQ(S,\oZ)$ is trivial. However, if we relax the condition on the indices $\iota_s$, then $\IntQ(S,\oZ)$ could be trivial or nontrivial.

\begin{Ex}\label{2Z example}
Let $S = 2\oZ = \{2\alpha \mid \alpha \in \oZ\}$. Then, $S$ has unbounded degree and not all of the indices $\iota_s$ are equal to 1. For this $S$, $X/2 \in \IntQ(S,\oZ)$, and hence $\IntQ(S,\oZ)$ is nontrivial. It is also possible to construct a set $S$ that has unbounded degree and with not all $\iota_s$ equal to 1, but for which $\IntQ(S,\oZ)$ is trivial; see Example \ref{unbounded trivial example 1}.
\end{Ex}

\begin{Ex}\label{nth roots of p example} Fix a prime $p$. For each $k \in \N$ let $a_k = 1 - \tfrac{1}{2^k}$, and take $S = \{p^{a_k}\}_{k \in \N} = \{p^{1/2}, p^{3/4}, p^{7/8}, \ldots\}$. Then, $S$ has unbounded degree, but $f(X) = X^2/p \in \IntQ(S,\oZ)$ because $f(p^{a_k}) = p^{a_{k-1}}$ for all $k \geq 2$.

This example can be generalized. Once again fix $p \in \PP$, but also fix a positive integer $n \geq 2$. For each $k \in \N$, let $b_k = \frac{1}{n-1}(1 - \frac{1}{n^k})$. Take $S = \{p^{b_k}\}_{k \in \N}$. Then, $f(X) = X^n/p \in \IntQ(S,\oZ)$, because $f(p^{b_1}) = 1$ and $f(p^{b_k}) = p^{b_{k-1}}$ for all $k \geq 2$.
\end{Ex}

In this paper, we provide both local and global characterizations of subsets $S \subseteq \oZ$ for which $\IntQ(S,\oZ)$ is nontrivial. We begin our study by considering integer-valued polynomial rings over subsets of valuation domains. Let $V$ be a valuation domain with fraction field $K$, and suppose $S \subseteq V$. In Section \ref{Val dom section}, we review the concept of pseudo-monotone sequences (as defined in \cite{ChabPolCloVal}), and we use these sequences to provide necessary and sufficient conditions on $S$ in order for $\Int_K(S,V)$ to be nontrivial (Theorem \ref{Int(S,V) nontrivial V1}). In Section \ref{Local section}, we return to the case where $S \subseteq \oZ$ and relate $\IntQ(S,\oZ)$ to rings of integer-valued polynomials over valuation domains in $p$-adic fields. In this way, Theorem \ref{Int(S,V) nontrivial V1} can be applied to $\IntQ(S,\oZ)$ (see Theorem \ref{Big thm local case} and Corollary \ref{Big cor local case}), and provides several characterizations of when $\IntQ(S,\oZ)$ is nontrivial.

In Section \ref{Global section}, we examine global conditions that imply that $\IntQ(S,\oZ)$ is trivial or nontrivial. 
%We show (Theorem \ref{global nontrivial conditions}) that the nontriviality of $\IntQ(S,\oZ)$ can be determined through the use of valuations on $\oQ$, and is equivalent to having $\Z[X] \subsetneq \IntQ(S,\oZ) \cap \IntQ(\An)$ for some $n$.
We also consider how the degrees and indices of the algebraic integers in $S$ can influence $\IntQ(S,\oZ)$, present variations on Examples \ref{Motivating example} and \ref{nth roots of p example}. Section \ref{e and f section} examines similar questions with regard to ramification indices and residue field degrees in number fields generated by elements of $S$. We produce a new descending chain of Pr\"ufer domains between $\Z[X]$ and $\Int(\Z)$ by considering for each $n\in\N$ the compositum $\Q^{(n)}=\Q(\An)$ of all the number fields of degree bounded by $n$ and the ring $\IntQ(O_{\Q^{(n)}})$ of polynomials integral-valued over the ring of integers $O_{\Q^{(n)}}$ of this infinite algebraic extension (which is a non-Noetherian almost Dedekind domain with finite residue fields). Moreover, we review some classical examples due to Gilmer \cite[Example 14]{GilmEx}  and Chabert \cite[Example 6.2]{ChabEx} of an infinite algebraic extension $K$ of $\Q$ such that the integral closure $D$ in $K$ of $\Z_{(p)}$ for some prime $p\in\Z$ is an almost Dedekind domain with finite residue fields such that either the set of the residue field degrees (ramification indexes, respectively) of all the prime ideals of $D$ over $\Z_{(p)}$ are  unbounded and $\IntQ(D)$ is trivial. By means of the double boundedness condition of Loper \cite{LopIntD}, in Theorem \ref{nontrivial Prufer for rings} we prove that if $D$ is the integral closure of $\Z_{(p)}$ in some algebraic extension of $\Q$, then $\IntQ(D)$ is nontrivial if and only if $\IntQ(D)$ is Pr\"ufer.

The final section of the paper considers a broad generalization of Example \ref{2Z example}, describes the polynomial closure in $\oZ$ of a subset $S$ of the ring of all algebraic integers, and relates this concept to the nontriviality of $\IntQ(S,\oZ)$.

%%%%%%%%%%%%%%%%%%%%%%%%%%%
%%%%%%%%%%%%%%%%%%%%%%%%%%%
%%%%%%%%%%%%%%%%%%%%%%%%%%%
\section{Nontriviality over Valuation Domains}\label{Val dom section}
Throughout, $V$ is a valuation domain with valuation $v$, value group $\Gamma_v$, maximal ideal $M$, and fraction field $K$. We begin by recalling the  definitions of two kinds of pseudo-monotone sequences and related objects, which were introduced in \cite{ChabPolCloVal} in order to study the polynomial closure of subsets of a rank one valuation domain and studied further in \cite{PerPrufer, PS2}. These definitions originate from the notion of pseudo-convergent sequence given by Ostrowski in \cite{Ostrowski}, and we will not use this kind of sequence in this paper. After proving some basic properties of such sequences, we use them to characterize when $\Int_K(S,V)$ is nontrivial (see Theorem \ref{Int(S,V) nontrivial V1}).

\begin{Def}\label{pseudo-monotone def}
Let $\Lambda$ be an index set which we assume to be infinite and well-ordered, and let $E = \{s_i\}_{i \in \Lambda}$ be a sequence in $K$. Then, $E$ is said to be
\begin{enumerate}[(i)]
%\item \textit{pseudo-convergent} if $v(s_{i}-s_j)<v(s_{j}-s_{\color{red} k \normalcolor})$ for all $i<j<k\in\Lambda$;
\item \textit{pseudo-divergent} if $v(s_{i}-s_j)>v(s_{j}-s_{k})$ for all $i<j<k\in\Lambda$;
\item \textit{pseudo-stationary} if $v(s_i-s_j)=v(s_{k}-s_{\ell})\in\Gamma_v$ for all $i\neq j \in \Lambda$, $k \neq \ell \in \Lambda$.
\end{enumerate}
A sequence that satisfies either of these two properties is said to be \textit{pseudo-monotone}.
\end{Def}

\begin{Def}\label{Gauge def} Let $E = \{s_i\}_{i \in \Lambda}$ be a pseudo-monotone sequence in $K$. We define the \emph{gauge} of $E$ as the following sequence $\{\delta_i\}_{i \in \Lambda}$ of $\Gamma_v$:
\begin{enumerate}[(i)]
%\item if $E$ is pseudo-convergent, for each $i \in \Lambda$ we set $\delta_i = v(s_{i}-s_{j})$, with $j \in \Lambda$ and $j>i$;
\item if $E$ is pseudo-divergent, for each $i \in \Lambda$ we set $\delta_i=v(s_{i}-s_{j})$, with $j\in\Lambda$ and $j<i$;
\item if $E$ is pseudo-stationary, we let $\delta= v(s_{i}-s_{j})$ for any $i,j\in\Lambda$, $i \ne j$.
\end{enumerate}
\end{Def}

\begin{Def}\label{Breadth def} Each pseudo-monotone sequence $E = \{s_i\}_{i \in \Lambda}$ in $K$  has an associated \textit{breadth ideal}, which we denote by $\Br(E)$. The definition of $\Br(E)$ depends on which one of the two properties from Definition \ref{pseudo-monotone def} that $E$ satisfies.
\begin{enumerate}[(i)]
%\item If $E$ is pseudo-convergent, then $\Br(E) = \{ x \in V \mid v(x) > \delta_E \text{ for all } n \in \N\}$
%\item If $E$ is pseudo-convergent, then $\Br(E) = \{ x \in K \mid v(x) > \delta_{i} \text{ for all } i \in \Lambda\}$;
%\item If $E$ is pseudo-divergent, then $\Br(E) = \{ x \in V \mid v(x) > \delta_E \text{ for some } n \in \N\}$
\item If $E$ is pseudo-divergent, then $\Br(E) = \{ x \in K \mid v(x) > \delta_{i} \text{ for some } i \in \Lambda\}$;
\item If $E$ is pseudo-stationary, then $\Br(E) = \{ x \in K \mid v(x) \geq \delta\}$.
\end{enumerate}
For a pseudo-divergent sequence $E$, it is possible that $\Br(E)=K$ (this occurs when the gauge of $E$ is coinitial in $\Gamma_v$). In all the other cases, $\Br(E)$ is a fractional $V$-ideal. When $V$ has rank one and $E=\{s_n\}_{n\in\N}$ is a pseudo-monotone sequence, the \textit{breadth} $\delta_E \in \R \cup \{\infty\}$ of $E$ is $\delta_E := \lim_{n \to \infty} v(s_{n+1}-s_n)$. In this case, $\delta_{i}$ can be replaced with $\delta_E$ in each of the above definitions for $\Br(E)$.
\end{Def}

Note that if $E$ is pseudo-stationary and $\delta = v(c)$ for some $c\in K$, then $\Br(E)=cV$ is a principal (fractional) ideal. Also, if $E \subset V$ is any pseudo-monotone sequence, then $\Br(E) \subseteq V$, and $\Br(E)\subseteq M$ if $E$ is pseudo-divergent.

Finally, we recall that, given $a\in K$ and $\delta\in\Gamma_v$, the ball of center $a$ and radius $\delta$ is $B(a,\delta)=\{x\in K\mid v(x-a)\geq\delta\}$. For each $b\in B(a,\delta)$ it is well known that $B(a,\delta)=B(b,\delta)$.

\begin{Lem}\label{delta balls lem}
Let $S \subseteq V$. Assume there exist a finite subset $T \subseteq S$ and $b \in M$ such that
for all $s \in S$, there exists $t \in T$ such that $v(s - t) \geq v(b)$.
\begin{enumerate}[(1)]
\item If $S$ contains a pseudo-divergent sequence $E$, then $\Br(E) \subseteq bM \subsetneq M$.
\item If $S$ contains a pseudo-stationary sequence $E$, then $\Br(E) \subseteq bV \subsetneq V$.
\end{enumerate}
\end{Lem}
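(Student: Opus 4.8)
The plan is to treat the two cases separately, using in each case the "covering by a single ball" hypothesis together with the defining inequalities of the relevant pseudo-monotone sequence. In both cases the finiteness of $T$ is essential: since $E$ is infinite and $T$ is finite, by pigeonhole there is some fixed $t \in T$ such that $v(s_i - t) \geq v(b)$ for infinitely many indices $i$, and by passing to a subsequence (which does not change $\Br(E)$, since the breadth ideal is determined by the tail behaviour of the gauge) I may assume $v(s_i - t) \geq v(b)$ for \emph{all} $i \in \Lambda$.

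For case (1), $E = \{s_i\}$ is pseudo-divergent, so by Definition \ref{pseudo-monotone def}(ii) the values $v(s_i - s_j)$ strictly decrease as the smaller index grows, and the gauge is $\delta_i = v(s_i - s_j)$ for any $j < i$. The key computation is: for $j < i$, write $s_i - s_j = (s_i - t) - (s_j - t)$; I want to conclude $v(s_i - s_j) \geq v(b) + (\text{something in } M)$, i.e.\ that $\delta_i \in bM$ for all $i$. Fixing $j$ and letting $i$ range, the values $\delta_i = v(s_i - s_j)$ eventually lie strictly below $\delta_j$, hence are bounded above by a fixed element of $\Gamma_v$; combined with the lower bound $v(s_i - t), v(s_j - t) \geq v(b)$ and a short ultrametric argument this forces $\delta_i > v(b)$ for all large $i$, equivalently $\delta_i \in bM$. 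Since $\Br(E) = \{x \mid v(x) > \delta_i \text{ for some } i\}$ and every $\delta_i$ satisfies $v(b) < \delta_i$, we get $\Br(E) \subseteq \{x \mid v(x) > v(b)\} = bM \subsetneq M$.

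For case (2), $E$ is pseudo-stationary with common value $\delta = v(s_i - s_j)$ for all $i \ne j$. Pick two indices; both $s_i - t$ and $s_j - t$ have value $\geq v(b)$, so $\delta = v(s_i - s_j) \geq v(b)$ by the ultrametric inequality. Hence $\Br(E) = \{x \mid v(x) \geq \delta\} \subseteq \{x \mid v(x) \geq v(b)\} = bV$. The strict inclusion $bV \subsetneq V$ holds because $b \in M$, so $v(b) > 0$ and $1 \notin bV$.

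The main obstacle is the bookkeeping in case (1): one must be careful that the reduction to a single $t$ and a single subsequence really is harmless for $\Br(E)$, and that the strict inequality $\delta_i > v(b)$ (rather than merely $\geq$) comes out, which is what distinguishes the conclusion $\Br(E) \subseteq bM$ from the weaker $\Br(E) \subseteq bV$. The pseudo-divergence inequalities give strictness for free once one observes that for each fixed small index $j$, all sufficiently large $\delta_i$ are $<\delta_j$ and $>$ any value realized among the $s_\ell - t$ with $\ell$ small — this is the step I would write out most carefully.
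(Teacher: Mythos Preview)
Your approach matches the paper's: use pigeonhole on the finite set $T$ to pass to an infinite subsequence $E' \subseteq E$ lying in a single ball $B(t, v(b))$, observe that $\Br(E') = \Br(E)$, and then apply the ultrametric inequality to $s_i - s_j = (s_i - t) - (s_j - t)$. Case~(2) is handled exactly as in the paper.

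In case~(1), however, you are making the argument harder than it is. Once every $s_i$ in the subsequence satisfies $v(s_i - t) \geq v(b)$, the ultrametric gives $\delta_i = v(s_i - s_j) \geq v(b)$ for all $i$. This non-strict bound is \emph{already sufficient} for the conclusion: if $x \in \Br(E)$ then $v(x) > \delta_i$ for some $i$, hence $v(x) > \delta_i \geq v(b)$ and $x \in bM$. So the ``main obstacle'' you identify---obtaining the strict inequality $\delta_i > v(b)$---is not an obstacle at all. And if one does want strictness (the paper records it too), it follows in one line: the gauge is strictly decreasing with every term $\geq v(b)$, so for each $i$ pick any later index $j$ and note $\delta_i > \delta_j \geq v(b)$. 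The mechanism you sketch instead, involving ``values realized among the $s_\ell - t$ with $\ell$ small'', is neither needed nor clearly formulated, and should be dropped. Finally, a minor type error: ``$\delta_i \in bM$'' does not make sense since $\delta_i \in \Gamma_v$; write $\delta_i > v(b)$.
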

Note that the assumption in this lemma is equivalent to saying that $S$ is covered by a finite union of balls $B(t,\delta)$, where $\delta > 0$ is fixed and each $t\in T$. 
\begin{proof}
Let $\delta = v(b) > 0$. Note that if either one of the two conditions is satisfied, then $S$ is infinite. In particular, $b\not=0$, because otherwise $S$ would be finite. Since $T$ is finite, there exists $t \in T$ such that $B(t,\delta)$ contains an infinite subsequence $E' = \{s_i\}_{i \in \Lambda'}$ of $E$. If $E$ is pseudo-divergent (respectively, pseudo-stationary), then $E'$ is also pseudo-divergent (resp., pseudo-stationary), and $\Br(E') = \Br(E)$. For any distinct $i, j \in \Lambda'$, we have $v(s_i - s_j) = v((s_i - t) + (t - s_j)) \geq \delta$.

When $E'$ is pseudo-divergent, the gauge of $E'$ is strictly decreasing, and the above inequality implies that $v(s_i - s_j) > \delta$. It follows that $\Br(E') \subseteq bM \subsetneq M$. If instead $E'$ is pseudo-stationary, then $v(s_i-s_j)$ is constant for all $i, j \in \Lambda'$. In this case, we have $v(s_i-s_j) \geq \delta > 0$, which means that $\Br(E') \subseteq bV \subsetneq V$.
\end{proof}

For a value group $\Gamma_v$ associated to the valuation $v$, the divisible hull of $\Gamma_v$ is $\Q\Gamma_v:=\Gamma_v \otimes_\Z \Q$. Note that if $L/K$ is an algebraic extension and $w$ is an extension of $v$ to $L$, then $w(\alpha) \in \Q\Gamma_v$ for all $\alpha \in L$.

\begin{Lem}\label{delta balls equivalence}
Let $S \subseteq V$. The following are equivalent.
\begin{enumerate}[(1)]
\item  There exist a finite subset $T \subseteq S$ and $\lambda \in \Q\Gamma_v\cup\{\infty\}$ such that $\lambda>0$ and for all $s \in S$, there exists $t \in T$ such that $v(s - t) \geq \lambda$.
\item There exist a finite subset $T \subseteq S$ and $\delta \in \Gamma_v\cup\{\infty\}$ such that $\delta>0$ and for all $s \in S$, there exists $t \in T$ such that $v(s - t) \geq \delta$.
\item There exists $b \in M$ such that $S/bV$ is finite.
\end{enumerate}
\end{Lem}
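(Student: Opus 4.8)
The plan is to read $S/bV$ as the image of $S$ under the canonical projection $V \to V/bV$, so that ``$S/bV$ is finite'' means this image is a finite set, and then to prove the two implications separately. The bridge between the conditions is the elementary observation that, for a nonzero $b \in M$ with $v(b) = \delta$, two elements $s, s' \in V$ are congruent modulo $bV$ if and only if $v(s - s') \geq \delta$, i.e.\ if and only if $s' \in B(s,\delta)$.

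For $(1) \Rightarrow (2)$ I would first dispose of the degenerate case $\delta = \infty$: then $v(s-t) = \infty$ forces $s = t$, so $S = T$ is finite and any nonzero $b \in M$ makes $S/bV$ finite. Otherwise $\delta \in \Gamma_v$ with $\delta > 0$, so $\delta = v(b)$ for some nonzero $b \in M$. By hypothesis each $s \in S$ satisfies $v(s - t) \geq \delta = v(b)$ for some $t \in T$, i.e.\ $s \equiv t \pmod{bV}$. Hence every coset of $bV$ meeting $S$ contains an element of the finite set $T$, so the image of $S$ in $V/bV$ has at most $|T|$ elements, giving (2).

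For $(2) \Rightarrow (1)$ I would again treat the boundary case first: if $b = 0$ then $bV = (0)$ and finiteness of $S/bV$ means $S$ is finite, so take $T = S$ and $\delta = \infty$. If $b \neq 0$, set $\delta := v(b)$, which is positive since $b \in M$. Because the image of $S$ in $V/bV$ is finite, one can select finitely many elements $t_1, \dots, t_m$ of $S$ whose residues modulo $bV$ exhaust that image; put $T = \{t_1, \dots, t_m\} \subseteq S$. For each $s \in S$ there is an index $i$ with $s \equiv t_i \pmod{bV}$, hence $v(s - t_i) \geq v(b) = \delta$, which is exactly (1).

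The argument is short and amounts to bookkeeping. The only points that need care are choosing the representatives $t_i$ inside $S$ (so that $T \subseteq S$, as the statement demands), translating cleanly between congruence modulo $bV$ and the valuation inequality, and not overlooking the degenerate cases $\delta = \infty$ and $b = 0$ (both of which correspond to $S$ being finite). I do not anticipate any genuine obstacle.
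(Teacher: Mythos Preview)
Your proposal is correct and follows essentially the same approach as the paper: both arguments reduce to the equivalence $s \equiv t \pmod{bV} \iff v(s-t) \geq v(b)$ and pick coset representatives inside $S$. Your handling of the degenerate cases is slightly more explicit than the paper's (which simply notes that $\delta=\infty$ corresponds to $b=0$, i.e.\ $S$ finite), but the substance is identical.
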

\begin{proof}
Note that $\lambda = \infty$ or $\delta=\infty$ in (1) or (2) corresponds to $b=0$ in (3). In this case, the set $S$ is finite. So, we assume throughout that $\lambda\ne \infty$ and $\delta \ne \infty$.

% PREVIOUSLY: If possible, choose $b \in V$ such that $v(b) = \delta$. If no such $b$ exists, then $v$ is not discrete. So, in this case there exists $\delta' \in \Gamma_v$ and $b \in V$ such that $0 < \delta' < \delta$, and $v(b) = \delta'$.

(1) $\Rightarrow$ (2) Assume that (1) holds. Let $\Gamma' = \{\gamma \in \Gamma_v \mid \gamma  \geq \lambda\}$. Suppose first that $\Gamma'$ contains a minimum element $\gamma'$. Then, for all $x \in V$,  $v(x) \geq \lambda$ if and only if $v(x) \geq \gamma'$, so we may take $\delta = \gamma'$ and (2) holds. So, assume that $\Gamma'$ has no minimum element. In particular, this means that $\lambda \notin \Gamma_v$.

Since $\lambda \in \Q\Gamma_v\setminus \Gamma_v$, there exists $\gamma \in \Gamma_v$ and $m \in \Z$, $m \geq 2$, such that $\lambda = \gamma/m$. For each $1 \leq k \leq m-1$, let 
\begin{equation*}
I_k = \{\varepsilon \in \Gamma' \mid k\lambda < \varepsilon \leq (k+1)\lambda\}.
\end{equation*}
We claim that there exists $k \in \{1, \ldots, m-1\}$ and $\varepsilon, \varepsilon' \in \Gamma'$ such that
\begin{equation}\label{lambda ineq}
k\lambda < \varepsilon < \varepsilon' \leq (k+1)\lambda.
\end{equation}
To see this, let $\delta_1 = \gamma = m\lambda \in \Gamma'$. Since $\Gamma'$ has no minimum value in $\Gamma_v$, there exist $\delta_2, \ldots, \delta_m \in \Gamma'$ such that $\delta_1 > \delta_2 > \cdots > \delta_m > \lambda$. Since there are $m-1$ sets $I_1, \ldots, I_{m-1}$, some $I_k$ must contain at least two distinct elements $\delta_i, \delta_j \in \{\delta_1, \ldots, \delta_m\}$ such that $\delta_i > \delta_j$. For this $k$, taking $\varepsilon=\delta_j$ and $\varepsilon'=\delta_i$ satisfies \eqref{lambda ineq}. Moreover, \eqref{lambda ineq} implies that $0 < \varepsilon'-\varepsilon < \lambda$. Thus, we may take $\delta = \varepsilon'-\varepsilon \in \Gamma_v$, and then for $x \in V$, $v(x) \geq \lambda$ implies that $v(x) \geq \delta > 0$. Hence, (2) holds.\\

(2) $\Rightarrow$ (3) Assume that (2) holds. Choose $b \in M$, such that $v(b) = \delta$. Let $T = \{t_1, \ldots, t_m\}$. We claim that $S/bV = \{t_i + bV \mid 1 \leq i \leq m\}$. Indeed, given $s \in S$, there exists $i$ such that $v(s-t_i) \geq \delta=v(b)$. So, $s-t_i \in bV$ and hence $s+bV = t_i + bV$.\\

(3) $\Rightarrow$ (1) Assume that (3) holds. Let $t_1, \ldots, t_m \in S$ be such that $S/bV = \{t_i + bV \mid 1 \leq i \leq m\}$. Take $T = \{t_1, \ldots, t_m\}$ and $\lambda = v(b) > 0$. Then, for any $s \in S$, there exists $i$ such that $s+bV = t_i +bV$, and for this $i$ we have $v(s-t_i) \geq v(b) = \lambda$.\end{proof}

\begin{Thm}\label{Int(S,V) nontrivial V1}
Let $S \subseteq V$. The following are equivalent.
\begin{enumerate}[(1)]
\item $\Int_K(S,V)$ is nontrivial.
\item There exist a finite subset $T \subseteq S$ and $\lambda \in \Q\Gamma_v\cup\{\infty\}$ such that $\lambda>0$ and for all $s \in S$, there exists $t \in T$ such that $v(s - t) \geq \lambda$.
\item  There exist a finite subset $T \subseteq S$ and $\delta\in\Gamma_v\cup\{\infty\}$ such that $\delta>0$ and for each $s \in S$, there exists $t \in T$ such that $v(s-t) \geq \delta$.
\item There exists $b \in M$ such that $S/bV$ is finite.
\item $S$ contains neither a pseudo-divergent sequence $E$ with $\Br(E) = M$, nor a pseudo-stationary sequence $E$ with $\Br(E) = V$.
\end{enumerate}
\end{Thm}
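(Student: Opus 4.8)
The plan is to establish $(3)\Rightarrow(1)$, $(1)\Rightarrow(4)$, and $(4)\Rightarrow(3)$; since Lemma~\ref{delta balls equivalence} already gives $(2)\Leftrightarrow(3)$ (and Lemma~\ref{delta balls lem} gives $(3)\Rightarrow(4)$ for free), this closes the equivalences. I assume throughout that $V$ is not a field, i.e.\ $M\neq(0)$. For $(3)\Rightarrow(1)$ I would argue directly: pick $b\in M$ with $S/bV$ finite, taking $b$ to be any nonzero element of $M$ if $S$ happens to be finite, so that $b\neq0$; let $t_1,\dots,t_m\in S$ be representatives of the cosets of $S/bV$ and set $g(X)=\prod_{i=1}^m(X-t_i)\in V[X]$. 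Each $s\in S$ is congruent mod $bV$ to some $t_i$, so $b\mid g(s)$ in $V$; hence $g(X)/b\in\Int_K(S,V)$, and since $g$ is monic of degree $m$ while $b$ is a nonzero non-unit, $g/b\notin V[X]$, so $\Int_K(S,V)$ is nontrivial.

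For $(1)\Rightarrow(4)$ I would prove the contrapositive: if $S$ contains a pseudo-divergent sequence $E$ with $\Br(E)=M$, or a pseudo-stationary sequence $E$ with $\Br(E)=V$, then $\Int_K(S,V)=V[X]$. Fix $f=\sum_{i=0}^d a_iX^i\in\Int_K(S,V)$; since $E\subseteq S$ I only use $f(E)\subseteq V$ and must deduce $a_i\in V$ for all $i$. If $E=\{s_i\}$ is pseudo-stationary with $\Br(E)=V$, then $v(s_i-s_j)=0$ for all $i\neq j$, so any difference of distinct terms of $E$ is a unit; choosing $d+1$ terms and using Lagrange interpolation, the leading coefficient $a_d=\sum_\ell f(s_{j_\ell})\big/\prod_{k\neq\ell}(s_{j_\ell}-s_{j_k})$ is a $V$-linear combination of elements of $V$, hence lies in $V$, and subtracting $a_dX^d$ and inducting on $d$ yields $f\in V[X]$. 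If $E$ is pseudo-divergent with $\Br(E)=M$, then after translating $S$, $E$, $f$ by a term of $E$ (this preserves degrees and the conclusion $f\in V[X]$) and discarding an initial segment of $E$, I may take the terms of $E$ to be $s_0=0,s_1,s_2,\dots$ with $\delta_i:=v(s_i)$ strictly decreasing and coinitial in $\Gamma_v^{>0}$ (coinitiality being exactly $\Br(E)=M$), and $v(s_i-s_j)=\delta_i$ whenever $i>j$. Subtracting $a_0=f(s_0)\in V$, assume $a_0=0$, and suppose some $v(a_i)<0$; set $\mu=\min_{1\le i\le d}v(a_i)<0$ with least minimizer $i_0$. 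Using coinitiality I choose indices $j_1<\dots<j_{i_0}$ with the running partial sums of the $\delta_{j_r}$ always below $-\mu$ (possible since the remaining slack stays positive), so that $i_0\delta_j\le\delta_{j_1}+\dots+\delta_{j_{i_0}}<-\mu$ for every $j\ge j_{i_0}$; among those $j$ I pick one for which $\delta_j$ avoids the finitely many values that would make two of the numbers $v(a_i)+i\delta_j$ $(1\le i\le d)$ coincide. Then $v(f(s_j))=\min_{1\le i\le d}\bigl(v(a_i)+i\delta_j\bigr)\le\mu+i_0\delta_j<0$, contradicting $f(s_j)\in V$; hence every $a_i\in V$.

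For $(4)\Rightarrow(3)$ I would again use the contrapositive: assuming $S/bV$ is infinite for every $b\in M$, I produce a bad sequence. If $S$ meets infinitely many residue classes modulo $M$, pick one element from each to get a pseudo-stationary sequence all of whose differences of distinct terms are units, so $\Br(E)=V$. Otherwise $S$ is a finite union of the sets $S\cap(a+M)$; since a finite union of sets satisfying $(3)$ again satisfies $(3)$ (pass to a common $b$), some piece $S_a:=S\cap(a+M)$ — necessarily infinite — still has $S_a/bV$ infinite for all $b\in M$. Fix $s_0\in S_a$: if $\{v(s-s_0):s\in S_a\setminus\{s_0\}\}$ had a positive lower bound $\epsilon_0$, then $S_a\subseteq B(s_0,\epsilon_0)$ and $S_a/bV$ would be finite for $b$ with $v(b)=\epsilon_0$, a contradiction; so this set of valuations is coinitial in $\Gamma_v^{>0}$. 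Extracting $s_1,s_2,\dots\in S_a$ with $v(s_n-s_0)$ strictly decreasing and still coinitial in $\Gamma_v^{>0}$, the ultrametric inequality makes $\{s_0,s_1,s_2,\dots\}$ pseudo-divergent with gauge $\delta_n=v(s_n-s_0)$, hence $\Br(E)=M$.

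I expect the pseudo-divergent half of $(1)\Rightarrow(4)$ to be the main obstacle: the hypothesis $\Br(E)=M$ only asserts coinitiality of the gauge in an arbitrary totally ordered group, and forcing $v(f(s_j))<0$ requires $i_0\delta_j$ — not merely $\delta_j$ — to drop below $-\mu$, which is precisely why the indices $j_1,\dots,j_{i_0}$ must be chosen iteratively so that partial sums of the gauge stay small; ruling out cancellation among the monomials $a_is_j^i$ by avoiding the collision values of $\delta_j$ is the secondary point needing care. A further subtlety, less essential, is that the decreasing sequence of valuations extracted in $(4)\Rightarrow(3)$ must itself remain coinitial in $\Gamma_v^{>0}$ rather than converging to a positive element.
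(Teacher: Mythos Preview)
Your argument is correct and runs around the cycle in a genuinely different order from the paper. The paper proves $(1)\Rightarrow(2)\Leftrightarrow(3)\Rightarrow(4)\Rightarrow(1)$: for $(1)\Rightarrow(2)$ it picks a minimal-degree $f\in\Int_K(S,V)\setminus V[X]$, passes to the splitting field of its numerator, and covers $S$ by balls centred at the roots; for $(4)\Rightarrow(1)$ it assumes triviality and uses the failure of every $\prod_i(X-s_i)/d$ to be integer-valued (your paper's equation~\eqref{eq1}) to iteratively build a bad sequence. You instead run $(3)\Rightarrow(1)\Rightarrow(4)\Rightarrow(3)$. Your $(3)\Rightarrow(1)$ via the single polynomial $\prod_i(X-t_i)/b$ is strictly more elementary than the paper's splitting-field step and never leaves $K$. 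Your $(1)\Rightarrow(4)$ --- a bad sequence forces triviality, by Lagrange interpolation in the pseudo-stationary case and by the estimate $\mu+i_0\delta_j<0$ in the pseudo-divergent case --- is an implication the paper does not establish directly; it is more computational, but it produces an explicit witness $s_j$ at which any candidate $f\notin V[X]$ fails. Your $(4)\Rightarrow(3)$ is close in spirit to the paper's $(4)\Rightarrow(1)$ (both construct a bad sequence), but yours works from the cleaner hypothesis ``$S/bV$ infinite for every $b\in M$'' and so avoids the paper's intermediate step of first exhausting the pseudo-stationary possibility up to some index $N$ and then pigeonholing among the sets $E_1,\dots,E_N$. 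Both proofs share the residual subtlety you flag at the end: extracting an $\N$-indexed pseudo-divergent sequence with $\Br(E)=M$ tacitly assumes countable coinitiality of $\Gamma_v^{>0}$, which the paper also uses when it asserts that its auxiliary sequence $\{\delta_k\}_{k\in\N}$ ``decreases to $0$''.
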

\begin{proof}
The theorem is trivial if $S$ is finite (which is equivalent to having $\lambda = \infty$ in (2), $\delta=\infty$ in (3), or $b=0$ in (4)), so we will assume throughout that $S$ is infinite.

(1) $\Rightarrow$ (2) Assume $V[X] \subsetneq \Int_K(S,V)$. Among all polynomials in $\Int_K(S,V) \setminus V[X]$, choose $f(X)$ of minimal degree $m$. Then, $m \geq 1$. Write $f(X) = g(X)/d$, where $g(X) \in V[X]$ and $d \in M$. Let $a$ be the leading coefficient of $g$.

We claim that $v(a) < v(d)$. If $v(a) \geq v(d)$, then $aX^m/d \in V[X]$, and $f(X) - aX^m/d$ would be either an element of $V[X]$, or an element of $\Int_K(S,V) \setminus V[X]$ of degree less than $m$. We reach a contradiction in either case. So, $v(a) < v(d)$.

Next, let $L/K$ be the splitting field of $g(X)$, and let $w$ be an extension of $v$ to $L$. In $L$, we have $g(X) = a(X-\alpha_1) \cdots (X-\alpha_m)$ for some $\alpha_1, \ldots, \alpha_m \in L$. Let $\lambda = (v(d)-v(a))/m \in \Q\Gamma_v$, which is greater than 0. Then, for all $s \in S$, there exists $1 \leq i \leq m$ such that $w(s-\alpha_i) \geq \lambda$. Indeed, if this is not the case, then for some $s \in S$,
\begin{equation*}
v(f(s)) = w(f(s)) < v(a) + m\lambda - v(d) = 0,
\end{equation*}
and $f \notin \Int_K(S,V)$.

For each $1 \leq i \leq m$, let $B_w(\alpha_i, \lambda) = \{x \in L \mid w(x-\alpha_i) \geq \lambda \}$ and let $B_i = B_w(\alpha_i, \lambda) \cap S$. Whenever $B_i$ has nonempty intersection with $S$, we may choose $t_i \in B_i \cap S$. 
%\begin{equation*}
%B_w(\alpha_i,\delta) \cap K= B(t_i,\delta).
%\end{equation*}
Take $T = \{t_i \mid 1 \leq i \leq m \text{ and } B_i \cap S \ne \varnothing\}$. Note that $T\not=\varnothing$ because we showed above that $S\subseteq \bigcup_{i=1}^m B_w(\alpha_i,\lambda)$. Given $s \in S$, find $i$ such that $w(s-\alpha_i) \geq \lambda$. Then, since $w(t_i - \alpha_i) \geq \lambda$, we have $v(s-t_i) = w(s-t_i) \geq \lambda$. Thus, (2) holds. \\ 

(2) $\Leftrightarrow$ (3) $\Leftrightarrow$ (4) This is the content of Lemma \ref{delta balls equivalence}.\\

(4) $\Rightarrow$ (5) This follows from Lemma \ref{delta balls lem}.\\

(5) $\Rightarrow$ (1) We prove the contrapositive. Assume that $V[X] = \Int_K(S,V)$. Then, for all $m \geq 1$ and all $s_1, \ldots, s_m \in S$, and all $d \in M$, the polynomial $(X-s_1) \cdots (X-s_m)/d$ is not in $\Int_K(S,V)$. Consequently,
\begin{equation}\label{eq1}
\begin{aligned}
&\text{for all $s_1,\ldots,s_m \in S$, and all $\gamma \in \Gamma_v$ with $\gamma > 0$,}\\
&\text{there exists $t \in S$ such that $v(\textstyle\prod_{i=1}^m(t-s_i)) < \gamma$.}
\end{aligned}
\end{equation}

Suppose first that $v$ is discrete. By \eqref{eq1}, given any $s_1, \ldots, s_m \in S$, there exists $s_{m+1} \in S$ such that $v(\prod_{i=1}^m(s_{m+1}-s_i))=0$. So, we can construct a sequence $E = \{s_1, s_2, \ldots \}$ such that $v(s_i-s_j)=0$ for all $i \ne j$. This sequence is pseudo-stationary with $\Br(E)=V$.

For the remainder of the proof, we will assume that $v$ is not discrete. If $S$ contains a pseudo-stationary sequence $E$ with $\Br(E)=V$, then we are done. So, we will further assume that this is not the case. We will demonstrate how to construct a pseudo-divergent sequence $E \subseteq S$ with $\Br(E)=M$.

Choose $s_1 \in S$. For each $n \geq 1$, choose---if possible---$s_{n+1} \in S$ such that $v(\prod_{i=1}^n(s_{n+1}-s_i))=0$. We cannot do this indefinitely, since then $\{s_n\}_{n \in \N}$ would be pseudo-stationary with breadth ideal equal to $V$. So, there exists $N \geq 1$ such that 
\begin{equation}\label{eq2}
v(s_i-s_j) = 0 \text{ for all } 1 \leq i < j \leq N, \text{ and } v(\textstyle\prod_{i=1}^N (s - s_i)) > 0 \text{ for all } s \in S.
\end{equation}
In other words, for each $s\in S$, there exists a unique $i\in\{1,\ldots,N\}$ such that $v(s-s_i)>0$ and for every $j\in\{1,\ldots,N\}$, $j\not=i$, we have $v(s-s_j)=0$.

%\noindent{\color{blue} \underline{Option 1}:}\\ 
%We remark that since $\{s_1,\ldots,s_N\}$ is a complete set of representatives for $S/M$, 
Let $\delta_1\in\Gamma_v,\delta_1>0$. By \eqref{eq1} and \eqref{eq2}, there exists $s_{N+1} \in S$ such that $0 < v(\prod_{i=1}^N(s_{N+1}-s_i)) < \delta_1$. By the previous paragraph, we have $v(s_{N+1}-s_{i_1}) > 0$ for a unique $i_1 \in \{1,\ldots,N\}$ and $v(s_{N+1}-s_j)=0$ for every $j \in \{1,\ldots,N\} \setminus \{i_1\}$. In particular, $0 < v(s_{N+1}-s_{i_1}) < \delta_1$. We choose now $\delta_2 \in \Gamma_v$ such that $0 < \delta_2 < v(s_{N+1}-s_{i_1})$ and correspondingly, by \eqref{eq1} and \eqref{eq2}, we can find $s_{N+2}\in S$ such that $0 < v(\prod_{i=1}^{N+1}(s_{N+2}-s_i)) < \delta_2$. As before, there exists a unique $i_2\in\{1,\ldots,N\}$ such that $v(s_{N+2}-s_{i_2})>0$ and $v(s_{N+2}-s_j)=0$ for every $j\in\{1,\ldots,N\}\setminus\{i_2\}$. 

At this point, we have 
\begin{equation*}
0 < v(s_{N+2}-s_{i_2}) < \delta_2 < v(s_{N+1}-s_{i_1}) < \delta_1.
\end{equation*}
If we continue in this way, we can produce a sequence $E=\{s_{N+k}\}_{k \in \N} \subseteq S$ and a sequence $\{\delta_k\}_{k \in \N} \subseteq \{\gamma \in \Gamma_v \mid \gamma > 0\}$ such that $\{\delta_k\}_{k \in \N}$ decreases to 0 and for each $k, \ell \in \N$, $k < \ell$ we have
\begin{equation}\label{eq3}
0 < v(s_{N+\ell} - s_{i_\ell}) < \delta_{\ell} < v(s_{N+k} - s_{i_k}) < \delta_{k}
\end{equation}
where $i_k, i_\ell \in \{1,\ldots,N\}$.  For each $1\leq i\leq N$, define 
\begin{equation*}
E_i:= \{s_{N+k} \mid k \in \N \text{ and } v(s_{N+k} - s_i) > 0\}.
\end{equation*}
Note that if $s_{N+k} \in E_i$, then $0 < v(s_{N+k}-s_i) < \delta_k$ and $v(s_{N+k}-s_j) = 0$ for every $j\in\{1,\ldots,N\}\setminus\{i\}$. 

Now, at least one of the sets $E_i$, $i=1, \ldots, N$, must be infinite. Without loss of generality, assume that $E_1$ is infinite. We will prove that $E_1$ is pseudo-divergent with $\Br(E_1) = M$. Write $E_1 = \{s_{k_0}, s_{k_1}, s_{k_2}, \ldots\}$, where $s_{k_0}=s_1$ and $k_1 < k_2 < \cdots$. By \eqref{eq3}, for $k_j > k_i$ we have
\begin{equation*}
0 < v(s_{k_j}-s_1) < \delta_{k_j} < v(s_{k_i}-s_1) < \delta_{k_i}
\end{equation*}
which shows that $E_1$ is pseudo-divergent with pseudo-limit $s_1$ (see for example \cite[Remark 4.7]{ChabPolCloVal}). Moreover, $\Br(E_1)=M$ because the sequence $\{\delta_k\}_{k \in \N}$ decreases to 0.
\end{proof}

\section{Local Case}\label{Local section}
In this section, we show how Theorem \ref{Int(S,V) nontrivial V1} can be used to decide whether or not $\IntQ(S,\oZ)$ is trivial. Let $\PP$ be the set of all prime numbers. For $p \in \PP$, $v_p$ is the standard $p$-adic valuation, $\Z_{(p)}$ is the localization of $\Z$ at $p\Z$, $\Z_p$ denotes the ring of $p$-adic integers, $\Q_p$ is the field of $p$-adic numbers, and $\mathbf{F}_p$ is the finite field with $p$ elements. We let $\oQp$ be an algebraic closure of $\Q_p$, and let $\oZp$ be the absolute integral closure of $\oZp$. Similarly, $\overline{\Z_{(p)}}$ denotes the integral closure of $\Z_{(p)}$ in $\oQ$. Note that $\oZp$ is a rank one non-discrete valuation domain with residue field equal to an algebraic closure of $\mathbb F_p$.% The overline above $\Q_p,\Z_p$ is not to be confused with the polynomial closure. Since both symbols are equally customary, we use both of them; their meaning is clear from the context.

\begin{Def}\label{Sigma_p}
Let $p \in \PP$ and let $S \subseteq \overline{\Z_{(p)}}$. Let $\mathcal{P}(S)\subseteq\Z_{(p)}[X]$ be set of minimal polynomials over $\Q$ of all the elements of $S$. We define $\Sigma_p(S)$ to be the set of roots in $\oZp$ of the polynomials in $\mathcal{P}(S)$. %We also set $\overline{\Omega_p}$ to be the topological closure of $\Omega_p$ inside $\oQp$.
\end{Def}

\begin{Thm}\label{Big thm local case}
Let $S \subseteq \oZ$. The following are equivalent.
\begin{enumerate}[(1)]
\item $\IntQ(S,\oZ)$ is nontrivial.
\item There exists $p \in \PP$ such that $\IntQ(S, \oQZp)$ is nontrivial.
\item There exists $p \in \PP$ such that $\Int_{\Q_p}(\Sigma_p(S), \oZp)$ is nontrivial.
\item There exists $p \in \PP$ such that $\Int_{\oQp}(\Sigma_p(S), \oZp)$ is nontrivial.
\end{enumerate}
\end{Thm}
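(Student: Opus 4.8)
The strategy is to reduce everything to the valuation-domain criterion of Theorem~\ref{Int(S,V) nontrivial V1}, moving through the chain $(4)\Leftrightarrow(3)\Leftrightarrow(2)\Rightarrow(1)\Rightarrow(2)$. The key bookkeeping device is that an algebraic integer $\alpha$ and its Galois conjugates are ``the same'' as far as integrality of polynomial values with rational coefficients is concerned: for $f\in\Q[X]$, $f(\alpha)\in\oZ$ if and only if $f(\sigma\alpha)\in\oZ$ for every $\sigma\in\Gal(\oQ/\Q)$, since $\sigma f(\alpha)=f(\sigma\alpha)$. Likewise, whether $f(\alpha)\in\oZ$ can be checked one prime at a time: $\oZ=\bigcap_{p}\oQZp$ (intersection inside $\oQ$), where $\oQZp$ is the integral closure of $\Z_{(p)}$, so $f(S)\subseteq\oZ$ iff for every $p$, $f(S)\subseteq\oQZp$. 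These two observations are what let us pass back and forth between $S\subseteq\oZ$, its image in a $p$-adic completion, and the full conjugacy-closed set $\Sigma_p(S)$.

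\textbf{Steps (4)$\Leftrightarrow$(3).} Over $\oZp$ the value group is divisible and the residue field is algebraically closed; I would check directly that conditions (2)/(3) of Theorem~\ref{Int(S,V) nontrivial V1} are insensitive to whether the coefficient field is $\Q_p$ or $\oQp$, because the criterion ``there exists $b\in M$ with $S/bV$ finite'' refers only to $S$, $V$, and $M$, not to the coefficient field. Concretely, $\Int_{\oQp}(\Sigma_p(S),\oZp)$ is nontrivial iff $\Sigma_p(S)$ fails the covering-by-balls condition with respect to $v_p$ on $\oZp$, and the same holds for $\Int_{\Q_p}(\Sigma_p(S),\oZp)$; alternatively one argues that a polynomial in $\Q_p[X]$ witnessing nontriviality can be rescaled/factored over $\oQp$ and conversely a product of linear factors $(X-\alpha_i)/d$ over $\oZp$ with the $\alpha_i$ conjugate can be replaced by a power of an irreducible factor with coefficients in $\Q_p$. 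Here one uses that $\Sigma_p(S)$ is stable under $\Gal(\oQp/\Q_p)$.

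\textbf{Steps (2)$\Leftrightarrow$(3) and (2)$\Rightarrow$(1).} To connect $\IntQ(S,\oQZp)$ with $\Int_{\Q_p}(\Sigma_p(S),\oZp)$, I would use the embedding of $\oQZp$ into $\oZp$ (fixing an extension of $v_p$ to $\oQ$): an element $f\in\Q[X]$ satisfies $f(S)\subseteq\oQZp$ iff $f(\alpha)$ is integral at $v_p$ for every conjugate of every $s\in S$, i.e.\ iff $f(\Sigma_p(S))\subseteq\oZp$, using the conjugacy remark above together with the fact that $\oQZp\hookrightarrow\oZp$ reflects integrality. Triviality must also be matched: $\Z_{(p)}[X]=\oQZp\cap\Q[X]$ on one side and $\Q_p[X]\cap(\text{something})$ on the other — one checks that $f\in\Q[X]$ lies in $\Z_{(p)}[X]$ iff its coefficients are $v_p$-integral iff $f\in\Q_p[X]$ has $v_p$-integral coefficients, so triviality transfers. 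For (2)$\Rightarrow$(1): if $\IntQ(S,\oQZp)\supsetneq\Z_{(p)}[X]$, pick $f$ witnessing this and clear denominators prime to $p$ to get $f\in\Q[X]\setminus\Z[X]$ with $f(S)\subseteq\oQZp$; but $f$ automatically has $\ell$-integral coefficients for every $\ell\neq p$, and $\Z[X]$-membership fails only at $p$, while $f(S)\subseteq\oZ$ follows because $f(S)\subseteq\oQZq$ for all $q$ (trivially for $q\neq p$ since the coefficients are $q$-integral, and by hypothesis for $q=p$). Thus $f\in\IntQ(S,\oZ)\setminus\Z[X]$.

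\textbf{Steps (1)$\Rightarrow$(2).} Conversely, if $f\in\IntQ(S,\oZ)\setminus\Z[X]$, then some coefficient of $f$ has negative $v_p$-valuation for at least one prime $p$; for that $p$, $f\notin\Z_{(p)}[X]$ but $f(S)\subseteq\oZ\subseteq\oQZp$, so $\IntQ(S,\oQZp)$ is nontrivial. \textbf{The main obstacle} I anticipate is the careful matching of the ``trivial'' parts across the three incarnations — in particular verifying that passing to the conjugacy-closed set $\Sigma_p(S)$ and enlarging the coefficient field from $\Q_p$ to $\oQp$ does not spuriously create or destroy integer-valued polynomials outside the base polynomial ring — and making precise the claim that nontriviality of $\Int_{\oQp}(\Sigma_p(S),\oZp)$ descends to coefficients in $\Q_p$ via the Galois-stability of $\Sigma_p(S)$ (replacing a linear-factor witness over $\oZp$ by a $\Gal(\oQp/\Q_p)$-orbit product). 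Everything else is an application of Theorem~\ref{Int(S,V) nontrivial V1} together with the local-global and conjugacy principles above.
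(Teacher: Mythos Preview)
Your plan parallels the paper's proof closely: the paper also establishes (1)$\Leftrightarrow$(2), (2)$\Leftrightarrow$(3), and (3)$\Leftrightarrow$(4) via separate propositions. Your arguments for (1)$\Leftrightarrow$(2) (clear denominators prime to $p$; intersect over all primes) match Propositions~\ref{localization IntQSZ V1} and~\ref{1 iff 2}, and your identification $\IntQ(S,\oQZp)=\IntQ(\Sigma_p(S),\oZp)$ is the content of Proposition~\ref{IntAnpcompletion}.

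For (3)$\Leftrightarrow$(4) you take a different route. The paper invokes an external integral-closure result \cite[Lemma~2.20]{PerDedekind} to identify $\Int_{\oQp}(G_p(\Sigma_p(S)),\oZp)$ as the integral closure of $\Int_{\Q_p}(\Sigma_p(S),\oZp)$ (Proposition~\ref{nontriviality Galois conjugate}). You instead propose using Theorem~\ref{Int(S,V) nontrivial V1} directly together with the Galois-stability of $\Sigma_p(S)$, replacing a linear-factor witness over $\oZp$ by a $\Gal(\oQp/\Q_p)$-orbit product lying in $\Z_p[X]$. This works and is more self-contained. Be aware, though, that your initial claim---that Theorem~\ref{Int(S,V) nontrivial V1} applies verbatim with the coefficient field shrunk to $\Q_p$---is not immediate: the implication (4)$\Rightarrow$(1) in that theorem uses polynomials $(X-s_1)\cdots(X-s_m)/d$ with $s_i\in\Sigma_p(S)$ and $d\in\oMp$, which need not lie in $\Q_p[X]$. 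So you genuinely need the Galois-orbit construction, not merely the observation that the ball-covering criterion is field-independent.

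The one real gap is in (3)$\Rightarrow$(2). Your assertion that ``triviality transfers'' skips a substantive step: given a witness $f\in\Int_{\Q_p}(\Sigma_p(S),\oZp)\setminus\Z_p[X]$, you must produce one with coefficients in $\Q$, not just in $\Q_p$. The paper handles this in Proposition~\ref{2 iff 3} by writing $f=F/p^n$ with $F\in\Z_p[X]$, then approximating each coefficient of $F$ by an element of $\Z_{(p)}$ congruent to it modulo $p^n$; the resulting $g\in\Q[X]$ satisfies $g-f\in\Z_p[X]$, so $g\in\IntQ(\Sigma_p(S),\oZp)\setminus\Z_{(p)}[X]$. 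You flag the $\oQp\to\Q_p$ descent as the main obstacle but treat the $\Q_p\to\Q$ descent as automatic; it is not, and this density argument is exactly what is missing from your sketch.
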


The proof of Theorem \ref{Big thm local case} is quite long and requires numerous intermediate results. To keep things organized, we will prove the equivalences in the theorem one at a time. Before doing this, we mention the connection between Theorem \ref{Big thm local case} and our work from Section \ref{Val dom section}. Since $\oZp$ is a valuation domain with fraction field $\oQp$, combining Theorem \ref{Int(S,V) nontrivial V1} and Theorem \ref{Big thm local case} yields the following corollary.

\begin{Cor}\label{Big cor local case}
Let $S \subseteq \oZ$. Then, $\IntQ(S, \oZ)$ is nontrivial if and only if there exists $p \in \PP$ such that $\Sigma_p(S) \subseteq \oZp$ satisfies one of the equivalent conditions of Theorem \ref{Int(S,V) nontrivial V1}.
\end{Cor}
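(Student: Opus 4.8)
The plan is to read off the Corollary as a formal consequence of the two results already in hand, Theorem \ref{Big thm local case} and Theorem \ref{Int(S,V) nontrivial V1}; the only thing to check is that the latter applies to the valuation domain $\oZp$ and that its notion of triviality coincides with the one used in the former.

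First I would set the stage. As recalled just before Definition \ref{Sigma_p}, $\oZp$ is a (rank one, non-discrete) valuation domain whose fraction field is $\oQp$, so Theorem \ref{Int(S,V) nontrivial V1} applies verbatim with $V = \oZp$, $K = \oQp$ and with the relevant subset taken to be $\Sigma_p(S)$, which is contained in $\oZp$ by the very definition of $\Sigma_p(S)$ (indeed every element of $S \subseteq \oZ$ has a monic integer minimal polynomial, so every root of such a polynomial is an algebraic integer and hence lies in $\oZp$). Since $\oQp$ is the fraction field of $\oZp$, we have $\oZp \cap \oQp = \oZp$; hence, in the sense of Definition \ref{IVP notation}, the ring $\Int_{\oQp}(\Sigma_p(S),\oZp)$ is nontrivial precisely when $\oZp[X] \subsetneq \Int_{\oQp}(\Sigma_p(S),\oZp)$. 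This single statement is at once condition (1) of Theorem \ref{Int(S,V) nontrivial V1} applied to $\Sigma_p(S)$ and condition (4) of Theorem \ref{Big thm local case} applied to $S$.

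With these identifications the two equivalence chains splice together. By $(1)\Leftrightarrow(4)$ of Theorem \ref{Big thm local case}, $\IntQ(S,\oZ)$ is nontrivial if and only if there exists $p \in \PP$ for which $\Int_{\oQp}(\Sigma_p(S),\oZp)$ is nontrivial; by the previous paragraph this holds if and only if there exists $p \in \PP$ such that $\Sigma_p(S) \subseteq \oZp$ satisfies condition (1) of Theorem \ref{Int(S,V) nontrivial V1}; and by the equivalence of $(1)$--$(4)$ in Theorem \ref{Int(S,V) nontrivial V1}, that condition may be replaced by any one --- equivalently all --- of the four conditions listed there. This is exactly the assertion of the Corollary. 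The closest thing to an obstacle is purely bookkeeping: making sure that $(\oZp,\oQp)$ satisfies the running hypotheses of Section \ref{Val dom section} and that the two occurrences of ``nontrivial'' refer to the same condition --- both of which are immediate from the definitions --- so no further argument is needed.
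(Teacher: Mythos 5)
Your argument is correct and is exactly the paper's intended derivation: the paper simply states (in the sentence preceding the corollary) that combining Theorem \ref{Int(S,V) nontrivial V1} (applied with $V=\oZp$, $K=\oQp$) and the equivalence $(1)\Leftrightarrow(4)$ of Theorem \ref{Big thm local case} yields the result, and you spell out those same two steps together with the routine bookkeeping that the two notions of nontriviality coincide.
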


\subsection*{Equivalence of \ref{Big thm local case}(1) and \ref{Big thm local case}(2)}

First, we show that our rings of integral-valued polynomials are well-behaved with respect to localization at primes of $\Z$.

\begin{Lem}\label{AnpAn V1}
Let $p\in\PP$. Then, $\oQZp=\oZ_{(p)}$. Moreover, $(\mathcal{A}_{n})_{(p)}=\{\tfrac{\alpha}{t}\mid\alpha\in\mathcal{A}_n,t\in\Z\setminus p\Z\},$
%$\mathcal{A}_{n,p}=(\mathcal{A}_{n})_{(p)}=\{\tfrac{\alpha}{t}\mid\alpha\in\mathcal{A}_n,t\in\Z\setminus p\Z\},$
and so $\bigcap_{p\in\PP}(\mathcal{A}_{n})_{(p)}=\mathcal{A}_{n}$.
%$\bigcap_{p\in\PP}\mathcal{A}_{n,p}=\mathcal{A}_{n}$.
\end{Lem}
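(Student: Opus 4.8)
The plan is to deduce all three claims from two standard facts: $(\star)$ localization commutes with integral closure---if $A$ is a domain with fraction field $K$, $L/K$ a field extension, $B$ the integral closure of $A$ in $L$, and $T\subseteq A\setminus\{0\}$ a multiplicative set, then $T^{-1}B$ is the integral closure of $T^{-1}A$ in $L$, the only non-obvious inclusion being proved by clearing denominators in an integral dependence relation---and $(\star\star)$ $\bigcap_{p\in\PP}\Z_{(p)}=\Z$. For the first equality I apply $(\star)$ with $A=\Z$, $L=\oQ$, $B=\oZ$, and $T=\Z\setminus p\Z$: by definition $\oQZp$ is the integral closure of $\Z_{(p)}$ in $\oQ$, and $(\star)$ identifies it with $(\Z\setminus p\Z)^{-1}\oZ=\oZ_{(p)}$.

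For the description of $(\mathcal{A}_n)_{(p)}$---which I take to denote the set of $\beta\in\oQZp$ with $[\Q(\beta):\Q]\le n$---I argue by double inclusion. If $\alpha\in\mathcal{A}_n$ is a root of a monic $f\in\Z[X]$ of degree $d\le n$ and $t\in\Z\setminus p\Z$, then $t^{-d}f(tX)$ is monic with coefficients in $\Z_{(p)}$ (since $t^{-1}\in\Z_{(p)}$) and vanishes at $\alpha/t$, so $\alpha/t\in\oQZp$; moreover $\Q(\alpha/t)=\Q(\alpha)$ because $t\in\Q^{\times}$, so $\alpha/t$ has degree $\le n$. Conversely, if $\beta\in\oQZp$ has degree $\le n$, then by the first equality $\beta=\gamma/s$ with $\gamma\in\oZ$ and $s\in\Z\setminus p\Z$; again $\Q(\gamma)=\Q(\beta)$, so $\gamma\in\mathcal{A}_n$ and $\beta$ has the asserted form. (If one instead reads $(\mathcal{A}_n)_{(p)}$ as the elementwise localization of the set $\mathcal{A}_n$, this computation is exactly the assertion that the two readings coincide.)

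For the intersection, $\mathcal{A}_n\subseteq\bigcap_{p\in\PP}(\mathcal{A}_n)_{(p)}$ is clear (take $t=1$). Conversely, suppose $\beta$ lies in $(\mathcal{A}_n)_{(p)}$ for every $p$. Then $[\Q(\beta):\Q]\le n$, and $\beta$ is integral over $\Z_{(p)}$ for all $p$; since $\Z_{(p)}$ is integrally closed with fraction field $\Q$, the minimal polynomial of $\beta$ over $\Q$ lies in $\Z_{(p)}[X]$ for every $p$, hence in $\bigl(\bigcap_{p\in\PP}\Z_{(p)}\bigr)[X]=\Z[X]$ by $(\star\star)$; being monic over $\Z$, it exhibits $\beta$ as an algebraic integer of degree $\le n$, so $\beta\in\mathcal{A}_n$. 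I do not expect a genuine obstacle here: the whole argument is standard, and the only point needing a moment's care is fixing the meaning of $(\mathcal{A}_n)_{(p)}$ so that its intrinsic description and the elementwise localization of $\mathcal{A}_n$ are seen to agree---which is the substance of the middle assertion.
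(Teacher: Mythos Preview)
Your proof is correct and follows essentially the same approach as the paper, which simply cites the standard principle that integral closure commutes with localization (Atiyah--MacDonald, Proposition 5.12). You have spelled out the details---in particular the care about what $(\mathcal{A}_n)_{(p)}$ means for a set that is not a ring---that the paper leaves implicit.
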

\begin{proof}
This follows from the standard principle (see e.g.\ \cite[Proposition 5.12]{AtiyahMacDonald}) that integral closure and localization commute with one another.
\end{proof}

Since $\oZ=\bigcap_{p\in\PP}\oQZp$, we clearly have
\begin{equation}\label{globallocal}
\IntQ(S,\oZ)=\bigcap_{p\in\PP}\IntQ(S,\oQZp). 
\end{equation}
The next proposition shows that for each $p\in\PP$, $\Int(S,\oQZp)$ is the localization of $\Int(S,\oZ)$ at the multiplicative set $\Z\setminus p\Z$.

\begin{Prop}\label{localization IntQSZ V1}
Let $S\subseteq\oZ$ and let $p\in\PP$. Then, $\IntQ(S,\oZ)_{(p)}=\IntQ(S,\oQZp)$.
\end{Prop}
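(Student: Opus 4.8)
The plan is to establish the two inclusions separately, using throughout Lemma~\ref{AnpAn V1}, which tells us that $\oQZp=\oZ_{(p)}$ is the localization of $\oZ$ at the multiplicative set $\Z\setminus p\Z$; in particular every element of $\oQZp$ has the form $\alpha/t$ with $\alpha\in\oZ$ and $t\in\Z\setminus p\Z$. The inclusion $\IntQ(S,\oZ)_{(p)}\subseteq\IntQ(S,\oQZp)$ is immediate: a typical element of the left-hand side is $g/t$ with $g\in\IntQ(S,\oZ)$ and $t\in\Z\setminus p\Z$, and for every $s\in S$ we have $(g/t)(s)=g(s)/t\in\oZ_{(p)}$ since $g(s)\in\oZ$ and $1/t\in\Z_{(p)}$.

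For the reverse inclusion I would start from $f\in\IntQ(S,\oQZp)$, pick a positive integer $d$ with $df\in\Z[X]$, and write $d=p^a m$ with $a\ge 0$ and $p\nmid m$; then $g:=mf$ satisfies $p^a g=df\in\Z[X]$, so $g\in\Z[\tfrac1p][X]$. Since $f=g/m$ with $m\in\Z\setminus p\Z$, it suffices to show $g\in\IntQ(S,\oZ)$, i.e.\ that $g(s)\in\oZ$ for each $s\in S$. Fix such an $s$. Evaluating a polynomial with coefficients in $\Z[\tfrac1p]$ at the algebraic integer $s$ yields $g(s)\in\oZ[\tfrac1p]$, so $g(s)=\beta/p^k$ for some $\beta\in\oZ$ and $k\ge 0$; on the other hand $g(s)=m\,f(s)\in\oZ_{(p)}$, so also $g(s)=\gamma/t$ for some $\gamma\in\oZ$ and $t\in\Z\setminus p\Z$.

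To finish I would invoke the standard fact that $x\in\oQ$ lies in $\oZ$ if and only if $w(x)\ge 0$ for every valuation $w$ of $\oQ$ whose restriction to $\Q$ is (a multiple of) a $p$-adic valuation $v_q$, $q\in\PP$. For $q\ne p$ we have $w(p)=0$, so the first representation gives $w(g(s))=w(\beta)\ge 0$. For $q=p$, the equality $\beta t=p^k\gamma$ together with $w(t)=0$ (because $p\nmid t$ and $t\in\Z$) gives $w(\beta)=k\,w(p)+w(\gamma)$, whence $w(g(s))=w(\beta)-k\,w(p)=w(\gamma)\ge 0$. Hence $g(s)\in\oZ$, so $g\in\IntQ(S,\oZ)$ and $f\in\IntQ(S,\oZ)_{(p)}$. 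I expect this last reconciliation of the two descriptions of $g(s)$ to be the only real obstacle: because the polynomials in $\IntQ(S,\oZ)$ have coefficients in $\Q$ rather than in the fraction field $\oQ$ of $\oZ$, the localization $\IntQ(S,\oZ)_{(p)}$ is taken at a multiplicative subset of $\Z$, and one must verify that clearing a prime-to-$p$ denominator from $f$ produces values lying in $\oZ$ itself, not merely in $\oZ[\tfrac1p]\cap\oZ_{(p)}$.
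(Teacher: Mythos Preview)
Your proof is correct and follows essentially the same route as the paper's: both clear a prime-to-$p$ denominator to obtain $g=mf\in\Z[\tfrac1p][X]\cap\IntQ(S,\oQZp)$ and then show $g\in\IntQ(S,\oZ)$ by checking integrality at every prime. The only cosmetic difference is that the paper argues at the level of polynomial rings via the identity $\IntQ(S,\oZ)=\bigcap_{q\in\PP}\IntQ(S,\overline{\Z_{(q)}})$ (noting $g\in\Z_{(q)}[X]$ for $q\neq p$), whereas you evaluate at each $s\in S$ and verify $g(s)\in\oZ$ valuation by valuation---this is just the pointwise unpacking of the same intersection $\oZ=\bigcap_{q}\overline{\Z_{(q)}}$.
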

\begin{proof}
First, let $f\in \IntQ(S,\oZ)$ and $n\in\Z\setminus p\Z$. Then, for each $s\in S$, we have that $\frac{f(s)}{n}$ is integral over $\Z_{(p)}$ by Lemma \ref{AnpAn V1}. Thus, $\IntQ(S,\oZ)_{(p)} \subseteq \IntQ(S,\oQZp)$.

The proof of the other containment is similar to the proof of \cite[(7)]{ChabPerAdelic}. Let $f\in \IntQ(S,\oQZp)$. There exists $d\in\Z,d\not=0$ such that  $df(X)\in\Z[X]$. Suppose that $d=p^a t$, for some $a\geq0$ and $t\in\Z$ not divisible by $p$. Let $g(X)=t f(X)$. Then $g\in \IntQ(S,\oQZp)$. Moreover, for each prime $q\not=p$, $g\in \Z_{(q)}[X]\subseteq\IntQ(S,\overline{\Z_{(q)}})$. Now, we have
\begin{equation*}
g\in\bigcap_{p\in\PP}\IntQ(S,\oQZp)=\IntQ(S,\oZ)
\end{equation*}
so that $f=\frac{g}{t}$ is in $\IntQ(S,\oZ)_{(p)}$ because $t$ is invertible in $\Z_{(p)}$.
\end{proof}

\begin{Prop}\label{1 iff 2}
Let $S\subseteq\oZ$. Then $\IntQ(S,\oZ)$ is trivial if and only if, for each $p\in\PP$, $\IntQ(S,\oQZp)$ is trivial.
\end{Prop}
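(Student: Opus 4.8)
The plan is to reduce everything to the identity $\IntQ(S,\oZ)_{(p)}=\IntQ(S,\oQZp)$ from Proposition \ref{localization IntQSZ V1} and the global–local decomposition \eqref{globallocal}, together with the bookkeeping about what ``trivial'' means for each ring. First I would record the two relevant constant rings: since $\Z$ is integrally closed in $\Q$ we have $\oZ\cap\Q=\Z$, and since $\Z_{(p)}$ is integrally closed in $\Q$ we have $\oQZp\cap\Q=\Z_{(p)}$ (using $\oQZp=\overline{\Z_{(p)}}$ from Lemma \ref{AnpAn V1}). Thus, by Definition \ref{IVP notation}, $\IntQ(S,\oZ)$ is trivial means $\IntQ(S,\oZ)=\Z[X]$, and $\IntQ(S,\oQZp)$ is trivial means $\IntQ(S,\oQZp)=\Z_{(p)}[X]$.

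For the forward direction, suppose $\IntQ(S,\oZ)=\Z[X]$. Localizing at the multiplicative set $\Z\setminus p\Z$ and applying Proposition \ref{localization IntQSZ V1} gives
\begin{equation*}
\IntQ(S,\oQZp)=\IntQ(S,\oZ)_{(p)}=\Z[X]_{(p)}=\Z_{(p)}[X],
\end{equation*}
where the last equality is the standard fact that localization commutes with adjoining an indeterminate. Hence $\IntQ(S,\oQZp)$ is trivial for every $p\in\PP$.

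For the converse, assume $\IntQ(S,\oQZp)=\Z_{(p)}[X]$ for every prime $p$. By \eqref{globallocal},
\begin{equation*}
\IntQ(S,\oZ)=\bigcap_{p\in\PP}\IntQ(S,\oQZp)=\bigcap_{p\in\PP}\Z_{(p)}[X].
\end{equation*}
A polynomial $f\in\Q[X]$ lies in $\Z_{(p)}[X]$ for every $p$ if and only if each of its coefficients lies in $\bigcap_{p\in\PP}\Z_{(p)}=\Z$; therefore $\bigcap_{p\in\PP}\Z_{(p)}[X]=\Z[X]$, and $\IntQ(S,\oZ)=\Z[X]$ is trivial.

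There is no serious obstacle here; the argument is entirely formal once Proposition \ref{localization IntQSZ V1} and \eqref{globallocal} are in hand. The only points requiring a moment of care are the identifications of the ``constant'' subrings $\oZ\cap\Q=\Z$ and $\oQZp\cap\Q=\Z_{(p)}$ so that the word ``trivial'' is being applied correctly in each ring, and the two elementary facts $\Z[X]_{(p)}=\Z_{(p)}[X]$ and $\bigcap_p\Z_{(p)}=\Z$.
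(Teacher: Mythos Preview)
Your proof is correct and follows essentially the same route as the paper: one direction uses Proposition \ref{localization IntQSZ V1} to localize $\IntQ(S,\oZ)=\Z[X]$ at each prime, and the other uses \eqref{globallocal} together with $\bigcap_{p}\Z_{(p)}[X]=\Z[X]$. The extra care you take in identifying $\oZ\cap\Q=\Z$ and $\oQZp\cap\Q=\Z_{(p)}$ is a nice touch that the paper leaves implicit.
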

\begin{proof}
The `if' direction follows by \eqref{globallocal} and the fact that $\Z[X]=\bigcap_{p\in\PP}\Z_{(p)}[X]$. Conversely, suppose that $\IntQ(S,\oZ)=\Z[X]$. Localizing at any $p\in\PP$, by Proposition \ref{localization IntQSZ V1} we get $\IntQ(S,\oZ)_{(p)}=\Z_{(p)}[X]=\IntQ(S,\oQZp)$.
\end{proof}

\subsection*{Equivalence of \ref{Big thm local case}(2) and \ref{Big thm local case}(3)}
\begin{Prop}\label{IntAnpcompletion}
Let $p \in \PP$ and let $S \subseteq \oQZp$. Then, $\IntQ(S,\oQZp) = \IntQ(\Sigma_p(S),\oZp)$. %$$\IntQ(\Omega,\oQZp)=\IntQ(\Omega_p,\oZp)=\IntQ(\overline{\Omega_p},\oZp).$$
\end{Prop}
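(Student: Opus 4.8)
The plan is to reduce the claimed identity of polynomial rings to a statement about one element of $S$ at a time, and then to translate that statement through the classical correspondence, for a number field $L$, between the extensions of $v_p$ to $L$ and the $\Q$-embeddings $L \hookrightarrow \oQp$. For $s \in S$, write $\mu_s \in \Z_{(p)}[X]$ for its monic minimal polynomial over $\Q$ and $R_s \subseteq \oZp$ for the set of roots of $\mu_s$ in $\oQp$ (these lie in $\oZp$ since $\mu_s$ is monic over $\Z_{(p)}$), so that $\Sigma_p(S) = \bigcup_{s \in S} R_s$. Because
\begin{align*}
\IntQ(S,\oQZp) &= \bigcap_{s \in S}\{f \in \Q[X] \mid f(s) \in \oQZp\},\\
\IntQ(\Sigma_p(S),\oZp) &= \bigcap_{s \in S}\{f \in \Q[X] \mid f(R_s) \subseteq \oZp\},
\end{align*}
it suffices to prove that for every $f \in \Q[X]$ and $s \in S$ one has $f(s) \in \oQZp$ if and only if $f(t) \in \oZp$ for all $t \in R_s$.

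So fix $f$ and $s$, set $L := \Q(s)$, and describe each side valuation-theoretically. For the left side: $f(s) \in L$, and by Lemma \ref{AnpAn V1} $\oQZp$ is the integral closure of $\Z_{(p)}$ in $\oQ$, so $f(s) \in \oQZp$ iff $f(s)$ lies in the integral closure of $\Z_{(p)}$ in $L$; since $\Z_{(p)}$ is a DVR, that integral closure equals $\bigcap_{w} V_w$, the intersection over the finitely many extensions $w$ of $v_p$ to $L$ of the associated valuation rings. Hence $f(s) \in \oQZp$ iff $w(f(s)) \ge 0$ for every extension $w$ of $v_p$ to $L$. For the right side: let $\bar v_p$ be the unique extension of $v_p$ to $\oQp$, whose valuation ring is $\oZp$. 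Each root $t \in R_s$ gives a $\Q$-embedding $\sigma_t \colon L \hookrightarrow \oQp$ with $\sigma_t(s) = t$, and since $f$ has rational coefficients, $f(t) = \sigma_t(f(s))$, so $f(t) \in \oZp$ iff $w_t(f(s)) \ge 0$, where $w_t := \bar v_p \circ \sigma_t$ is an extension of $v_p$ to $L$.

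The key point, and the step I would be most careful with, is that $\{w_t \mid t \in R_s\}$ is precisely the set of all extensions of $v_p$ to $L$ (repetitions do not matter for a universal quantifier): each $w_t$ is such an extension by construction, and conversely, given an extension $w$, its completion $L_w$ is a finite extension of $\Q_p$ which embeds $\Q_p$-linearly into $\oQp$ by the isomorphism extension theorem; restricting that embedding to $L$ yields a $\Q$-embedding $\sigma$, necessarily of the form $\sigma_t$ with $t = \sigma(s) \in R_s$, and $\bar v_p \circ \sigma = w$ because the extension of $v_p$ to the complete field $L_w$ is unique. (In the usual bookkeeping, $R_s$ is the set of $\Q$-embeddings $L \hookrightarrow \oQp$, which partitions into $\Gal(\oQp/\Q_p)$-orbits matching the primes of $L$ above $p$, and $w_t$ depends only on the orbit of $t$.) Combining the three descriptions,
\begin{align*}
f(s) \in \oQZp
&\iff w(f(s)) \ge 0 \text{ for all extensions } w \text{ of } v_p \text{ to } L\\
&\iff w_t(f(s)) \ge 0 \text{ for all } t \in R_s\\
&\iff f(t) \in \oZp \text{ for all } t \in R_s,
\end{align*}
which proves the single-element equivalence; intersecting over $s \in S$ then gives $\IntQ(S,\oQZp) = \IntQ(\Sigma_p(S),\oZp)$. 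Apart from that surjectivity claim, the argument is essentially bookkeeping: the fact that integral closure commutes with localization (Lemma \ref{AnpAn V1}), the description of the integral closure of a DVR in a finite extension as an intersection of valuation rings, and the triviality that $\sigma_t$ fixes the coefficients of $f$.
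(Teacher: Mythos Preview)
Your proof is correct and follows essentially the same route as the paper's: both arguments rest on the classical correspondence between extensions of $v_p$ to $\Q(s)$ and $\Q$-embeddings $\Q(s)\hookrightarrow\oQp$, and on the identity $\oQZp\cap\Q(s)=\bigcap_{P\mid p}O_{\Q(s),P}$. Your organization is slightly tidier---reducing to a single $s$ first and phrasing everything uniformly through valuations---whereas the paper treats the two inclusions separately and, in the forward direction, passes through a $\Q$-conjugate $\beta'$ of $\beta\in S$ before embedding into $\oQp$; but the substance is the same.
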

%Note that for $n=1$ we have $\Int(\Z_{(p)})=\Int_{\Q}(\Z_p)$, which is well-known.
\begin{proof}
First, assume $f \in \IntQ(S,\oQZp)$ and let $\alpha \in \Sigma_p(S)$. Then, by definition of the set $\Sigma_p(S)$, there exists $\beta \in S$ such that $\alpha$ and $\beta$ have the same minimal polynomial over $\Q$. In particular, $\alpha = \sigma(\beta')$, where $\beta'\in\oQ$ is a conjugate of $\beta$ over $\Q$ and $\sigma:\Q(\beta') \to \Q_p(\alpha)$ is a $\Q$-embedding (which corresponds to some prime ideal $P$ of $O_{\Q(\beta')}$ above $p$). By assumption, $f(\beta)$ is integral over $\Z_{(p)}$; but, $f$ is also integral-valued on $\beta'$, even if $\beta'$ might not lie in $S$. Hence, $f(\alpha) = \sigma(f(\beta')) \in O_{\Q_p(\alpha)} \subseteq \oZp$.

Conversely, suppose $f \in \IntQ(\Sigma_p(S), \oZp)$ and let $\beta \in S$. For each $\Q$-embedding $\sigma: \Q(\beta) \to \oQp$, we have $\sigma(\beta) \in \Sigma_p(S)$. Thus, $f(\sigma(\beta)) \in O_{\Q_p(\sigma(\beta))}$. Denote the localization of $O_{\Q(\beta)}$ at a prime $P$ by $O_{\Q(\beta), P}$. Then,
\begin{equation*}
f(\beta) \in \bigcap_{P|p} O_{\Q(\beta), P} = O_{\Q(\beta),(p)}.
\end{equation*}
Since $O_{\Q(\beta),(p)} \subseteq \overline{\Z_{(p)}}$, we obtain the desired result.
\end{proof}

%We show first that
%$$\IntQ(\Omega,\overline{\Z_{(p)}})=\IntQ(\Omega_p,\oZp)$$
%Indeed, if $f\in \IntQ(\Omega,\overline{\Z_{(p)}})$ and $\alpha\in\Omega_p$, then $\alpha$ is the root in $\oZp$ of some minimal polynomial $p\in\Q[X]$ of some $\beta\in\Omega$. By assumption, $f(\beta)$ is integral over $\Z_{(p)}$. Moreover, $\alpha=\sigma(\beta')$,  where $\beta'$ is a conjugate of $\beta$ over $\Q$ and  $\sigma:\Q(\beta')\to\Q_p(\alpha)$  is a $\Q$-embedding (which corresponds to some prime ideal $P$ of $O_{\Q(\beta')}$ which lies above $p$). Note that $f(X)$ is integral-valued also over $\beta'$, even though $\beta'$ may not belong to $\Omega$. Hence, $f(\alpha)=\sigma(f(\beta'))\in O_{\Q_p(\alpha)}$. This shows that $f\in\IntQ(\overline{\Omega_p},\Lambda_{n}(\Z_p))$. % \color{red} ricordare questa identita'  prima della propositione??\normalcolor

%Conversely, suppose $f\in \IntQ(\Omega_p,\oZp)$ and let $\beta\in\Omega$. For each $\Q$-embedding $\sigma:\Q(\beta)\to\overline{\Q_p}$, we have $\sigma(\beta)\in\Omega_p$, so $f(\sigma(\beta))\in O_{\Q_p(\sigma(\beta))}\subset\overline{\Z_p}$, so $f(\beta)\in \bigcap_{P|p}O_{\Q(\beta),P}=O_{\Q(\beta),(p)}\subset\overline{\Z_{(p)}}$.

\begin{Prop}\label{2 iff 3}
Let $p \in \PP$ and $S \subseteq \oQZp$. Then $\IntQ(S,\oQZp)$ is nontrivial if and only if $\Int_{\Q_p}(\Sigma_p(S),\oZp)$ is nontrivial. 
\end{Prop}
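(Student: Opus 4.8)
The plan is to combine the two results that immediately precede this statement. Proposition \ref{IntAnpcompletion} gives the ring equality $\IntQ(S,\oQZp) = \IntQ(\Sigma_p(S),\oZp)$, so the question of nontriviality of $\IntQ(S,\oQZp)$ is exactly the question of whether $\IntQ(\Sigma_p(S),\oZp)$ strictly contains its trivial part. Meanwhile Proposition \ref{localization IntQSZ V1} (applied with $S$ replaced by $\Sigma_p(S)$, or rather its analogue over the base $\Z_{(p)}$) together with the relationship between $\oZp$ and its completion-free valuation ring should let me pass from the coefficient field $\Q$ to the coefficient field $\Q_p$. So the real content is: $\IntQ(\Sigma_p(S),\oZp)$ is nontrivial over $\Q$ if and only if $\Int_{\Q_p}(\Sigma_p(S),\oZp)$ is nontrivial over $\Q_p$. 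I would phrase the argument around comparing the $\Q$-span and the $\Q_p$-span of the integer-valued polynomials.

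The key steps, in order, would be: (i) rewrite $\IntQ(S,\oQZp) = \IntQ(\Sigma_p(S),\oZp)$ using Proposition \ref{IntAnpcompletion}, reducing to a statement purely about the set $\Sigma_p(S)\subseteq\oZp$; (ii) observe that $\oZp\cap\Q = \Z_{(p)}$, so the trivial part of $\IntQ(\Sigma_p(S),\oZp)$ is $\Z_{(p)}[X]$ while the trivial part of $\Int_{\Q_p}(\Sigma_p(S),\oZp)$ is $\Z_p[X]$ (here $\oZp\cap\Q_p = \Z_p$); (iii) for the forward direction, take $f\in\IntQ(\Sigma_p(S),\oZp)\setminus\Z_{(p)}[X]$; since $f\in\Q[X]\subseteq\Q_p[X]$ and $f$ is integer-valued on $\Sigma_p(S)$, it lies in $\Int_{\Q_p}(\Sigma_p(S),\oZp)$, and it cannot lie in $\Z_p[X]$ because a polynomial with rational coefficients lying in $\Z_p[X]$ must already have coefficients in $\Q\cap\Z_p = \Z_{(p)}$, contradicting the choice of $f$; (iv) for the reverse direction, take $g\in\Int_{\Q_p}(\Sigma_p(S),\oZp)\setminus\Z_p[X]$ and approximate it by a polynomial with $\Q$-coefficients: write $g = \sum c_i X^i$ with $c_i\in\Q_p$, choose $a_i\in\Q$ close enough $p$-adically to $c_i$ that $a_i - c_i \in p^N\Z_p$ for $N$ large, and check that $f := \sum a_i X^i$ still satisfies $f(\Sigma_p(S))\subseteq\oZp$ (using that $\Sigma_p(S)\subseteq\oZp$ and the valuation is non-negative on it, so perturbing the coefficients by elements of $p^N\oZp$ changes values by elements of $p^N\oZp\subseteq\oZp$) while $f\notin\Z_{(p)}[X]$ for $N$ large (since $g\notin\Z_p[X]$, some $c_i\notin\Z_p$, hence $a_i\notin\Z_p$ for good approximations, hence $a_i\notin\Z_{(p)}$).

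I expect the main obstacle to be step (iv): making the approximation argument clean. One has to be careful that approximating the coefficients does not accidentally make the new polynomial fail to be integer-valued, and that it genuinely remains nontrivial. The cleanest formulation is probably to fix $N$ with $p^{-N}$ smaller than the (finitely many) relevant thresholds — namely large enough that $p^N\oZp\subseteq\oZp$ (automatic) and that some coefficient that was not in $\Z_p$ stays out of $\Z_p$ after perturbation by $p^N\Z_p$ — and then invoke density of $\Q$ in $\Q_p$. An alternative that sidesteps the perturbation entirely would be to appeal to a localization statement: $\Int_{\Q_p}(\Sigma_p(S),\oZp)$ and $\IntQ(\Sigma_p(S),\oZp)$ become equal after tensoring up the coefficients appropriately, i.e.\ $\Int_{\Q_p}(\Sigma_p(S),\oZp) = \Q_p\otimes_{\Q}\IntQ(\Sigma_p(S),\oZp)$ in a suitable sense (this is analogous to Proposition \ref{localization IntQSZ V1}), from which the equivalence of (non)triviality is formal; but I suspect the hands-on approximation argument is shorter to write out and I would go with that.
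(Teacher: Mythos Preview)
Your proposal is correct and follows essentially the same route as the paper: reduce via Proposition~\ref{IntAnpcompletion} to the set $\Sigma_p(S)\subseteq\oZp$, handle the forward direction by the observation $\Q\cap\Z_p=\Z_{(p)}$, and handle the converse by a $p$-adic approximation of coefficients. The only cosmetic difference is that the paper first clears denominators, writing $f=F/p^n$ with $F\in\Z_p[X]$, and then approximates $F$ coefficientwise in $\Z_{(p)}$ to precision $p^n$; this makes the bookkeeping in your step~(iv) slightly tidier (one sees immediately that $g-f\in\Z_p[X]$, whence $g\notin\Z_{(p)}[X]$ and $g(s)-f(s)\in\oZp$), but is the same argument you outline.
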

\begin{proof}
Note that by Proposition \ref{IntAnpcompletion},  we have $\IntQ(S,\oQZp)=\IntQ(\Sigma_p(S),\oZp)$ so that
\begin{equation*}
\IntQ(S,\oQZp)=\Int_{\Q_p}(\Sigma_p(S),\oZp)\cap\Q[X].
\end{equation*}
Now, $\oZp \cap \Q_p = \Z_p$ and $\Z_p \cap \Q = \Z_{(p)}$. So, if $\IntQ(S,\oQZp)$ is nontrivial, then $\Int_{\Q_p}(\Sigma_p(S),\oZp)$ is nontrivial.

Conversely, suppose that there exists $f\in\Int_{\Q_p}(\Sigma_p(S),\oZp)\setminus\Z_p[X]$. Since $\Q_p=\Z_p[\frac{1}{p}]$, we may write
\begin{equation*}
f(X)=\frac{F(X)}{p^n}=\frac{\sum_{i=0}^d \alpha_i X^i}{p^n}
\end{equation*}
where $F(X)=\sum_i\alpha_iX^i\in\Z_p[X]$ and $n\geq1$. For each $i=0, \ldots, d$, choose $a_i \in \Z_{(p)}$ such that $v_p(a_i - \alpha_i) \geq n$. We consider the polynomial
\begin{equation*}
g(X)=\frac{G(X)}{p^n}=\frac{\sum_{i=0}^d a_i X^i}{p^n}.
\end{equation*}
By construction, $g-f \in \Z_p[X]$, so $g \in \Q[X]\setminus \Z_{(p)}[X]$. We will show that $g\in \IntQ(\Sigma_p(S),\oZp)$. Let $s\in \Sigma_p(S)$. Then,
\begin{equation*}
g(s)=\frac{G(s)-F(s)+F(s)}{p^n}=\frac{G(s)-F(s)}{p^n}+f(s)
\end{equation*}
For each $0\leq i \leq d$, we have $v_p(a_i-\alpha_i)+iv(s)\geq n$, so  $\frac{G(s)-F(s)}{p^n}$ has non-negative valuation. Thus, $g(s) \in \oZ_p$.
\end{proof}

\subsection*{Equivalence of \ref{Big thm local case}(3) and \ref{Big thm local case}(4)}
For a prime $p$, we denote by $G_p$ the absolute Galois group of $\Q_p$, that is, $G_p=\Gal(\oQp/\Q_p)$. For a subset $S$ of $\oZp$ we set
\begin{equation*}
G_p(S)=\{\sigma(s)\mid s\in S,\sigma\in G_p\},
\end{equation*}
which is a subset of $\oZp$ consisting of the union of all the conjugates of the elements in $S$. Note that we have the equality $\Int_{\Q_p}(S,\oZp)=\Int_{\Q_p}(G_p(S),\oZp)$.

\begin{Prop}\label{nontriviality Galois conjugate}
Let $p \in \PP$ and let $S\subseteq\oZp$. The following are equivalent.
\begin{enumerate}[(1)]
\item $\Int_{\Q_p}(S,\oZp)$ is nontrivial.%iii
\item $\Int_{\oQp}(G_p(S),\oZp)$ is nontrivial.%ii
\item $\Int_{\oQp}(S,\oZp)$ is nontrivial.%i
\end{enumerate}
\end{Prop}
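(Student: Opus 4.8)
The plan is to establish the cycle of implications (1) $\Rightarrow$ (2) $\Rightarrow$ (3) $\Rightarrow$ (1). Throughout I would use two facts. First, the identity $\Int_{\Q_p}(S,\oZp) = \Int_{\Q_p}(G_p(S),\oZp)$ recorded just before the statement, which holds because every $\sigma \in G_p$ preserves the unique extension of $v_p$ to $\oQp$, hence fixes $\oZp$ setwise and permutes the values of a polynomial with $\Q_p$-coefficients. Second, the elementary observation that $\oZp \cap \Q_p = \Z_p$ (used already in the proof of Proposition \ref{2 iff 3}), so that any coefficient lying in $\Q_p \setminus \Z_p$ automatically lies in $\oQp \setminus \oZp$. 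Recall also that ``trivial'' means equality with $\Z_p[X]$ for the $\Q_p$-coefficient rings and with $\oZp[X]$ for the $\oQp$-coefficient rings, by Definition \ref{IVP notation}.

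The implications (1) $\Rightarrow$ (2) and (2) $\Rightarrow$ (3) are immediate. For the first: a witness $f \in \Int_{\Q_p}(S,\oZp) \setminus \Z_p[X]$ also lies in $\Int_{\Q_p}(G_p(S),\oZp) \subseteq \Int_{\oQp}(G_p(S),\oZp)$ and is still not in $\oZp[X]$, since some coefficient of $f$ lies in $\Q_p \setminus \Z_p$. For the second: $S \subseteq G_p(S)$ gives $\Int_{\oQp}(G_p(S),\oZp) \subseteq \Int_{\oQp}(S,\oZp)$, so nontriviality of the smaller ring forces nontriviality of the larger.

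The substantive step is (3) $\Rightarrow$ (1). Here I would first apply Theorem \ref{Int(S,V) nontrivial V1} to the valuation domain $V = \oZp$ with fraction field $\oQp$: nontriviality of $\Int_{\oQp}(S,\oZp)$ yields a finite subset $T = \{t_1,\ldots,t_m\} \subseteq S$ and a positive $\delta$ in the value group such that every $s \in S$ satisfies $v_p(s - t_i) \geq \delta$ for some $i$ (if the theorem returns $\delta = \infty$, then $S = T$ is finite and any positive value of $\delta$ works). For each $i$ let $\mu_i \in \Z_p[X]$ be the monic minimal polynomial of $t_i$ over $\Q_p$, which has $\Z_p$-coefficients because $t_i$ is integral over $\Z_p$, and set $\mu = \mu_1 \cdots \mu_m$. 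Writing $\mu_i(s)$ as the product of $s$ minus the conjugates of $t_i$ over $\Q_p$ (all of which lie in $\oZp$) and isolating the factor $s - t_i$, one obtains $v_p(\mu(s)) \geq \delta$ for every $s \in S$. Choosing a positive integer $k$ with $k\delta \geq 1$, the polynomial $g = \mu^k/p$ then lies in $\Q_p[X] \setminus \Z_p[X]$ (it is $1/p$ times a monic polynomial) and satisfies $g(S) \subseteq \oZp$, so $\Int_{\Q_p}(S,\oZp)$ is nontrivial.

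The main obstacle is precisely this last implication: the tempting shortcut of forming a product of Galois conjugates $\prod_{\sigma} \sigma(f)$ of a witness $f \in \Int_{\oQp}(S,\oZp) \setminus \oZp[X]$ to descend its coefficients to $\Q_p$ fails, because the conjugates of a point of $S$ need not lie in $S$ but only in $G_p(S)$, so the resulting polynomial need not be integral-valued on $S$. Routing through the ``$S$ is covered by finitely many balls $B(t_i,\delta)$ with centers $t_i \in S$'' form of Theorem \ref{Int(S,V) nontrivial V1} avoids this, since that condition refers only to $S$ itself, and the minimal polynomials of the centers (which lie in $S$) supply the descent to $\Q_p$ directly. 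Minor points to check are the degenerate case where $S$ is finite (all three rings are then nontrivial, e.g.\ via $\mu_1 \cdots \mu_m/p$) and the fact that the value group of $\oZp$ is Archimedean, so that a suitable exponent $k$ exists.
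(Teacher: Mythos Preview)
Your proof is correct, and the cycle (1) $\Rightarrow$ (2) $\Rightarrow$ (3) $\Rightarrow$ (1) closes as you describe. The argument differs from the paper's in two places.

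For (1) $\Leftrightarrow$ (2) the paper invokes an external result (\cite[Lemma 2.20]{PerDedekind}) identifying $\Int_{\oQp}(G_p(S),\oZp)$ as the integral closure of $\Int_{\Q_p}(S,\oZp)$ in $\oQp(X)$, which gives both directions at once. Your (1) $\Rightarrow$ (2) is the direct elementary observation that a witness over $\Q_p$ is already a witness over $\oQp$; you never need (2) $\Rightarrow$ (1) separately. For the hard step, the paper proves (3) $\Rightarrow$ (2): starting from the finite ball-cover $\{B(t_i,\delta)\}$ of $S$ furnished by Theorem \ref{Int(S,V) nontrivial V1}, it uses the Henselian property $v(\sigma(\alpha))=v(\alpha)$ to transport the cover to $G_p(S)$ via the finite set $G_p(T)$, and then reapplies Theorem \ref{Int(S,V) nontrivial V1}. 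You instead prove (3) $\Rightarrow$ (1) by taking the product $\mu$ of the minimal polynomials over $\Q_p$ of the centers $t_i$ and exhibiting $\mu^k/p$ as an explicit witness in $\Int_{\Q_p}(S,\oZp)\setminus\Z_p[X]$. Your route is more constructive and entirely self-contained within the paper (no appeal to \cite{PerDedekind}); the paper's route yields the extra structural information that the $\oQp$-ring is the integral closure of the $\Q_p$-ring, and its Galois-transport step is a clean template for similar descent arguments elsewhere.
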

\begin{proof}
(1) $\Leftrightarrow$ (2) By \cite[Lemma 2.20]{PerDedekind}, the integral closure of $\Int_{\Q_p}(S,\oZp)=\Int_{\Q_p}(G_p(S),\oZp)$ in $\oQp(X)$ is equal to $\Int_{\oQp}(G_p(S),\oZp)$. In particular, $\Int_{\Q_p}(S,\oZp)$ strictly contains $\Z_p[X]$ if and only if $\Int_{\oQp}(G_p(S),\oZp)$  strictly contains $\oZp[X]$. 

(2) $\Rightarrow$ (3) Since $S\subseteq G_p(S)$, we have $\Int_{\oQp}(G_p(S),\oZp) \subseteq \Int_{\oQp}(S,\oZp)$. So, (2) implies (3).

(3) $\Rightarrow$ (2) Assume $\Int_{\oQp}(S,\oZp)$ is nontrivial. By Theorem \ref{Int(S,V) nontrivial V1}, there exists a finite set $T \subseteq S$ and $\delta > 0$ such that for each $s \in S$, there exists $t \in T$ with $v(s-t) \geq \delta$. Note that since $T$ is finite, so is the set $G_p(T)$ of all images of $T$ under $G_p$. Let $\sigma(s) \in G_p(S)$, where $\sigma \in G_p$ and $s \in S$. Then, there exists $t \in T$ with $v(s-t) \geq \delta$. Since $\oQ_p$ is Henselian, we have $v(\sigma(\alpha)) = v(\alpha)$ for all $\alpha \in \oQ_p$. So,
\begin{equation*}
v(\sigma(s) - \sigma(t)) = v(\sigma(s-t)) = v(s-t) \geq \delta.
\end{equation*}
Thus, condition (2) of Theorem \ref{Int(S,V) nontrivial V1} holds for $G_p(S)$, and we conclude that $\Int_{\oQ_p}(G_p(S),\oZp)$ is nontrivial.
\end{proof}

%%%%%%%%%%%%%%%%%%%%%%%%%%%
%%%%%%%%%%%%%%%%%%%%%%%%%%%
%%%%%%%%%%%%%%%%%%%%%%%%%%%
\section{Global Conditions and Examples}\label{Global section}

Here, we examine global conditions that can be used to decide when $\IntQ(S,\oZ)$ is nontrivial. Our first theorem of this type relates $\IntQ(S,\oZ)$ to some distinguished polynomials that lie in the rings $\IntQ(\An)$.

\begin{Def}\label{BCL poly} For each $p \in \PP$ and each positive integer $n \geq 1$, let \begin{equation*}
\Psi_{p,n}(X) = (X^{p^n}-X)(X^{p^{n-1}}-X) \cdots (X^p-X).
\end{equation*}
\end{Def}

Recall that when $\alpha \in \oZ$, the index of $\alpha$ is $\iota_\alpha = [O_{\Q(\alpha)} : \Z[\alpha]]$. The lemma below summarizes the basic relationships among $\Psi_{p,n}$, $\iota_\alpha$, and integral-valued polynomials.

\begin{Lem}\label{Psi and index}%\mbox{}
Let  $n\in\N$ and $p\in\PP$.
\begin{enumerate}[(1)]%[label=(\arabic*)]
\item (\cite[Theorem 3]{BCL}) Modulo $p$, $\Psi_{p,n}$ is the monic least common multiple of all polynomials in $\F_p[X]$ of degree at most $n$.
\item For each prime $p$, $\Psi_{p,n}/p \in \IntQ(\An) \setminus \IntQ(\mathcal{A}_{n+1})$.
\item Let $\alpha \in \oZ$ and let $f \in \IntQ(\{\alpha\}, \oZ)$. If $\deg f < [\Q(\alpha): \Q]$, then $\iota_\alpha f \in \Z[X]$. In particular, $f\in\Z_{(q)}[X]$ for each prime $q$ not dividing $\iota_{\alpha}$.
\end{enumerate}
\end{Lem}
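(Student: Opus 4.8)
The plan is to handle the three parts in order, with part~(1) feeding directly into part~(2). For part~(1), I would start from the classical factorization $X^{p^k}-X=\prod_{d\mid k}\prod_g g$ in $\F_p[X]$, where the inner product runs over all monic irreducible $g$ of degree $d$; this is immediate from $\F_{p^k}$ being the splitting field of the separable polynomial $X^{p^k}-X$. Multiplying over $k=1,\dots,n$ shows that a monic irreducible $g$ of degree $d$ occurs in $\Psi_{p,n}\bmod p$ to the exact power $\#\{k:1\le k\le n,\ d\mid k\}=\lfloor n/d\rfloor$ if $d\le n$, and to the power $0$ if $d>n$. Since the exact power of such a $g$ dividing the monic least common multiple of all (nonzero) polynomials of degree $\le n$ is also $\lfloor n/\deg g\rfloor$ (realized by $g^{\lfloor n/\deg g\rfloor}$, and $0$ once $\deg g>n$), comparing the two factorizations into irreducibles in $\F_p[X]$ yields the claim.

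For the inclusion $\Psi_{p,n}/p\in\IntQ(\An)$ in part~(2), fix $\alpha$ with $m:=[\Q(\alpha):\Q]\le n$ and set $O=O_{\Q(\alpha)}$. Since $\Psi_{p,n}(\alpha)/p$ lies in $\Q(\alpha)$, a field of degree $\le n$, it suffices to show $\Psi_{p,n}(\alpha)\in pO$; then $\Psi_{p,n}(\alpha)/p\in O\subseteq\An$. I would reduce modulo $p$ and use the isomorphism $O/pO\cong\prod_i O/\mathfrak{P}_i^{e_i}$, each factor $R_i:=O/\mathfrak{P}_i^{e_i}$ being a finite local $\F_p$-algebra with residue field $\F_{p^{f_i}}$ and nilpotent maximal ideal $\mathfrak{m}_i$ satisfying $\mathfrak{m}_i^{e_i}=0$, where $e_if_i\le m\le n$. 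In $R_i$ I would write the image of $\alpha$ as $\omega+\nu$, with $\omega$ a Teichm\"uller-type element (the stable value of $\bar\alpha^{p^{jf_i}}$ as $j\to\infty$, so $\omega^{p^{f_i}}=\omega$) and $\nu\in\mathfrak{m}_i$. Additivity of the $p^k$-power map in characteristic $p$ then gives, for every $k$ divisible by $f_i$, that $\bar\alpha^{p^k}-\bar\alpha=\nu^{p^k}-\nu\in\mathfrak{m}_i$. Because $e_i\le n/f_i$ forces $e_i\le\lfloor n/f_i\rfloor$, at least $e_i$ of the $n$ factors of $\Psi_{p,n}(\bar\alpha)$ lie in $\mathfrak{m}_i$, so their product vanishes in $R_i$; hence $\Psi_{p,n}(\bar\alpha)=0$ in every $R_i$, i.e.\ $\Psi_{p,n}(\alpha)\in pO$.

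For the strictness $\Psi_{p,n}/p\notin\IntQ(\mathcal{A}_{n+1})$, I would choose $g\in\Z[X]$ monic of degree $n+1$ whose reduction $\bar g$ is irreducible over $\F_p$, and let $\alpha$ be a root. Then $[\Q(\alpha):\Q]=n+1$, $p$ is inert in $\Z[\alpha]$, and $p\nmid\iota_\alpha$ (as $\bar g$ is separable), so $\Z[\alpha]_{(p)}=O_{\Q(\alpha),(p)}$; moreover $\bar\alpha$ generates $\F_{p^{n+1}}$ over $\F_p$, so $\bar\alpha^{p^k}\ne\bar\alpha$ for $1\le k\le n$ and thus $\Psi_{p,n}(\alpha)\notin p\Z[\alpha]$. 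It follows that $\Psi_{p,n}(\alpha)/p\notin O_{\Q(\alpha)}=\oZ\cap\Q(\alpha)$, whence $\Psi_{p,n}/p\notin\IntQ(\{\alpha\},\oZ)\supseteq\IntQ(\mathcal{A}_{n+1})$. Part~(3) is then short: $f(\alpha)\in\oZ\cap\Q(\alpha)=O_{\Q(\alpha)}$ gives $\iota_\alpha f(\alpha)\in\iota_\alpha O_{\Q(\alpha)}\subseteq\Z[\alpha]$, and since $1,\alpha,\dots,\alpha^{m-1}$ is simultaneously a $\Q$-basis of $\Q(\alpha)$ and a $\Z$-basis of $\Z[\alpha]$ while $\deg f<m$, uniqueness of coordinates forces every coefficient $\iota_\alpha a_i$ of $\iota_\alpha f$ into $\Z$; the ``in particular'' clause follows because $q\nmid\iota_\alpha$ makes $\iota_\alpha$ a unit of $\Z_{(q)}$, so $f\in\tfrac{1}{\iota_\alpha}\Z[X]\subseteq\Z_{(q)}[X]$.

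The step I expect to be the main obstacle is the bookkeeping inside part~(2): one must make the Teichm\"uller-type decomposition in the finite local rings $O/\mathfrak{P}_i^{e_i}$ rigorous and verify that the inequality $e_i\le\lfloor n/f_i\rfloor$ is exactly what is available—this is precisely where the degree bound $m\le n$ and the fact that $\Psi_{p,n}$ has exactly $n$ factors both enter, and neither can be relaxed. A cleaner alternative, which I would likely write up instead, is to complete at each $\mathfrak{P}_i$ and argue $\mathfrak{P}_i$-adically: use an honest Teichm\"uller lift in the local field $K_{\mathfrak{P}_i}$, the elementary estimate $v_{\mathfrak{P}_i}(\beta^p-\gamma^p)\ge v_{\mathfrak{P}_i}(\beta-\gamma)+1$ whenever $v_{\mathfrak{P}_i}(\beta-\gamma)>0$, and the fundamental identity $\sum_i e_if_i=m\le n$ to count how many of the factors $\alpha^{p^k}-\alpha$ have positive $\mathfrak{P}_i$-valuation.
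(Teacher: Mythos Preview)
Your argument is correct throughout, and parts~(1), (3), and the exclusion half of~(2) essentially match the paper's proof (you supply a self-contained proof of~(1) where the paper simply cites \cite{BCL}, and your witness for the exclusion in~(2) is the same as the paper's). The genuine difference is in the \emph{inclusion} $\Psi_{p,n}/p\in\IntQ(\An)$: the paper never touches the decomposition $O/pO\cong\prod_i O/\mathfrak{P}_i^{e_i}$, Teichm\"uller representatives, or any $\mathfrak{P}$-adic estimate. It uses part~(1) directly: the minimal polynomial $m_\alpha\in\Z[X]$ of $\alpha$ has degree at most $n$, so by~(1) one has $m_\alpha\mid\Psi_{p,n}\pmod p$, i.e.\ $\Psi_{p,n}=m_\alpha\, q+p\,r$ in $\Z[X]$, whence $\Psi_{p,n}(\alpha)=p\,r(\alpha)\in p\Z[\alpha]\subseteq pO_{\Q(\alpha)}$. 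This one-line deduction makes the entire local analysis you set up unnecessary, and the ``main obstacle'' you flag simply does not arise. Your route does work and has the minor payoff of identifying exactly which factors $X^{p^k}-X$ contribute at each prime above $p$, but the paper's approach is much shorter and, more to the point, explains why part~(1) is there at all---in your write-up, despite announcing that~(1) feeds into~(2), you never actually invoke~(1) for the inclusion.
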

\begin{proof}
%Part (1) is shown in \cite[Theorem 3]{BCL}. 
For (2), if $\alpha \in \An$ with minimal polynomial $m_\alpha(X)$, then $m_\alpha \mid \Psi_{p,n}$ modulo $p$. Hence, $\Psi_{p,n}(\alpha) \in pO_{\Q(\alpha)}$. To show that $\Psi_{p,n}/p \notin \IntQ(\mathcal{A}_{n+1})$, let $f \in \F_p[X]$ be monic and irreducible of degree $n+1$. Then, $f \nmid \Psi_{p,n}$ mod $p$. Next, let $F \in \Z[X]$ be monic, irreducible, and such that $F \equiv f$ mod $p$ (the existence of $F$ follows from Perron's Criterion for irreducibility \cite[Theorem 2.2.5]{Pra}). Let $\beta \in \oZ$ be a root of $F$, and let $O = O_{\Q(\beta)}$; note that $\beta\in\mathcal A_{n+1}$. Suppose that $\Psi_{p,n}(\beta)/p \in \oZ$. Then, $\Psi_{p,n}(\beta) \in pO$, and hence $\Psi_{p,n}(\beta) \equiv 0$ mod $p$. Since the image of $\beta$ in $O/pO$ is a root of $f$, this means that $f \mid \Psi_{p,n}$ mod $p$, which is a contradiction.

For (3), let $N = [\Q(\alpha): \Q]$ and assume that $f(X) = \sum_{i=0}^d a_i X^i$, where each $a_i \in \Q$ and $d < N$. Then, $f(\alpha) \in \oZ \cap \Q(\alpha) =  O_{\Q(\alpha)}$. Now, $\iota_\alpha O_{\Q(\alpha)} \subseteq \Z[\alpha]$, so $\sum_{i=0}^d \iota_\alpha a_i \alpha^i \in \Z[\alpha]$. Since $\Z[\alpha]$ is a free $\Z$-module with basis $1, \alpha, \ldots, \alpha^{N-1}$ and $d < N$, each $\iota_\alpha a_i \in \Z$, as required.
\end{proof}

In the examples later in this section, we will sometimes use extensions of $p$-adic valuations to $\oQ$ in order to prove that $\IntQ(S,\oZ)$ is trivial. The next two lemmas demonstrate how to do this.

\begin{Lem}\label{Extensions to Qbar lemma 1}
Let $S \subseteq \oZ$.
\begin{enumerate}[(1)]
\item Let $\sigma\in G={\rm Gal}(\oQ/\Q)$ and $U$ be a valuation domain of $\oQ$ extending $\Z_{(p)}$ for some $p\in\PP$. Then $\IntQ(S,U)=\IntQ(\sigma(S),\sigma(U))$. In particular, if $S$ is $G$-invariant (i.e., $\sigma(S)=S,\forall\sigma\in G$) and $U,U'$ are two valuation domains extending $\Z_{(p)}$ then $\IntQ(S,U)=\IntQ(S,U')$ and so $\IntQ(S,\overline{\Z_{(p)}})=\IntQ(S,U)$ for any valuation domain $U$ extending $\Z_{(p)}$.

\item Let $p \in \PP$. If there exists an extension $u$ of $v_p$ to $\oQ$ with associated valuation domain $U$ such that $\Int_{\oQ}(S,U)$ is trivial, then $\IntQ(S,\overline{\Z_{(p)}}) = \Z_{(p)}[X]$.
\item If, for every $p \in \PP$, there exists an extension $u_p$ of $v_p$ to $\oQ$ with associated valuation domain $U_p$ such that $\Int_{\oQ}(S,U_p)$ is trivial, then $\IntQ(S,\oZ)$ is trivial.
\end{enumerate}
\end{Lem}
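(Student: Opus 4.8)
The plan is to prove the three parts in order; the only ingredient that is not a direct unwinding of definitions is needed for part (1). For part (1), I would fix $f\in\Q[X]$ and use that its coefficients, being rational, are fixed by every $\sigma\in G$, so that $f(\sigma(s))=\sigma(f(s))$ for all $s\in S$. Writing $u$ for the valuation attached to $U$, the valuation ring of $u\circ\sigma^{-1}$ is $\sigma(U)$, whence
\[
f\in\IntQ(\sigma(S),\sigma(U)) \ \Longleftrightarrow\ \sigma(f(s))\in\sigma(U)\ \forall s\in S \ \Longleftrightarrow\ f(s)\in U\ \forall s\in S \ \Longleftrightarrow\ f\in\IntQ(S,U),
\]
which is the first assertion. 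For the ``in particular'' clause I would invoke the conjugacy theorem for extensions of a valuation in a Galois extension, which remains valid for the infinite Galois extension $\oQ/\Q$ (e.g.\ via a compactness argument on $\Gal(\oQ/\Q)$): the group $G$ acts transitively on the valuation domains of $\oQ$ lying over $\Z_{(p)}$. Given two such $U,U'$, I would pick $\sigma\in G$ with $\sigma(U)=U'$ and combine $G$-invariance of $S$ with the displayed equivalence to get $\IntQ(S,U')=\IntQ(\sigma(S),\sigma(U))=\IntQ(S,U)$. Finally, since $\oQZp$ is the intersection of the valuation domains $U$ of $\oQ$ lying over $\Z_{(p)}$, this yields $\IntQ(S,\oQZp)=\bigcap_U\IntQ(S,U)=\IntQ(S,U)$ for any fixed such $U$.

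For part (2), I would first note that $S\subseteq\oZ\subseteq\oQZp\subseteq U$ and $U\cap\Q=\Z_{(p)}$, so $\Z_{(p)}[X]\subseteq\IntQ(S,\oQZp)$ is automatic. For the reverse inclusion, if $f\in\IntQ(S,\oQZp)$ then $f(S)\subseteq\oQZp\subseteq U$, so $f\in\IntQ(S,U)=\Int_{\oQ}(S,U)\cap\Q[X]$; using the hypothesis $\Int_{\oQ}(S,U)=U[X]$ and then $U\cap\Q=\Z_{(p)}$, this set equals $U[X]\cap\Q[X]=\Z_{(p)}[X]$. Hence $\IntQ(S,\oQZp)=\Z_{(p)}[X]$, as claimed.

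For part (3), I would apply part (2) at every prime to obtain $\IntQ(S,\oQZp)=\Z_{(p)}[X]$ for all $p\in\PP$, and then intersect over $\PP$: by \eqref{globallocal} together with $\Z[X]=\bigcap_{p\in\PP}\Z_{(p)}[X]$ (equivalently, by Proposition \ref{1 iff 2}), $\IntQ(S,\oZ)=\bigcap_{p\in\PP}\IntQ(S,\oQZp)=\Z[X]$, so $\IntQ(S,\oZ)$ is trivial. I expect the main obstacle to be purely a matter of citing the right fact --- the transitivity of the $G$-action on the extensions of $v_p$ to $\oQ$, used in the ``in particular'' clause of part (1). In the infinite-degree setting this is slightly beyond the textbook statement, but it follows from the finite Galois case by a standard inverse-limit/compactness argument; everything else in the lemma is a direct manipulation of the definitions of the polynomial rings involved.
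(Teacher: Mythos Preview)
Your proposal is correct and follows essentially the same route as the paper: in part (1) both arguments hinge on $f(\sigma(s))=\sigma(f(s))$ for $f\in\Q[X]$ and the transitivity of $G$ on the valuations above $p$, and parts (2) and (3) are handled identically via $\IntQ(S,U)=\Int_{\oQ}(S,U)\cap\Q[X]=U[X]\cap\Q[X]=\Z_{(p)}[X]$ followed by \eqref{globallocal}. Your explicit mention that transitivity in the infinite Galois case requires a compactness/inverse-limit argument is a nice addition the paper leaves implicit.
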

%{\rc Note that the assumption on (1) is not restrictive, since $\IntQ(S,\oZ)=\IntQ(G(S),\oZ)$.}
\begin{proof}
(1)  Let $f\in\IntQ(S,U)$. Since $f \in \Q[X]$, $\sigma(f(s))=f(\sigma(s))\in \sigma(U)$ for each $s \in S$, so $f\in \IntQ(\sigma(S),\sigma(U))$. The other containment is proved in the same way by applying $\sigma^{-1}$ to the previous relation. Now, suppose that $S$ is $G$-invariant. The valuation domains $U,U'$ are conjugate by some $\sigma\in G$, so $U'=\sigma(U)$ and $\IntQ(S,U)=\IntQ(S,U')$. For the final claim, note that $\overline{\Z_{(p)}}=\bigcap_{u|v_p}U$, where the the intersection is taken over all extensions $u$ of $v_p$ to $\oQ$, and $U$ is the valuation domain corresponding to $u$. We have,
\begin{equation}\label{IntQSZp}
\IntQ(S,\overline{\Z_{(p)}})=\bigcap_{u|v_p}\IntQ(S,U)    
\end{equation}
and so if $S$ is $G$-invariant, we have $\IntQ(S,\overline{\Z_{(p)}})=\IntQ(S,U)$ for each $u$ extending $v_p$ to $\oQ$.

(2) Assume the desired $u$ and $U$ exist. Then, $\Int_{\oQ}(S,U) = U[X]$, so
\begin{equation*}
\IntQ(S,U) = \Int_{\oQ}(S,U) \cap \Q[X] = \Z_{(p)}[X].
\end{equation*}
By \eqref{IntQSZp}, in order for $\IntQ(S,\overline{\Z_{(p)}})$ to be trivial it is sufficient that $\IntQ(S,U)=\Z_{(p)}[X]$ for only one extension $u$ of $v_p$. As shown above, this holds for $u$, so we conclude that $\IntQ(S,\overline{\Z_{(p)}}) = \Z_{(p)}[X]$.

(3) By (2), we have $\IntQ(S, \overline{\Z_{(p)}}) = \Z_{(p)}[X]$ for each $p$, and then $\IntQ(S,\oZ) = \Z[X]$ by \eqref{globallocal}.
\end{proof}

\begin{Rem}
We stress that the condition appearing in item (2) of Lemma \ref{Extensions to Qbar lemma 1} is sufficient but not necessary, if $S$ is not $G$-invariant ((1) of the same Lemma). We will see in Example \ref{Gilmer example} (due to Gilmer, \cite[Example 14]{GilmEx}) and in Example \ref{Chabert example}  (due to Chabert, \cite[Example 6.2]{ChabEx}) that there are subsets $S\subset \overline{\Z_{(p)}}$ such that $\IntQ(S,U)$ is non trivial for each extension $U$ of $\Z_{(p)}$ to $\oQ$ but their intersection $\bigcap_{U|\Z_{(p)}}\IntQ(S,U)=\IntQ(S,\overline{\Z_{(p)}})$ \eqref{IntQSZp} is trivial (see Propositions \ref{Unbounded res field deg prop} and  \ref{triviality unbounded ramification}).
\end{Rem}

\begin{Lem}\label{Extensions to Qbar lemma 2}
Let $S =\{s_i\}_{i \in \N} \subseteq \oZ$. Assume that $p \in \PP$ and $u$ is an extension of $v_p$ to $\oQ$ such that one of the following two conditions holds:
\begin{enumerate}[(i)]
\item $\{u(s_i)\}_{i \in \N}$ is a strictly decreasing sequence with limit 0.
\item $u(s_i-s_j)=0$ for all distinct $i, j \in \N$.
\end{enumerate}
Then, $\IntQ(S,\overline{\Z_{(p)}}) = \Z_{(p)}[X]$. Moreover, if for every $p \in \PP$ there exists an extension $u$ of $v_p$ satisfying either (i) or (ii), then $\IntQ(S,\oZ)$ is trivial.
\end{Lem}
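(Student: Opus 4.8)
The plan is to obtain the statement as a quick consequence of Theorem~\ref{Int(S,V) nontrivial V1} and Lemma~\ref{Extensions to Qbar lemma 1}. Fix $p$ and an extension $u$ of $v_p$ to $\oQ$ as in the hypothesis, and let $U$ be its valuation domain; this is a rank one valuation domain with fraction field $\oQ$ and maximal ideal $M=\{x\in\oQ\mid u(x)>0\}$, so Theorem~\ref{Int(S,V) nontrivial V1} applies with $V=U$ and $K=\oQ$. By part~(2) of Lemma~\ref{Extensions to Qbar lemma 1}, it suffices to show that $\Int_{\oQ}(S,U)$ is trivial, i.e.\ equals $U[X]$; the ``moreover'' assertion then follows immediately from part~(3) of the same lemma (equivalently, from \eqref{globallocal}).

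I would prove that $\Int_{\oQ}(S,U)=U[X]$ by showing that $S$ fails condition~(4) of Theorem~\ref{Int(S,V) nontrivial V1}, exhibiting the required pseudo-monotone sequence. In both cases the $s_i$ are pairwise distinct --- in case~(i) because their $u$-values differ, in case~(ii) because $u(s_i-s_j)=0\neq\infty$ --- so $S$ is infinite and $E:=\{s_i\}_{i\in\N}$ is a sequence of distinct elements of $U$. In case~(i), for $i<j$ we have $u(s_i)>u(s_j)$, hence $u(s_i-s_j)=u(s_j)$; thus $E$ is pseudo-divergent with gauge $\delta_i=u(s_i)$ (well defined independently of the auxiliary index), and since $\{\delta_i\}$ decreases to $0$, Definition~\ref{Breadth def} gives $\Br(E)=\{x\in\oQ\mid u(x)>0\}=M$. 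In case~(ii), $E$ is pseudo-stationary with gauge $\delta=0$, so $\Br(E)=\{x\in\oQ\mid u(x)\geq 0\}=U$. In either case $S$ contains a pseudo-divergent sequence with breadth ideal $M$ or a pseudo-stationary sequence with breadth ideal $U=V$, so Theorem~\ref{Int(S,V) nontrivial V1} gives that $\Int_{\oQ}(S,U)$ is trivial. One may instead check directly that condition~(2) of that theorem fails for $(S,U)$: given a finite $T\subseteq S$ and $\delta>0$, pick $s_k\in S\setminus T$ with $k$ beyond every index occurring in $T$; in case~(ii) then $u(s_k-t)=0<\delta$ for all $t\in T$, and in case~(i), taking $k$ large enough that also $u(s_k)<\delta$ and $u(s_k)<u(t)$ for all $t\in T$ forces $u(s_k-t)=u(s_k)<\delta$ for all $t\in T$.

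Since the substantive content is packaged in Theorem~\ref{Int(S,V) nontrivial V1} and Lemma~\ref{Extensions to Qbar lemma 1}, I do not foresee a real obstacle; the proof should be short. The only points demanding a little care are that Theorem~\ref{Int(S,V) nontrivial V1} is stated for an arbitrary valuation domain, with no completeness or discreteness hypothesis, so it genuinely applies to $U\subseteq\oQ$, and that the breadth-ideal computations land on the two extreme ideals $M$ and $U$ exactly because in case~(i) the gauge $\{\delta_i\}$ is coinitial among the positive values of $u$ while in case~(ii) the gauge is $0$.
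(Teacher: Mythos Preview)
Your proof is correct and follows essentially the same route as the paper: identify $S$ as a pseudo-divergent sequence with $\Br(E)=M$ in case~(i) and as a pseudo-stationary sequence with $\Br(E)=U$ in case~(ii), apply Theorem~\ref{Int(S,V) nontrivial V1} to conclude $\Int_{\oQ}(S,U)$ is trivial, and then invoke Lemma~\ref{Extensions to Qbar lemma 1} and \eqref{globallocal}. You supply more detail than the paper (the gauge computations, the verification that the $s_i$ are distinct, and the optional direct check that condition~(2) of Theorem~\ref{Int(S,V) nontrivial V1} fails), but the argument is the same.
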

\begin{proof}
Let $U \subseteq \oQ$ be the valuation domain associated to $u$, and let $M$ be the maximal ideal of $U$. If (i) holds, then with respect to $u$, $S$ is a pseudo-divergent sequence with breadth ideal $M$; and if (ii) holds, then $S$ is pseudo-stationary with respect to $u$, and the breadth ideal is $U$. In either case, $\Int_{\oQ}(S,U)$ is trivial by Theorem \ref{Int(S,V) nontrivial V1}. By Lemma \ref{Extensions to Qbar lemma 1}, $\IntQ(S,\overline{\Z_{(p)}}) = \Z_{(p)}[X]$, and if this is true for every prime $p$, then $\IntQ(S,\oZ)$ is trivial by \eqref{globallocal}.
\end{proof}

Next, we will explore examples of unbounded sets $S \subseteq \oZ$ such that $\IntQ(S,\oZ)$ is trivial. Later in Sections \ref{e and f section} and \ref{Poly closure section}, we construct unbounded sets for which $\IntQ(S,\oZ)$ is nontrivial. As demonstrated in Example \ref{Motivating example}, the indices of algebraic integers can be used to give a sufficient condition for $\IntQ(S,\oZ)$ to be trivial. We now restate that example, and give a slight variation that does not require each index $\iota_s$ to be 1.

\begin{Prop}\label{index trivial conditions}
Let $S \subseteq \oZ$.
\begin{enumerate}[(1)]%[label=\roman*.)]
\item Assume that $S$ contains a sequence $\{s_i\}_{i \in \N} \subseteq S$ of unbounded degree and such that $\iota_{s_i}=1$ for all $i \in \N$. Then, $\IntQ(S,\oZ)$ is trivial.

\item Assume that for each integer $n \geq 1$, there exists a finite subset $\{s_1, \ldots, s_m\} \subseteq S$ such that $[\Q(s_i):\Q] > n$ for each $1 \leq i \leq m$ and $\gcd(\iota_{s_1}, \ldots, \iota_{s_m})=1$. Then, $\IntQ(S,\oZ)$ is trivial.
\end{enumerate}
\end{Prop}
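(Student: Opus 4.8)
The plan is to reduce both parts to the elementary argument of Example \ref{Motivating example}, using Lemma \ref{Psi and index}(3) to control coefficients prime-by-prime. The key observation is that a polynomial $f \in \Q[X]$ lies in $\Z[X]$ if and only if it lies in $\Z_{(q)}[X]$ for every prime $q$; so it suffices to show that any $f \in \IntQ(S,\oZ)$ has $q$-integral coefficients for each fixed $q$.

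For part (1), take $f \in \IntQ(S,\oZ)$ of degree $d$ and fix a prime $q$. Choose an index $i$ with $[\Q(s_i):\Q] > d$; such $i$ exists because the sequence has unbounded degree. Since $\iota_{s_i} = 1$ and $\deg f < [\Q(s_i):\Q]$, Lemma \ref{Psi and index}(3) gives $\iota_{s_i} f = f \in \Z_{(q')}[X]$ for every prime $q'$ not dividing $\iota_{s_i} = 1$, i.e.\ for every prime $q'$; in particular $f \in \Z_{(q)}[X]$. As $q$ was arbitrary, $f \in \bigcap_q \Z_{(q)}[X] = \Z[X]$, so $\IntQ(S,\oZ) = \Z[X]$. (This is essentially the cleanest phrasing of Example \ref{Motivating example}, now organized so that part (2) is a genuine refinement.)

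For part (2), again take $f \in \IntQ(S,\oZ)$ of degree $d$ and fix a prime $q$. Apply the hypothesis with $n = d$ to obtain $\{s_1,\ldots,s_m\} \subseteq S$ with $[\Q(s_j):\Q] > d$ for all $j$ and $\gcd(\iota_{s_1},\ldots,\iota_{s_m}) = 1$. Since $\gcd = 1$, at least one $\iota_{s_j}$ is not divisible by $q$ — otherwise $q$ would divide the gcd. For that $j$, Lemma \ref{Psi and index}(3) applies ($\deg f < [\Q(s_j):\Q]$) and yields $f \in \Z_{(q)}[X]$ because $q \nmid \iota_{s_j}$. Since this works for every prime $q$, we again conclude $f \in \Z[X]$.

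I do not anticipate a serious obstacle here: the work is entirely in Lemma \ref{Psi and index}(3), which is already available, and the only genuinely new idea over Example \ref{Motivating example} is the remark that the gcd condition forces, for each individual prime $q$, the existence of an $s_j$ whose index is coprime to $q$ — and that a single such $s_j$ is enough to pin down $q$-integrality of the coefficients. One small point to be careful about is that the set $\{s_1,\ldots,s_m\}$ is allowed to depend on both $n$ (hence on $\deg f$) and is used for all primes $q$ simultaneously via the gcd condition, so no uniformity across $q$ is needed beyond what the hypothesis already supplies.
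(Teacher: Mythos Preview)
Your proposal is correct and follows essentially the same approach as the paper: both rely on Lemma \ref{Psi and index}(3) applied to elements $s_i$ of degree exceeding $\deg f$, together with the gcd condition. The only cosmetic difference is that the paper uses the conclusion $\iota_{s_i} f \in \Z[X]$ for each $i$ and then invokes a B\'ezout combination $\sum a_i\iota_{s_i}=1$ to clear denominators globally, whereas you use the ``in particular'' clause of the same lemma and argue prime-by-prime; these are equivalent formulations of the same elementary step.
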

\begin{proof}
Part (1) is Example \ref{Motivating example}. For (2), let $f \in \IntQ(S, \oZ)$ of degree $d$ and let $s_1, \ldots, s_m$ be the elements of $S$ with the stated properties. For each $i$, we have $[\Q(s_i):\Q] > d$, so $\iota_{s_i}f \in \Z[X]$ by Lemma \ref{Psi and index}(3). The condition $\gcd(\iota_{s_1}, \ldots, \iota_{s_m}) = 1$ then implies that $f \in \Z[X]$.
\end{proof}

 In Example \ref{unbounded trivial example 1}, we will show that the conditions on indices in Proposition \ref{index trivial conditions} are not necessary for $\IntQ(S,\oZ)$ to be trivial. This example requires Dedekind's Index Theorem, which describes the primes $p$ such that $p \mid \iota_\alpha$. For this theorem, we will follow the treatment given in \cite{Conrad}. An equivalent statement is available in \cite[Theorem 6.1.4]{Cohen}.

\begin{Thm}\label{Dedekind index theorem}(Dedekind Index Theorem)
Let $\alpha$ be an algebraic integer with minimal polynomial $f \in \Z[X]$. For a prime $p$, use a bar to denote reduction mod $p$. Factor $f$ mod $p$ as
\begin{equation*}
\overline{f} = \overline{\pi_1}^{e_1} \cdots \overline{\pi_k}^{e_k},
\end{equation*}
where each $\overline{\pi_i} \in \F_p[X]$ is monic and irreducible, and each $e_i \geq 1$. For each $i$, let $\pi_i$ be a monic lift of $\overline{\pi_i}$ to $\Z[X]$ and let $F \in \Z[X]$ be such that
\begin{equation*}
f = \pi_1^{e_1} \cdots \pi_k^{e_k} + pF.
\end{equation*}
Then, $p \mid \iota_\alpha$ if and only if $\overline{\pi_j} \mid \overline{F}$ in $\F_p[X]$ for some $1 \leq j \leq k$ such that $e_j \geq 2$.
\end{Thm}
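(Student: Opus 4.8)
The plan is to reduce the criterion to a local statement at $p$ and then to standard facts about one-dimensional local rings. Write $K=\Q(\alpha)$ and $n=\deg f$. Since forming integral closures, localizations, and completions are all compatible (as in Lemma \ref{AnpAn V1}), $p\mid\iota_\alpha$ if and only if the order $\Z_p[\alpha]=\Z_p[X]/(f)$ fails to be integrally closed in its total ring of fractions $\Q_p\otimes_\Q K\cong\Q_p[X]/(f)$, which (as $f$ is separable) is a finite product of $p$-adic fields. So the entire problem is to decide when $\Z_p[\alpha]$ is the maximal order.

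First I would apply Hensel's lemma to the factorization $\overline f=\prod_i\overline{\pi_i}^{e_i}$ into pairwise coprime factors: it lifts to $f=\prod_i f_i$ with each $f_i\in\Z_p[X]$ monic and $\overline{f_i}=\overline{\pi_i}^{e_i}$, so by the Chinese Remainder Theorem $\Z_p[\alpha]\cong\prod_i R_i$ with $R_i:=\Z_p[X]/(f_i)$, and the integral closure decomposes compatibly. Thus $p\nmid\iota_\alpha$ if and only if every $R_i$ is integrally closed. Each $R_i$ is a one-dimensional Noetherian local ring, finite free over $\Z_p$, with maximal ideal $\mathfrak{m}_i=(p,\pi_i(X))$ and residue field $\ell_i=\F_p[X]/(\overline{\pi_i})$. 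For the factors with $e_i=1$ the answer is immediate: a monic polynomial with irreducible reduction is itself irreducible, so $R_i$ is a domain with $R_i/pR_i$ a field, hence $\mathfrak{m}_i=pR_i$ is principal and $R_i$ is a discrete valuation ring, always maximal. This is precisely why only the exponents $e_j\geq 2$ can contribute.

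For a factor with $e_i\geq 2$, the key claim is the chain of equivalences: $R_i$ integrally closed $\iff$ $\mathfrak{m}_i$ principal $\iff$ $\dim_{\ell_i}(\mathfrak{m}_i/\mathfrak{m}_i^2)=1$ $\iff$ $p\in\mathfrak{m}_i^2$. The first equivalence uses the Krull intersection theorem (a one-dimensional Noetherian local ring with principal maximal ideal is a DVR, hence normal; conversely a local ring equal to its integral closure has no nontrivial idempotents and so must be a single DVR). The last equivalence comes from the surjection $\mathfrak{m}_i/\mathfrak{m}_i^2\twoheadrightarrow\overline{\mathfrak{m}}_i/\overline{\mathfrak{m}}_i^2$ onto the analogous object for $R_i/pR_i=\F_p[X]/(\overline{\pi_i}^{e_i})$: the target is one-dimensional over $\ell_i$ exactly because $e_i\geq 2$, while the kernel is the $\ell_i$-line spanned by the class of $p$, so $\mathfrak{m}_i/\mathfrak{m}_i^2$ is one-dimensional precisely when that class vanishes.

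It then remains to make $p\in\mathfrak{m}_i^2$ explicit. Writing $f_i=\pi_i^{e_i}+p\phi_i$ with $\phi_i\in\Z_p[X]$, I would lift a witnessing identity $p=Ap^2+Bp\pi_i+C\pi_i^2$ back to $\Z_p[X]$ modulo $(f_i)$; using that $\pi_i$ is prime in $\Z_p[X]$ and that $e_i\geq 2$, one gets $\pi_i\mid(1-D\phi_i)$ for a suitable $D$, hence $\overline{\pi_i}\nmid\overline{\phi_i}$, and the converse is the direct computation reversing these steps. Finally, comparing $f=\prod_i\pi_i^{e_i}+pF$ with $f=\prod_i f_i=\prod_i(\pi_i^{e_i}+p\phi_i)$ gives $\overline{F}\equiv\sum_i\overline{\phi_i}\prod_{l\neq i}\overline{\pi_l}^{e_l}\pmod p$, and reducing modulo $\overline{\pi_i}$ (all other summands are multiples of $\overline{\pi_i}^{e_i}$, and $\prod_{l\neq i}\overline{\pi_l}^{e_l}$ is coprime to $\overline{\pi_i}$) yields $\overline{\pi_i}\mid\overline{F}\iff\overline{\pi_i}\mid\overline{\phi_i}$. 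Chaining everything, $R_i$ fails to be integrally closed exactly when $e_i\geq 2$ and $\overline{\pi_i}\mid\overline{F}$, which is the assertion. I expect the commutative-algebra core — characterizing integral closedness of the possibly non-domain rings $R_i$ through the condition $p\in\mathfrak{m}_i^2$ — to be the main obstacle; the rest is bookkeeping, and in any case the result is classical and may simply be quoted from \cite{Conrad} or \cite[Theorem 6.1.4]{Cohen}.
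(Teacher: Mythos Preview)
The paper does not prove this theorem at all: it is stated as a classical result and attributed to \cite{Conrad} (with an equivalent formulation in \cite[Theorem 6.1.4]{Cohen}), and is then used as a black box in Lemma~\ref{c^3 polys} and Example~\ref{unbounded trivial example 1}. So there is no ``paper's own proof'' to compare against; your final remark that the result ``may simply be quoted'' is exactly what the authors do.

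That said, your sketch is a correct outline and is close in spirit to the argument in \cite{Conrad}. The reduction to $\Z_p$, the Hensel factorization $f=\prod_i f_i$ with $\overline{f_i}=\overline{\pi_i}^{\,e_i}$, and the identification of the obstruction with the failure of $R_i=\Z_p[X]/(f_i)$ to be a DVR are all standard. Your key local step---that for $e_i\ge 2$ the ring $R_i$ is regular if and only if $p\in\mathfrak{m}_i^2$, equivalently $\overline{\pi_i}\nmid\overline{\phi_i}$ where $f_i=\pi_i^{e_i}+p\phi_i$---is right: when $\overline{\pi_i}\nmid\overline{\phi_i}$ the element $\phi_i$ is a unit in $R_i$ and $p=-\phi_i^{-1}\pi_i^{e_i}\in(\pi_i)^2$; the converse direction you describe also goes through. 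The only place to be a little careful is your appeal to ``a one-dimensional Noetherian local ring with principal maximal ideal is a DVR'': this needs $R_i$ reduced (which it is, since $f$ is separable), after which principal maximal ideal plus Krull intersection forces $R_i$ to be a domain, hence a DVR. With that caveat, the argument is complete.
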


\begin{Def}\label{c,n polys} Let $c, n \in \Z$ be such that $c \geq 2$ and $n \geq 2$. We define $$f_{c,n}(X):= X^n + c^3 X^{n-1} + c^2$$ and $\iota_{c,n} := \iota_\alpha$, where $\alpha$ is a root of $f_{c,n}$.
\end{Def}

\begin{Lem}\label{c^3 polys}
Let $c, n \in \Z$ be such that $c \geq 2$ and $n \geq 2$.
\begin{enumerate}[(1)]%[label=\roman*.)]
\item The polynomial $f_{c,n}$ is irreducible over $\Q$.

\item Let $\alpha\in\oQ$ be a root of $f_{c,n}$ and $u$ an extension of the $p$-adic valuation $v_p$ to $\oQ$. If $p$ is a prime and $p \mid c$, then $u(\alpha) = 2v_p(c)/n$.

\item If $p$ is a prime and $p \mid c$, then $p \mid \iota_{c,n}$.

\item If $q$ is a prime and $q \nmid c$ but $q \mid n$, then $q \nmid \iota_{c,n}$.

\item If $m\in\N$ and $q$ is a prime such that $q \nmid c$, then there exists a prime $\ell>m$ such that $q \nmid \iota_{c,\ell}$.
\end{enumerate}
\end{Lem}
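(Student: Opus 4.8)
The plan is to treat the five parts in turn, using only Perron's irreducibility criterion, Newton polygons, and the Dedekind Index Theorem (Theorem \ref{Dedekind index theorem}).

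For (1), I would apply Perron's irreducibility criterion \cite[Theorem 2.2.5]{Pra} to $f_{c,n}(X)=X^n+c^3X^{n-1}+c^2$: the coefficient of $X^{n-1}$ has absolute value $c^3$, every intermediate coefficient vanishes, and the constant term has absolute value $c^2$, so Perron's hypothesis $c^3>1+c^2$ holds for all $c\geq2$ (indeed $c^3-c^2-1=c^2(c-1)-1\geq3$). Hence $f_{c,n}$ is irreducible over $\Q$, so it is the minimal polynomial of any root $\alpha$ and $[\Q(\alpha):\Q]=n$. For (2), set $v=v_p(c)\geq1$; since the only nonzero coefficients of $f_{c,n}$ are $1$, $c^3$, $c^2$ in degrees $n$, $n-1$, $0$, the vertices of the Newton polygon of $f_{c,n}$ relative to $v_p$ (equivalently $u$) are to be found among $(0,2v)$, $(n-1,3v)$, $(n,0)$. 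The segment from $(0,2v)$ to $(n,0)$ has slope $-2v/n$ and meets the line $x=n-1$ at height $2v/n<3v$, so $(n-1,3v)$ lies strictly above it and the Newton polygon consists of that single segment; by the standard theory every root of $f_{c,n}$, and in particular $\alpha$, has $u$-value $-(-2v/n)=2v_p(c)/n$.

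For (3) and (4), I would invoke the Dedekind Index Theorem. If $p\mid c$, then $\overline{f_{c,n}}=X^n$ in $\F_p[X]$, so in the notation of Theorem \ref{Dedekind index theorem} there is a single irreducible factor $\overline{\pi_1}=X$ with multiplicity $e_1=n\geq2$; lifting $\pi_1=X$ gives $F=(c^3/p)X^{n-1}+c^2/p\in\Z[X]$ (integral since $p^3\mid c^3$ and $p^2\mid c^2$). Because $p^2\mid c^2$, the constant term $c^2/p$ of $F$ is divisible by $p$, so $\overline{\pi_1}=X$ divides $\overline{F}$, and the theorem yields $p\mid\iota_{c,n}$. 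For (4), suppose $q\nmid c$ but $q\mid n$. Modulo $q$ the derivative $f_{c,n}'=nX^{n-1}+(n-1)c^3X^{n-2}$ reduces to $(n-1)c^3X^{n-2}$, a nonzero scalar times $X^{n-2}$ (using $q\nmid c$ and $q\nmid n-1$); since $\overline{f_{c,n}}(0)=\overline{c}^2\neq0$, $\overline{f_{c,n}}$ shares no factor with this reduced derivative, so $\overline{f_{c,n}}$ is separable over $\F_q$. Then every $e_i$ in Theorem \ref{Dedekind index theorem} equals $1$, and the theorem gives $q\nmid\iota_{c,n}$.

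For (5), the key point is that the separability argument of (4) works just as well when $q\mid l-1$ in place of $q\mid n$, and such primes $l$ are abundant. Given $m$ and a prime $q\nmid c$, use Dirichlet's theorem on primes in arithmetic progressions to pick a prime $l>m$ with $l\equiv1\pmod q$. Then modulo $q$ the derivative $f_{c,l}'=lX^{l-1}+(l-1)c^3X^{l-2}$ reduces to $X^{l-1}$ (using $l\equiv1$ and $q\mid l-1$), and since $\overline{f_{c,l}}(0)=\overline{c}^2\neq0$ we again conclude that $\overline{f_{c,l}}$ is separable over $\F_q$, so Theorem \ref{Dedekind index theorem} gives $q\nmid\iota_{c,l}$. (One could instead compute the discriminant of $f_{c,n}$ via the resultant of $f_{c,n}$ with $f_{c,n}'=X^{n-2}(nX+(n-1)c^3)$ and bound its $q$-adic valuation, but the separability route is shorter.) None of these steps poses a real obstacle; the places that need a little care are the Newton-polygon bookkeeping in (2) and, in (3), checking the integrality of $F$ together with the divisibility $X\mid\overline{F}$ feeding into Dedekind's theorem.
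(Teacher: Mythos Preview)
Your proposal is correct and matches the paper's proof almost verbatim for parts (1), (3), (4), and (5): Perron's criterion for (1), the Dedekind Index Theorem applied to $\overline{f_{c,n}}=X^n$ for (3), the separability-via-derivative argument for (4), and Dirichlet's theorem to produce a prime $l\equiv 1\pmod q$ feeding the same separability argument for (5). The only difference is in (2): you run a Newton-polygon argument, whereas the paper rewrites $\alpha^n=-c^2(c\alpha^{n-1}+1)$ and observes that $u(c\alpha^{n-1}+1)=0$ (since $u(c)>0$ and $\alpha$ is integral), giving $nu(\alpha)=2v_p(c)$ directly; both routes are short and standard, and neither offers a real advantage over the other.
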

%{\rc if v) is correct it has to replace iv), but its proof goes through the case iv)}
\begin{proof}
(1) This follows from Perron's Criterion for irreducibility \cite[Theorem 2.2.5]{Pra}.

(2) We have $\alpha^n = -c^2(c\alpha^{n-1}+1)$. If $p$ divides $c$, then $u(\alpha^n) = v_p(c^2)$, and so $u(\alpha) = 2v_p(c)/n$.

(3) Assume $p \mid c$, and use a bar to denote reduction mod $p$. Then, $\overline{f_{c,n}}(X) = X^n$, and $f_{c,n}(X) = X^n + pF_{c,n}(X)$, where $F_{c,n}(X) = \tfrac{c^3}{p}X^{n-1} + \tfrac{c^2}{p}$. Note that $F_{c,n} \in \Z[x]$ and $\overline{F_{c,n}}= 0$. By Theorem \ref{Dedekind index theorem}, $p \mid \iota_{p,n}$.

(4) Assume $q \nmid c$ and $q \mid n$. When reduced mod $q$, the derivative of $f_{c,n}(X)$ equals $(n-1)c^3X^{n-2}$, which is either constant or has 0 as its only root. Since $q \nmid f_{c,n}(0)$, we see that $f_{c,n}$ mod $q$ is separable. Hence, when applying the Dedekind Index Theorem, each exponent $e_j$ will equal 1. We conclude that $q \nmid \iota_{c,n}$.

(5) We have $f_{c,n}'(X)=X^{n-2}((n-1)c^3+nX)$. By the Dirichlet's theorem on arithmetic progressions, there exists a prime $\ell$ of the form $qk+1$, $\ell>m$; in particular,  $v_q(\ell-1)>0$ and so, modulo $q$, the only root of $f_{c,\ell}'(X)$ is $0$ (note that $v_q(\ell)=0$) which is not a root of $f_{c,\ell}$ modulo $q$ because we are assuming $v_q(c)=0$. Hence, $f_{c,\ell}$ is separable modulo $q$ so that $q\nmid\iota_{c,\ell}$ by Theorem \ref{Dedekind index theorem}.
%The roots of the derivative are $0$ and $z=-\frac{1-n}{n}p^3$. If either $q\mid n$ or $q\mid n-1$ we are in case (iv), so assume that $v_q(n(n-1))=0$.  A simple calculation shows that the discriminant of $f_{c,n}$ is equal to
%$$\text{disc}(f_{c,n})=n^n\text{Res}(f_{c,n},f_{c,n}')=n^nf_{c_n}(0)f_{c,n}(z)=c^4(n^n+c^{3n-2}(1-n)^{n-1})$$
%so that $v_q(l^l+c^{3l-2}(1-l)^{l-1})=0$ because clearly $v_q(l)=0$. Hence, $\text{disc}(f_{c,l})$ is not divisible by $q$ as wanted. In particular, 
\end{proof}

\begin{Ex}\label{unbounded trivial example 1}
It is possible that the indices of the elements of an unbounded sequence $S=\{s_i\}_{i\in\N}\subseteq\oZ$ can all share a common prime factor, yet the corresponding ring of integral-valued polynomials $\IntQ(S,\oZ)$ is trivial.

Fix a prime $p$, and let $q_1 < q_2 < \ldots$ be an ordering of all the primes not equal to $p$. For each $i \in \N$, let $f_{p,q_i}$ be as in Definition \ref{c,n polys}, and let $s_i\in\oQ$ be a root of $f_{p,q_i}$. Take $S = \{s_i\}_{i \in \N}$. Then, $\{[\Q(s_i): \Q]\}_{i \in \N}$ is unbounded, and by Lemma \ref{c^3 polys} each $\iota_{s_i}$ is divisible by $p$ and not divisible by $q_i$. We will show that $\IntQ(S,\oZ)$ is trivial.

%Let $u$ be an extension of $v_p$ to $\oQ$, and let $U$ be the corresponding valuation domain. 
By Lemma \ref{c^3 polys}(2), $u_p(s_i)=2/q_i$ for each $i$, so $\{u_p(s_i)\}_{i \in \N}$ is a strictly decreasing sequence with limit $0$. By Lemma \ref{Extensions to Qbar lemma 2}, $\IntQ(S, \overline{\Z_{(p)}}) = \Z_{(p)}[X]$.

Let now $q$ be a prime different from $p$. Given  $f\in\IntQ(S,\oZ)$, by Lemma \ref{c^3 polys}(5), there exists a prime $q_i$ such that $[\Q(s_i):\Q]=q_i>\deg(f)$ and $q\nmid\iota_{s_i}$. By Lemma \ref{Psi and index}(3), $\iota_{s_i}\cdot f\in\Z[X]$ so that $f$ belongs to $\Z_{(q)}[X]$. This argument shows that $\IntQ(S,\overline{\Z_{(q)}})=\Z_{(q)}[X]$ for every prime $q\not=p$.

By \eqref{globallocal}, we conclude that $\IntQ(S,\oZ)=\Z[X]$.
\end{Ex}

In the following examples, for each $p\in\PP$, we fix an extension $u_p$ of $v_p$ to $\oQ$ with corresponding valuation domain $U_p$ having maximal ideal $\oMp$. For each positive integer $n$, let $\zeta_n$ be a primitive $n^\text{th}$ root of unity. Sequences of these algebraic integers can lead to interesting examples of sets $S$ for which $\IntQ(S,\oZ)$ is trivial. In the next three examples, we will construct a sequence $S\subseteq\oZ$ that is pseudo-divergent with $\Br(S)=\oMp$ with respect to $u_p$ for a single prime $p$, but pseudo-stationary with $\Br(S)=U_q$ at all other primes $q$; a sequence $S$ that is pseudo-stationary with $\Br(S)=U_p$ with respect to all primes; and a sequence $S$ which is eventually pseudo-divergent with $\Br(S)=\oMp$ with respect to each prime.

\begin{Ex}\label{Root of unity example}
Let $p \in \PP$ and take $S=\{\zeta_{p^k}\}_{k\in\N}$. Then, $\IntQ(S,\oZ)$ is trivial by Proposition \ref{index trivial conditions}(1). We claim that $S$ is a pseudo-divergent sequence with $\Br(S)=M_p$ with respect to $u_p$ and it is pseudo-stationary with $\Br(S)=U_q$ with respect to $u_q$ for every prime $q\not=p$.

Whenever $j < k$, we have $\zeta_{p^k}-\zeta_{p^j}=\zeta_{p^k}(1-\zeta_{p^k}^{m})$, where $m=p^{k-j}-1$. Since $m$ and $p^k$ are coprime, $\zeta_{p^k}^m$ is also a primitive $p^k$-th root of unity. So,
\begin{equation*}
u_q(\zeta_{p^k}-\zeta_{p^j})=u_q(1-\zeta_{p^k}^m) = u_q(1-\zeta_{p^k}).
\end{equation*}
We recall that the prime $p$ is totally ramified in $\Q(\zeta_{p^k})$ for every $k\in\N$ (see for example \cite[Theorem 26]{Marcus}), and  $u_p(1-\zeta_{p^{k}})=(p^{k-1}(p-1))^{-1}$ by \cite[Chapter 2, p.\ 9]{Washington}. Moreover, if $q$ is a prime different from $p$ then $u_q(1-\zeta_{p^{k}})=0$. These calculations prove the claims about $S$. 
%is pseudo-divergent with respect to $u_p$ but pseudo-stationary with respect to each $u_q$.
\end{Ex}

\begin{Ex}\label{Root of unity example 2} Let $S = \{\zeta_p\}_{p \in \PP}$. Once again, $\IntQ(S,\oZ)$ is trivial by Proposition \ref{index trivial conditions}. We will show that $S$ is pseudo-stationary with $\Br(E)=U_p$ with respect to $u_p$ for every prime $p$.

Given primes $p < q$, we have $\zeta_q - \zeta_p = \zeta_q(1 - \zeta_{pq}^{q-p})$, and $\zeta_{pq}^{q-p}$ is a primitive $pq$-th root of unity because $q-p$ is coprime to $pq$. So, for any prime $l$,
\begin{equation*}
u_l(\zeta_q - \zeta_p) = u_l(1 - \zeta_{pq}^{q-p}) = u_l(1 - \zeta_{pq}).
\end{equation*}
By \cite[Proposition 2.8]{Washington}, $1-\zeta_{pq}$ is a unit of $\Z[\zeta_{pq}]$. Hence,  $u_l(\zeta_q - \zeta_p) = 0$, and so for every prime $l$, $S$ is pseudo-stationary with respect to $u_l$ with breadth ideal $U_l$.
\end{Ex}

Note that in both Example \ref{Root of unity example} and Example \ref{Root of unity example 2}, one could use Lemma \ref{Extensions to Qbar lemma 2} instead of Proposition \ref{index trivial conditions} to conclude that $\IntQ(S,\oZ)$ is trivial.

It is not possible for a sequence $S = \{s_k\}_{k \in \N} \subseteq \oZ$ to be pseudo-divergent with respect to all primes. If that were the case, then for all $i \ne j$, the difference $s_i-s_j$ would have finite positive value at each integral prime, which is impossible. However, we can construct a sequence $S$ that is \textit{eventually} pseudo-divergent with respect to each prime.

\begin{Ex}\label{Pseudo-divergent example}
Let $\PP=\{p_1, p_2, \ldots\}$. Define $s_1 = p_1$, $s_2 = (p_1p_2)^{1/2}$, $s_3 = (p_1p_2p_3)^{1/3}$, and in general $s_k = (p_1 \cdots p_k)^{1/k}$ for each $k \in \N$. 
%As before, we fix an extension $u_{p_n}$ of $v_{p_n}$ to $\oQ$, for each $n\in\N$. 
Then, for each $n \in \N$,
\begin{equation*}
u_{p_n}(s_k) = \begin{cases} 0, & k < n\\ \tfrac{1}{k}, & k \geq n.\end{cases}
\end{equation*}
Thus, for every prime $p$, the sequence $\{u_p(s_k)\}_{k \in \N}$ eventually strictly decreases to 0. Hence, $S$ is eventually pseudo-divergent with breadth ideal equal to $\oMp$  with respect to every prime $p$. As in the prior two examples, $\IntQ(S,\oZ)$ is trivial.
\end{Ex}

%\subsection{Unbounded Sets with Nontrivial Integral-valued Polynomial Ring}
\section{Ramification Indices and Residue Field Degrees}\label{e and f section}

We first show that having bounds on both the ramification indices and residue field degrees of elements of $S$ is sufficient for $\IntQ(S,\oZ)$ to be nontrivial. Similar conditions appear in \cite{LopIntD} in a theorem which classifies the almost Dedekind domains $D$ with finite residue fields such that $\Int(D)$ is a Pr\"{u}fer domain.

\begin{Lem}\label{Bounded ram and res field}
Let $S \subseteq \oZ$. Assume that there exists $p \in \PP$ such that $S$ has both bounded ramification indices and bounded residue field degrees at $p$. That is, assume that there exist $e_0, f_0 \in \N$ such that for all $s \in S$ and every prime $P_s$ of $O_{\Q(s)}$ above $p$, we have $e(P_s|p) \leq e_0$ and $f(P_s|p) \leq f_0$. Then, both $\IntQ(S,\oZ)$ and $\IntQ(S,\overline{\Z_{(p)}})$ are nontrivial.
\end{Lem}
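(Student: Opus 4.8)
The plan is to reduce to the local situation handled by Theorem~\ref{Int(S,V) nontrivial V1} and produce the finite covering by balls required by condition~(2) of that theorem. By Theorem~\ref{Big thm local case}, it suffices to show that $\Int_{\oQp}(\Sigma_p(S),\oZp)$ is nontrivial, and by Corollary~\ref{Big cor local case} this amounts to verifying one of the equivalent conditions of Theorem~\ref{Int(S,V) nontrivial V1} for the set $\Sigma_p(S)\subseteq\oZp$. So I would work entirely inside $\oZp$ with the valuation $v$ extending $v_p$, and aim to check condition~(2): a finite subset $T\subseteq\Sigma_p(S)$ and $\delta>0$ such that every element of $\Sigma_p(S)$ is within $v$-distance $\delta$ of some $t\in T$.

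The key point is to translate the hypotheses on ramification indices and residue field degrees into a uniform bound on the local degree $[\Q_p(\alpha):\Q_p]$ for $\alpha\in\Sigma_p(S)$. If $\alpha\in\Sigma_p(S)$, then $\alpha$ is a root of the minimal polynomial over $\Q$ of some $s\in S$, and $\Q_p(\alpha)$ is (isomorphic to) one of the completions $\Q(s)_{P_s}$ at a prime $P_s$ above $p$; hence $[\Q_p(\alpha):\Q_p]=e(P_s|p)f(P_s|p)\leq e_0 f_0=:N$. Thus every element of $\Sigma_p(S)$ has degree at most $N$ over $\Q_p$, i.e. $\Sigma_p(S)\subseteq\mathcal{B}$, where $\mathcal{B}$ is the set of elements of $\oZp$ of degree $\leq N$ over $\Q_p$. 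Now I would invoke the classical finiteness of extensions of $\Q_p$ of bounded degree (Krasner's lemma / the fact that $\Q_p$ has only finitely many extensions of each degree inside $\oQp$): there are finitely many fields $\Q_p=L_0,L_1,\dots,L_r$ with $[L_j:\Q_p]\leq N$, and each ring of integers $O_{L_j}$ is a compact DVR, hence its maximal ideal $\mathfrak{m}_{L_j}$ has finite index, so $O_{L_j}/p O_{L_j}$ is finite. The union $\bigcup_j O_{L_j}$ therefore has the property that modulo $p$ it takes only finitely many values; more precisely, choosing $b=p$, the set $\mathcal{B}/p\oZp$ is finite because $\mathcal{B}\subseteq\bigcup_j O_{L_j}$ and each $O_{L_j}/pO_{L_j}$ is finite (note $p\oZp\cap O_{L_j}=pO_{L_j}$). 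Consequently $\Sigma_p(S)/p\oZp$ is finite, which is exactly condition~(3) of Theorem~\ref{Int(S,V) nontrivial V1} with $b=p\in M$. Hence $\Int_{\oQp}(\Sigma_p(S),\oZp)$ is nontrivial, and Theorem~\ref{Big thm local case} gives that $\IntQ(S,\oZ)$ is nontrivial; the nontriviality of $\IntQ(S,\overline{\Z_{(p)}})$ follows from the same chain of equivalences (Proposition~\ref{1 iff 2} together with Proposition~\ref{2 iff 3} and Proposition~\ref{nontriviality Galois conjugate}), or directly since $\IntQ(S,\overline{\Z_{(p)}})=\IntQ(\Sigma_p(S),\oZp)$ by Proposition~\ref{IntAnpcompletion} and the latter is nontrivial over $\Q_p$-coefficients.

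The main obstacle I anticipate is the bookkeeping in the identification $\Q_p(\alpha)\cong\Q(s)_{P_s}$ and the resulting degree bound: one must be careful that an element $\alpha$ of $\Sigma_p(S)$ is a root of $\min_\Q(s)$ viewed in $\oQp$, that each such root corresponds (via a $\Q$-embedding $\Q(s)\hookrightarrow\oQp$) to a choice of prime $P_s$ above $p$, and that under this correspondence the completion $\Q_p(\alpha)$ has degree $e(P_s|p)f(P_s|p)$ over $\Q_p$. Once that identification is in place, the rest is the standard compactness/finiteness input for $p$-adic fields, and the verification of condition~(3) with $b=p$ is immediate. A minor point to check is that $p$ itself (rather than some element of $\overline{M_p}$) suffices as the modulus $b$; this is fine because $v(p)=1>0$ and $pO_{L_j}$ already has finite index in each $O_{L_j}$.
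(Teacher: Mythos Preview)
Your argument is correct, but the paper proves this lemma by a much more direct and elementary route that avoids the machinery of Sections~\ref{Val dom section}--\ref{Local section} entirely. The paper simply writes down an explicit polynomial: setting $q=p^{f_0!}$, the residue field $O_{\Q(s)}/P_s$ embeds in $\F_q$ for every prime $P_s$ above $p$, so $X^q-X$ maps $O_{\Q(s)}$ into $P_s$; raising to the $e_0$-th power lands in $P_s^{e_0}\subseteq P_s^{e(P_s|p)}$, and intersecting over all $P_s\mid p$ gives $pO_{\Q(s)}$. Hence $(X^q-X)^{e_0}/p\in\IntQ(S,\oZ)\setminus\Z[X]$.

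Your approach instead pushes everything into $\oZp$, bounds $[\Q_p(\alpha):\Q_p]\leq e_0f_0$ for $\alpha\in\Sigma_p(S)$, invokes Krasner-type finiteness of extensions of $\Q_p$ of bounded degree, and checks condition~(3) of Theorem~\ref{Int(S,V) nontrivial V1} with $b=p$. This is a legitimate proof and has the conceptual merit of showing exactly how the lemma sits as a special case of the general criterion: bounded $e$ and $f$ force $\Sigma_p(S)$ into finitely many local fields, hence finitely many residues mod $p$. The paper's version, by contrast, is self-contained (no Krasner, no Theorem~\ref{Big thm local case}), constructive, and gives a concrete polynomial of degree $e_0\,p^{f_0!}$---which is useful later when one wants explicit elements or degree bounds. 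Your ``main obstacle'' (the identification $\Q_p(\alpha)\cong\Q(s)_{P_s}$) is handled correctly; the remaining bookkeeping about $p\oZp\cap O_{L_j}=pO_{L_j}$ is fine as stated.
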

\begin{proof}
Fix $s \in S$, and let $P_s$ be a prime of $O_{\Q(s)}$ above $p$. Let $q=p^{f_0!}$. Then, $O_{\Q(s)}/P_s$ is a subfield of $\F_{q}$, so $X^q-X$ maps $O_{\Q(s)}$ into $P_s$ under evaluation for every $s\in S$. Since all ramification indices over $p$ are bounded above by $e_0$, it follows that $(X^q-X)^{e_0}$ sends $O_{\Q(s)}$ into $pO_{\Q(s)}$. Because the values of $q$ and $e_0$ are independent of $s$, this is true for all elements of $S$. Thus, the polynomial $((X^q-X)^{e_0})/p$ is in $\IntQ(S,\oZ) \subseteq \IntQ(S,\overline{\Z_{(p)}})$, and both rings are nontrivial.
\end{proof}
 
The hypothesis of Lemma \ref{Bounded ram and res field} is clearly met when $S$ is of bounded degree. However, there exist sets $S$ of unbounded degree for which Lemma \ref{Bounded ram and res field} can be applied.

\begin{Ex}\label{Qn}
Let $\Q^{(n)}=\Q(\mathcal A_n)$ be the compositum in $\oQ$ of all number fields of degree bounded by $n$ and let $O_{\Q^{(n)}}$ be the ring of integers of $\Q^{(n)}$. It is known that there exists a global bound on both ramification indexes and residue field degrees of valuations of $\Q^{(n)}$. That is, there exists $N\in\N$ such that if $u_p$ is a valuation of $\Q^{(n)}$ extending some $v_p$, $p\in\PP$, then $e(u_p|v_p)\leq N$ and $f(u_p|v_p)\leq N$ \cite[Proposition 4.5.3, p. 118]{BG}; we stress that $N$ is independent from $p$. So, any subset $S \subseteq O_{\Q^{(n)}}$ meets the conditions of Lemma \ref{Bounded ram and res field}, and hence $\IntQ(S,\oZ)$ is nontrivial. Furthermore, subsets of $O_{\Q^{(n)}}$ may have unbounded degree. For an explicit example, let $\PP=\{p_k\}_{k\in\N}$. For each $k\in\N$, let $s_k=\sum_{i=1}^k\sqrt{p_i}$, and take $S=\{s_k\}_{k\in\N}$. Then, $S \subseteq O_{\Q^{(2)}}$ and for each $k$, $[\Q(s_k):\Q] = 2^k$.
\end{Ex}

As mentioned earlier, the double-boundedness condition of Lemma \ref{Bounded ram and res field} was used in \cite{LopIntD} to classify the almost Dedekind domains $D$ with finite residue fields such that $\Int(D)$ is Pr\"{u}fer. We do not know whether this condition guarantees that $\IntQ(S,\oZ)$ is Pr\"{u}fer. However, we can prove that this is the case when $S \subseteq O_{\Q^{(n)}}$.

\begin{Prop}\label{Qn Prufer prop}
Let $n \in \N$. Define $\Q^{(n)}$ and $O_{\Q^{(n)}}$ as in Example \ref{Qn}, and let $S \subseteq O_{\Q^{(n)}}$. Then, $\IntQ(S,\oZ)$ is a Pr\"{u}fer domain.
\end{Prop}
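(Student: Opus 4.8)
The plan is to realize $\IntQ(S,\oZ)$ as the integral closure of $\Int(O_{\Q^{(n)}})$ inside $\Q[X]$, and then invoke the fact (already used implicitly in the paper, and proven in \cite{LopWer} for the $\IntQ(\An)$ case) that $\Int(O_{\Q^{(n)}})$ is a Pr\"ufer domain. Since an integral domain that is the integral closure of a Pr\"ufer domain in an algebraic extension of its fraction field is again Pr\"ufer, this would give the result. More concretely: first I would show $\IntQ(S,\oZ) = \Int_{\Q}(S, O_{\Q^{(n)}})$ for $S \subseteq O_{\Q^{(n)}}$; this is because for $f \in \Q[X]$ and $s \in S$, the value $f(s)$ lies in $\Q^{(n)}$, so $f(s) \in \oZ$ if and only if $f(s) \in \oZ \cap \Q^{(n)} = O_{\Q^{(n)}}$. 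Next, I would show $\Int_{\Q}(O_{\Q^{(n)}}, O_{\Q^{(n)}}) \subseteq \Int_{\Q}(S,O_{\Q^{(n)}})$ is an integral (in fact module-finite in each bounded degree) extension; the standard argument is that $\Psi_{p,n}/p$-type polynomials, or more precisely the fact that $O_{\Q^{(n)}}$ has globally bounded ramification $N$ and residue degrees $N$ (Example \ref{Qn}), forces every $f \in \Int_{\Q}(S,O_{\Q^{(n)}})$ to satisfy a monic equation over $\Int_{\Q}(O_{\Q^{(n)}}, O_{\Q^{(n)}})$, via a product of translates argument.

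The cleanest route, which I would actually pursue, is to bypass $O_{\Q^{(n)}}$-valued polynomials and argue directly that $\IntQ(S,\oZ)$ is Pr\"ufer by checking it locally: a domain $R$ with fraction field $\Q(X)$ is Pr\"ufer iff $R_{\mathfrak{m}}$ is a valuation domain for every maximal ideal $\mathfrak{m}$. By Proposition \ref{localization IntQSZ V1}, $\IntQ(S,\oZ)_{(p)} = \IntQ(S,\oQZp)$, so it suffices to show each $\IntQ(S,\oQZp)$ is Pr\"ufer. By Proposition \ref{IntAnpcompletion} and Proposition \ref{nontriviality Galois conjugate}, this ring is closely tied to $\Int_{\oQp}(\Sigma_p(S),\oZp)$, an integer-valued polynomial ring over the rank-one valuation domain $\oZp$; and $\Sigma_p(S)$ consists of algebraic integers lying in extensions of $\Q_p$ of ramification and residue degree bounded by $N$ (inherited from the global bound for $\Q^{(n)}$). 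For such a set $S' \subseteq \oZp$ contained in a fixed finite union of balls of fixed positive radius — which is exactly the double-boundedness translating into condition (2) of Theorem \ref{Int(S,V) nontrivial V1} — the ring $\Int(S',\oZp)$ is known to be Pr\"ufer; this is the content of Loper's theorem \cite{LopIntD} and its valuation-domain analogues. So I would assemble: global bound on $e,f$ over $\Q^{(n)}$ $\Rightarrow$ bounded $e,f$ over $\Q_p$ for all $p$ $\Rightarrow$ Loper's double-boundedness $\Rightarrow$ each local ring Pr\"ufer $\Rightarrow$ $\IntQ(S,\oZ)$ Pr\"ufer.

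The main obstacle I anticipate is descending the Pr\"ufer property through the various ring identifications and the coefficient-field change between $\Q_p$ and $\oQp$: $\IntQ(S,\oQZp)$ has fraction field $\Q_p(X)$, whereas $\Int_{\oQp}(\Sigma_p(S),\oZp)$ has fraction field $\oQp(X)$, and Pr\"ufer is not automatically preserved under such a change of base field. The paper's Proposition \ref{nontriviality Galois conjugate} (via \cite[Lemma 2.20]{PerDedekind}) handles the integral-closure relationship between these two rings, and the standard fact that the integral closure of a Pr\"ufer domain in an algebraic field extension is Pr\"ufer should let me transfer the property back down; I would need to check carefully that $\Int_{\oQp}(\Sigma_p(S),\oZp)$ is indeed the integral closure of $\IntQ(S,\oQZp) = \Int_{\Q_p}(\Sigma_p(S),\oZp) \cap \Q[X]$ in $\oQp(X)$ and that it is Pr\"ufer by the valuation-domain version of Loper's criterion. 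An alternative, perhaps simpler, endgame is to note that $O_{\Q^{(n)}}$ is itself an almost Dedekind domain with finite residue fields satisfying the double-boundedness condition, so $\Int(O_{\Q^{(n)}})$ is Pr\"ufer by \cite{LopIntD} directly, and then $\IntQ(S,\oZ)$, being an overring-type extension ($\Int_{\Q}(S,O_{\Q^{(n)}}) \supseteq \Int_{\Q}(O_{\Q^{(n)}})$ with the same fraction field), inherits the Pr\"ufer property since overrings of Pr\"ufer domains are Pr\"ufer — this requires checking the fraction fields coincide, which they do as soon as $S$ has unbounded degree, and can be arranged in general by adjoining finitely many generators.
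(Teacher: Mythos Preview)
Your third ``alternative endgame'' is exactly the paper's proof, but you misplace the gap and then skip the step that actually needs work.

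The worry you raise---whether $\IntQ(O_{\Q^{(n)}})$ and $\IntQ(S,\oZ)$ have the same fraction field---is a non-issue: both rings contain $\Z[X]$ and sit inside $\Q[X]$, so both have fraction field $\Q(X)$ regardless of whether $S$ has unbounded degree. The overring argument $\IntQ(O_{\Q^{(n)}}) \subseteq \IntQ(S,\oZ)$ therefore goes through without any hypothesis on $S$.

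The step you do not address is the descent from $\Int(O_{\Q^{(n)}})$ to $\IntQ(O_{\Q^{(n)}})$. Loper's theorem gives you that $\Int(O_{\Q^{(n)}}) = \{f \in \Q^{(n)}[X] \mid f(O_{\Q^{(n)}}) \subseteq O_{\Q^{(n)}}\}$ is Pr\"ufer, but this ring has fraction field $\Q^{(n)}(X)$, not $\Q(X)$; it is not an overring of $\IntQ(O_{\Q^{(n)}})$, and $\IntQ(S,\oZ)$ is not an overring of it. Your first paragraph even gets the direction of the integral closure backwards: $\IntQ(S,\oZ)$ cannot be the integral closure of $\Int(O_{\Q^{(n)}})$ inside $\Q[X]$, since $\Int(O_{\Q^{(n)}}) \not\subseteq \Q[X]$. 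The correct relationship, which the paper invokes via \cite[Lemma 2.20]{PerDedekind}, is that $\Int(O_{\Q^{(n)}})$ is the integral closure of $\IntQ(O_{\Q^{(n)}})$ in $\Q^{(n)}(X)$. One then applies \cite[Theorem 22.4]{Gilm}: if the integral closure of an integrally closed domain $A$ in an algebraic extension of its fraction field is Pr\"ufer, then $A$ is Pr\"ufer. This yields $\IntQ(O_{\Q^{(n)}})$ Pr\"ufer, and now the overring argument finishes. You gesture at exactly this mechanism in your second paragraph (for the local setting), so you have the ingredients; you just need to apply them at the right place.
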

\begin{proof}
For readability, let $K = \Q^{(n)}$ and $O = O_{\Q^{(n)}}$. As usual, we let $\Int(O) = \{f \in K[X] \mid f(O) \subseteq O\}$ and $\IntQ(O) = \Int(O) \cap \Q[X]$. Then,
\begin{equation*}
\IntQ(O) = \IntQ(O,\oZ) \subseteq \IntQ(S,\oZ).
\end{equation*}
Since overrings of Pr\"{u}fer domains are Pr\"{u}fer, it suffices to prove that $\IntQ(O)$ is Pr\"{u}fer. By the properties of ramification indexes and residue field degree mentioned in Example \ref{Qn}, it follows that $O$ is an almost Dedekind domain with finite residue fields satisfying the double-boundedness condition of \cite[Theorem 2.5]{LopIntD}. Hence, $\Int(O)$ is Pr\"{u}fer.  By \cite[Lemma 2.20]{PerDedekind}, $\Int(O)$ is integral over $\IntQ(O)$, so $\IntQ(O)$ is Pr\"ufer by \cite[Theorem 22.4]{Gilm}. %Moreover, $O_{\Q^{(n)}}$ is not Noetherian because has finite residue fields everywhere (see \cite[Theorem 42.3]{Gilm}). 
\end{proof}

\begin{Rem}\label{Qn remark}
For each $n \in \N$, we have $\IntQ(O_{\Q^{(n)}})\subseteq\IntQ(\mathcal{A}_n)$. As in \eqref{An inclusion chain}, we have a descending chain of Pr\"{u}fer domains:
\begin{equation*}
\ldots\subseteq\IntQ(O_{\Q^{(n+1)}})\subseteq\IntQ(O_{\Q^{(n)}})\subseteq\ldots\subseteq\IntQ(O_{\Q^{(1)}})=\Int(\Z).
\end{equation*}
It is not known whether each containment in this train is strict. Regardless, as was shown in the Introduction for the rings $\IntQ(\mathcal{A}_n), n\in\N$, we have $\bigcap_{n\in\N}\IntQ(O_{\Q^{(n)}})=\IntQ(\oZ)=\Z[X]$.

\end{Rem}

While bounds on both ramification indices and residue field degrees are sufficient to conclude that $\IntQ(S,\oZ)$ is nontrivial by Lemma \ref{Bounded ram and res field}, neither condition is necessary for nontriviality. The remainder of this section is devoted to examples that illustrate this. We also consider how to interpret the presence of either unbounded residue field degrees (respectively, ramification degrees) or in terms of pseudo-stationary (respectively, pseudo-divergent) sequences in suitable images of $S$ in $\oZp$.

\begin{Ex}\label{Unbounded ram lemma}
Let $S \subseteq \oZ$ and $p \in \PP$. Let $I$ be the set of all ramification indices $e(P_s|p)$, where $s$ runs through $S$ and $P_s$ runs through the prime ideals of $O_{\Q(s)}$ above $p$. We give two examples to show that if $I$ is unbounded, then $\IntQ(S,\oZ)$ may or may not be trivial.

First, let $S$ be as in Example \ref{nth roots of p example}. The associated set of ramification indices is $I = \{2^k \mid k \in \N\}$, which is unbounded. However, $X^2/p \in \IntQ(S,\oZ)$, so the ring of integral-valued polynomials is nontrivial. By contrast, if $S$ is the set defined in Example \ref{Pseudo-divergent example}, then $S$ still exhibits unbounded ramification indices, but $\IntQ(S,\oZ)$ is trivial.
\end{Ex}

As for the situation of ramification indices, we can show that the bound of the residue field degrees is not necessary for the $\IntQ$-ring to be nontrivial.

\begin{Ex}
    For a fixed prime $p\in\PP$, we consider the $m^{\text{th}}$ roots of unity in $\oZ$ with $(p,m)=1$; we recall that $p$ is not ramified in $\Q(\zeta_m)$ \cite[Corollary of Theorem 26]{Marcus}. The set of residue field degrees of the set of prime ideals of the finite extensions generated by the elements of the set $S=\{p\cdot\zeta_m\mid m\in\N, (p,m)=1\}$ is unbounded but $u_p(p\cdot\zeta_m)>0$ for each of the relevant $m$'s.  Clearly, $\frac{X}{p}\in\IntQ(S,\oZ)$. The important condition here is that the residue set $S/\oP$, for each prime ideal $\oP\subset\oZ$ above $p$, is finite.
\end{Ex}

\begin{Not}
For $p \in \PP$, we denote by $\mcP_p$ the set of prime ideals $\oP$ of $\oZ$ lying above $p$.
\end{Not}

\begin{Lem}\label{Unbounded res field deg lem}
Let  $S\subseteq\overline{\Z_{(p)}}$ for some  $p\in\PP$. Suppose there exists $n\in\N$ such that either one of the following conditions is satisfied:
\begin{itemize}
    \item[(i)] there exists some $\oP\in\mathcal{P}_p$ such that $\# S/\oP>n$.
    \item[(ii)] there exist some $\oP\in\mathcal{P}_p$ and $s\in S$ such that $0<v_{\oP}(s)<\frac{1}{n}$.
  %  \item[ii')] there exist some $\oP\in\mathcal{P}_p$ and $s,s'\in S$ such that $v_{\oP}(s-s')<\frac{1}{n}$.
\end{itemize}
Then, given $f\in\IntQ(S,\overline{\Z_{(p)}})\setminus\Z_{(p)}[X]$, we have $\deg(f)>n$. In particular, if for every $n\in\N$ either one of these conditions hold, then $\IntQ(S,\overline{\Z_{(p)}})=\Z_{(p)}[X]$.
\end{Lem}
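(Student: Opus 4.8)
The plan is to prove the contrapositive of the first assertion: if there exists $f \in \IntQ(S,\overline{\Z_{(p)}})$ with $\deg(f) \leq n$ but $f \notin \Z_{(p)}[X]$, then neither (i) nor (ii) can hold. Equivalently, I will show that each of the two conditions forces every $f \in \IntQ(S,\overline{\Z_{(p)}})$ of degree $\leq n$ to lie in $\Z_{(p)}[X]$. The key tool is to relate $\IntQ(S,\overline{\Z_{(p)}})$ to valuation-theoretic data via the primes $\oP \in \mcP_p$, using that $\overline{\Z_{(p)}} = \bigcap_{\oP \in \mcP_p} (\overline{\Z_{(p)}})_{\oP}$, where each $(\overline{\Z_{(p)}})_{\oP}$ is a valuation domain with valuation $v_{\oP}$.

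For condition (ii): suppose there exist $\oP \in \mcP_p$ and $s \in S$ with $0 < v_{\oP}(s) < \tfrac{1}{n}$, and take $f \in \IntQ(S,\overline{\Z_{(p)}})$ with $\deg(f) = d \leq n$. Write $f(X) = \sum_{i=0}^{d} a_i X^i$ with $a_i \in \Q$. I want to conclude $v_p(a_i) \geq 0$ for all $i$. The idea is a standard Newton-polygon / minimal-valuation argument at the valuation $v_{\oP}$: if some $a_i$ has negative $p$-adic valuation, pick $j$ minimizing $v_p(a_j) + j \cdot v_{\oP}(s)$ among those $i$ with $a_i \neq 0$; because the "slope" $v_{\oP}(s)$ is strictly less than $\tfrac{1}{n} \leq \tfrac{1}{d}$ (so the quantities $v_p(a_i) + i v_{\oP}(s)$ for distinct indices $i \in \{0,\dots,d\}$ are spread by strictly less than $1$ in their "$i$-contribution" while the $v_p(a_i) \in \tfrac{1}{?}\Z$ — more carefully, one uses that the $v_p(a_i)$ are integers and $i v_{\oP}(s) < 1$) the minimum is achieved uniquely, and hence $v_{\oP}(f(s)) = v_p(a_j) + j v_{\oP}(s)$. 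If $v_p(a_j) < 0$ then $v_p(a_j) \leq -1$, so $v_{\oP}(f(s)) \leq -1 + j v_{\oP}(s) < -1 + 1 = 0$, contradicting $f(s) \in \overline{\Z_{(p)}}$. Thus all $a_i \in \Z_{(p)}$, i.e. $f \in \Z_{(p)}[X]$.

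For condition (i): suppose $\# S/\oP > n$ for some $\oP \in \mcP_p$, and take $f \in \IntQ(S,\overline{\Z_{(p)}})$ with $\deg(f) = d \leq n$. Pick representatives $s_0, \dots, s_n \in S$ lying in $n+1 > d$ distinct residue classes mod $\oP$; then $v_{\oP}(s_i - s_j) = 0$ for $i \neq j$ (they differ by a $\oP$-unit since they are in distinct classes, both being in the valuation domain). The residue field $(\overline{\Z_{(p)}})_{\oP}/\oP(\overline{\Z_{(p)}})_{\oP} = \overline{\F_p}$ contains these $n+1$ distinct images; a standard finite-differences / Vandermonde argument (or reduction mod $\oP$ of the Lagrange interpolation formula on $s_0,\dots,s_d$) shows that a polynomial of degree $\leq d < n+1$ that is integer-valued on $n+1$ points in distinct residue classes must have all coefficients in the valuation ring $(\overline{\Z_{(p)}})_{\oP}$, hence $v_p(a_i) \geq 0$ for all $i$, giving $f \in \Z_{(p)}[X]$. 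Concretely, writing the $d+1$ equations $f(s_k) = \sum_i a_i s_k^i \in (\overline{\Z_{(p)}})_{\oP}$ for $k = 0,\dots,d$ and solving for the $a_i$ by Cramer's rule, the determinant in the denominator is the Vandermonde $\prod_{i<j}(s_j - s_i)$, which is a $\oP$-unit, so each $a_i \in (\overline{\Z_{(p)}})_{\oP}$ as well; intersecting with $\Q$ gives $a_i \in \Z_{(p)}$.

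The final sentence of the statement is then immediate: if for every $n$ one of (i), (ii) holds, then every element of $\IntQ(S,\overline{\Z_{(p)}})$ has degree $\leq n$ for no valid choice once it is outside $\Z_{(p)}[X]$ — more precisely, a nonzero $f \in \IntQ(S,\overline{\Z_{(p)}}) \setminus \Z_{(p)}[X]$ would have to satisfy $\deg(f) > n$ for every $n$, which is absurd; hence $\IntQ(S,\overline{\Z_{(p)}}) = \Z_{(p)}[X]$. I expect the main obstacle to be the bookkeeping in the Newton-polygon argument for (ii): one must verify carefully that the strict inequality $v_{\oP}(s) < \tfrac{1}{n}$ together with integrality of the $v_p(a_i)$ genuinely forces a unique minimizing term (this is where the hypothesis $\deg f \leq n$ is used), and handle the case $d < n$ versus $d = n$ uniformly. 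The argument for (i) is more routine but requires noting that distinct residue classes mod $\oP$ give differences of valuation exactly $0$, and that the Vandermonde determinant being a unit lets one invert the interpolation system over the valuation ring.
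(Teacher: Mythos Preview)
Your argument is correct and takes a somewhat different route from the paper's. The paper first normalizes (via a forward reference to the regular-basis proposition) to the case $f=g/p$ with $g\in\Z_{(p)}[X]$ monic of degree $d\leq n$. For (i) it then reduces $g$ modulo $\oP$: the $n+1$ distinct residues $\overline{s_0},\dots,\overline{s_n}\in\overline{\F_p}$ are all roots of $\overline{g}$, which is monic of degree $\leq n$, a contradiction. For (ii) it factors $g(X)=\prod_{i=1}^d(X-\alpha_i)$ over $\oQ$ and observes that each $\alpha_i$ has degree $\leq n$, so $v_{\oP}(\alpha_i)\in\{0\}\cup[\tfrac{1}{n},\infty)$; since $0<v_{\oP}(s)<\tfrac{1}{n}$, each factor satisfies $v_{\oP}(s-\alpha_i)\in\{0,v_{\oP}(s)\}$, whence $v_{\oP}(g(s))\leq d\,v_{\oP}(s)<1=v_{\oP}(p)$, contradicting $f(s)\in\overline{\Z_{(p)}}$. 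Your approach bypasses the monic normalization entirely: for (i) you invert the Vandermonde system over the valuation ring $\oZ_{\oP}$ (using that $\prod_{i<j}(s_j-s_i)$ is a $\oP$-unit) to get $a_i\in\oZ_{\oP}\cap\Q=\Z_{(p)}$ directly; for (ii) you argue at the level of coefficients that, because the $v_p(a_i)$ are integers while $i\,v_{\oP}(s)\in[0,1)$ for $0\leq i\leq d\leq n$, the minimum of $v_p(a_i)+i\,v_{\oP}(s)$ is uniquely attained, and is negative if any $v_p(a_i)\leq -1$. Both approaches are short; yours has the mild advantage of being self-contained (no forward reference needed for the normalization), while the paper's root-factorization in (ii) makes transparent \emph{why} the bound $\tfrac{1}{n}$ is the right threshold, namely because roots of polynomials of degree $\leq n$ over $\Q$ cannot have $\oP$-valuation in $(0,\tfrac{1}{n})$.
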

\begin{proof}
Let $f\in\IntQ(S,\overline{\Z_{(p)}})\setminus\Z_{(p)}[X]$ be of degree $d\leq n$; without loss of generality, we may assume that $f=\frac{g}{p}$ for some monic $g\in\Z_{(p)}[X]$ (see the arguments of the proof of Proposition \ref{Regular basis} for the monic assumption; we may multiply $f$ by a suitable power of $p$ in order to get $p$ at the denominator). 
\vskip0.2cm
(i)   By assumption, there exists a prime ideal $\oP\in\mathcal P_p$ such that $\#S/\oP>n$, that is, there exist $s_0,\ldots,s_n\in S$ such that $u_{\oP}(s_i-s_j)=0$ for each $0\leq i<j\leq n$. In particular, $\overline{s_i}=s_i\pmod{\oP}$ are $n+1$ distinct elements of $\overline{\F_p}$. Then $g(s_i)\equiv0\pmod \oP$, for $i=0,\ldots,n$, that is, $\overline{g}(\overline{s_i})=0$ in $\overline{\F_p}$, where $\overline{g}\in\F_p[X]$ is the reduction modulo $p$ of $g$. It follows that  $\overline{g}\in\F_p[X]$ is a polynomial of degree $\leq n$ which has $n+1$ distinct roots in $\overline{\F_p}$, a contradiction.
   
\vskip0.3cm
(ii) By assumption, there exists a prime ideal $\oP\in\mathcal{P}_p$ and $s\in S$ such that $v_{\oP}(s)<\frac{1}{n}$. Let  $g(X)=\prod_{i=1}^d(X-\alpha_i)$ over $\oQ$, where each $\alpha_i \in \An$. Fix $i\in\{1,\ldots,n\}$. If $v_{\oP}(\alpha_i)$ is not $0$ then it is equal to $\frac{a}{n}>0$, for some $a\in\N$ (not necessarily coprime with $n$) since $e(\oP\cap O_{\Q(\alpha_i)}\mid p)\leq [\Q(\alpha_i):\Q]\leq n$. If this second case occurs, then $0<v_{\oP}(s)<\frac{1}{n}\leq v_{\oP}(\alpha_i)$. In either case we have:
$$v_{\oP}(s-\alpha_i)=\min\{v_{\oP}(\alpha_i),v_{\oP}(s)\}=
\begin{cases}
0,&\text{ if }v_{\oP}(\alpha_i)=0\\
v_{\oP}(s),&\text{ otherwise}.
\end{cases}$$
Hence, $v_{\oP}(g(s))\leq \sum_{i=1}^n v_{\oP}(s)\leq nv_{\oP}(s)<1=v_{\oP}(p)$ which contradicts the fact that $f(s)\in\overline{\Z_{(p)}}\subset\oZ_{\oP}$.
\end{proof}

\begin{Ex}\label{Gilmer example}
For an explicit example of a subset $S$ of $\oZ$ for which $S/\oP$ is finite for each maximal ideal $\oP\in \mcP_p$ for some prime $p\in\Z$ but $\{\#S/\oP\mid\oP\in\mathcal{P}_p\}$ is unbounded, let $S$ be equal to the almost Dedekind domain $D$ with finite residue fields of \cite[Example 14]{GilmEx} (see also \cite[Example VI.4.18]{CaCh}). In that example, $D$ is the integral closure of $\Z_{(p)}$ in a suitable infinite algebraic extension of $\Q$ constructed as the union of a tower of finite algebraic extensions, so by Lemma \ref{Unbounded res field deg lem} (see also \cite[Lemma VI.4.2]{CaCh}) applied to that example shows that $\IntQ(D,\oZ)=\IntQ(D)=\Z_{(p)}[X]$.  
\end{Ex}

%The converse of Lemma \ref{Unbounded res field deg lem} also holds {\rc no, fix here... we have for now just one half of the converse, so to speak..}
Note that in Example \ref{Gilmer example} there are neither pseudo-stationary sequence nor pseudo-divergent sequences in $D$ with respect to every possible extension of $v_p$ to $\oQ$ because $D$ is locally a DVR with finite residue field. Nonetheless, $\IntQ(D)$ is trivial. The reason for this is that if we work with all of the $\Q$-embeddings of $\oQ$ into $\oQp$, and consider the union of all of the images of $D$ under these embeddings, then we obtain the same $\IntQ$-ring (Lemma \ref{transfer to oZp}) and this union contains a pseudo-stationary sequence (see Proposition \ref{Unbounded res field deg prop}).

\begin{Def} 
For each $\oP \in \mcP_p$, let $u_{\oP}$ be the valuation associated to $\oZ_{\oP}$. By \cite[Chapter 2, \S 8, Theorem 8.1]{Neu}, for each $\oP\in\mathcal{P}_p$, the extension $u_{\oP}$ of $v_p$ to $\oQ$ is equal to $v_p\circ\tau_{\oP}$, for some $\Q$-embedding $\tau_{\oP}:\oQ\hookrightarrow\oQp$, where $v_p$ is the unique extension to $\oQp$ of the $p$-adic valuation. Given $S \subseteq \oZ$, we define the following subset of $\oZp$:
\begin{equation*}
\mathcal{S}_p=\bigcup_{\oP\in\mathcal{P}_p}\tau_{\oP}(S).
\end{equation*}
\end{Def}

The following lemma is analogous to Proposition \ref{IntAnpcompletion}.

\begin{Lem}\label{transfer to oZp}
Let $p\in\PP$, $S \subseteq \overline{\Z_{(p)}}$ and $\oP\in\mathcal{P}_p$. Then, we have
$$\IntQ(S,\oZ_{\oP})=\IntQ(\tau_{\oP}(S),\oZp).$$ 
In particular,
$$\IntQ(S,\overline{\Z_{(p)}})=\IntQ(\mathcal{S}_p,\oZp).$$ 
\end{Lem}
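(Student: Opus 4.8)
The plan is to prove the first equality $\IntQ(S,\oZ_{\oP})=\IntQ(\tau_{\oP}(S),\oZp)$ directly, mimicking the proof of Proposition \ref{IntAnpcompletion}, and then deduce the second equality by intersecting over all $\oP \in \mathcal{P}_p$. Fix $\oP \in \mathcal{P}_p$ and recall that $u_{\oP}=v_p\circ\tau_{\oP}$ for the $\Q$-embedding $\tau_{\oP}\colon \oQ \hookrightarrow \oQp$, so that $\tau_{\oP}$ carries the valuation domain $\oZ_{\oP}$ isometrically into $\oZp$. For the inclusion $\IntQ(S,\oZ_{\oP}) \subseteq \IntQ(\tau_{\oP}(S),\oZp)$: take $f \in \IntQ(S,\oZ_{\oP})$ and $\alpha = \tau_{\oP}(s) \in \tau_{\oP}(S)$ with $s \in S$; since $f \in \Q[X]$ has coefficients fixed by $\tau_{\oP}$, we get $f(\alpha)=f(\tau_{\oP}(s))=\tau_{\oP}(f(s))$, and $f(s) \in \oZ_{\oP}$ gives $u_{\oP}(f(s)) \geq 0$, hence $v_p(\tau_{\oP}(f(s))) \geq 0$, i.e.\ $f(\alpha) \in \oZp$. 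For the reverse inclusion: take $f \in \IntQ(\tau_{\oP}(S),\oZp)$ and $s \in S$; then $f(\tau_{\oP}(s)) \in \oZp$, so $v_p(f(\tau_{\oP}(s))) \geq 0$, and since $f \in \Q[X]$ this value equals $v_p(\tau_{\oP}(f(s)))=u_{\oP}(f(s))$; thus $f(s) \in \oZ_{\oP}$. This proves the first claimed equality.

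For the second equality, I would use that $\overline{\Z_{(p)}} = \bigcap_{\oP \in \mathcal{P}_p} \oZ_{\oP}$ (the localizations of $\oZ$ at the primes above $p$ are exactly the valuation domains of $\oQ$ extending $\Z_{(p)}$, and $\overline{\Z_{(p)}} = \oZ_{(p)}$ by Lemma \ref{AnpAn V1} is their intersection). This immediately gives
\begin{equation*}
\IntQ(S,\overline{\Z_{(p)}}) = \bigcap_{\oP \in \mathcal{P}_p} \IntQ(S,\oZ_{\oP}) = \bigcap_{\oP \in \mathcal{P}_p} \IntQ(\tau_{\oP}(S),\oZp).
\end{equation*}
It remains to identify the last intersection with $\IntQ(\mathcal{S}_p,\oZp)$ where $\mathcal{S}_p = \bigcup_{\oP} \tau_{\oP}(S)$. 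But for any family of subsets $\{T_{\oP}\}$ of $\oZp$ one has $\Int_F(\bigcup_{\oP} T_{\oP}, \oZp) = \bigcap_{\oP} \Int_F(T_{\oP},\oZp)$ for any coefficient field $F$, since $f(\bigcup T_{\oP}) \subseteq \oZp$ iff $f(T_{\oP}) \subseteq \oZp$ for every $\oP$; applying this with $F = \Q$ and $T_{\oP} = \tau_{\oP}(S)$ finishes the argument.

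The only real subtlety — and the step I would be most careful about — is the identification $\overline{\Z_{(p)}} = \bigcap_{\oP \in \mathcal{P}_p} \oZ_{\oP}$ together with the claim that each $u_{\oP}$ is genuinely the restriction of $v_p$ under $\tau_{\oP}$; this is exactly the content of the cited \cite[Chapter 2, \S 8, Theorem 8.1]{Neu} invoked in the definition of $\tau_{\oP}$, so it is available, but one must be sure the indexing of $\mathcal{P}_p$ by $\Q$-embeddings-up-to-the-decomposition-group matches so that no valuation of $\oQ$ over $p$ is missed or double-counted. Everything else is a formal manipulation of valuations and of the definition of integer-valued polynomial rings, exactly parallel to Propositions \ref{IntAnpcompletion} and \ref{localization IntQSZ V1}.
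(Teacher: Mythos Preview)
Your proof is correct and follows essentially the same route as the paper: both directions of the first equality are obtained from the identity $u_{\oP}=v_p\circ\tau_{\oP}$ together with the fact that $f\in\Q[X]$ commutes with $\tau_{\oP}$, and the second equality is deduced from $\overline{\Z_{(p)}}=\bigcap_{\oP\in\mcP_p}\oZ_{\oP}$ and the set-theoretic identity $\IntQ(\bigcup_{\oP}\tau_{\oP}(S),\oZp)=\bigcap_{\oP}\IntQ(\tau_{\oP}(S),\oZp)$. Your added caution about the bijection between $\mcP_p$ and valuations of $\oQ$ over $p$ is well placed but not a departure from the paper's argument.
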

\begin{proof}
$(\subseteq)$ Let $f\in\IntQ(S,\oZ_{\oP})$ and $\tau_{\oP}(s)\in \tau_{\oP}(S)$, for some $s\in S$. Then $f(s)\in\oZ_{\oP}$, so $u_{\oP}(f(s))\geq0$. Hence, $$v_p\circ \tau_{\oP}(f(s))=v_p(f(\tau_{\oP}(s)))\geq0$$
so that $f(\tau_{\oP}(s))\in\oZp$. It follows that $f\in\IntQ(\tau_{\oP}(S),\oZp)$.

$(\supseteq)$ Let $f\in\IntQ(\tau_{\oP}(S),\oZp)$ and $s\in S$. By definition we have $f(\tau_{\oP}(s))\in\oZp$. Hence, applying $\tau_{\oP}^{-1}$ to the previous relation we get $f(s)\in\oZ_{\oP}$, and so $f\in\IntQ(S,\oZ_{\oP})$.

The final claim follows from the facts that $\overline{\Z_{(p)}} = \bigcap_{\oP\in\mcP_p} \oZ_{\oP}$ and $\bigcap_{\oP\in\mcP_p}\IntQ(\tau_{\oP}(S),\oZp)=\IntQ(\mathcal{S}_p,\oZp)$.
\end{proof}

\begin{Rem}\label{S_p remark}
Recall the set $\Sigma_p(S)$ that was introduced in Definition \ref{Sigma_p}. One can show that $\Sigma_p(S)$ is equal to the $\Gal(\oQp/\Q_p)$-closure of $\mathcal{S}_p$. Consequently, many of the results in Section \ref{Local section} can be stated in terms of $\mathcal{S}_p$ rather than $\Sigma_p(S)$. In particular, the following equivalences hold:
$$\IntQ(S,\overline{\Z_{(p)}})\text{ is nontrivial }\stackrel{\text{Prop} 3.9}{\Longleftrightarrow}\IntQ(\mathcal{S}_p,\oZp)\text{ is nontrivial }\stackrel{\text{Prop} 3.10}{\Longleftrightarrow}\Int_{\oQp}(\mathcal{S}_p,\oZp)\text{ is nontrivial }$$
The proof of each equivalence is identical to that of the cited statement. One can then apply Theorem \ref{Int(S,V) nontrivial V1} to the ring $\Int_{\oQp}(\mathcal{S}_p,\oZp)$ to establish whether it is nontrivial or not.
\end{Rem}

\begin{Prop}\label{Unbounded res field deg prop}
Let $S\subseteq\overline{\Z_{(p)}}$ and let $\oMp$ be the maximal ideal of $\oZp$. The following are equivalent.
\begin{enumerate}[(1)]
\item $\{\# S/\oP\mid\oP\in \mathcal{P}_p\}$ is unbounded.
\item $\mathcal{S}_p/\oMp$ is infinite.
\item $\mathcal{S}_p$ contains a pseudo-stationary sequence $E$ (with respect to $v_p$, the unique valuation of $\oZp$) such that $\Br(E) = \oZp$.
\end{enumerate}
Furthermore, if any of these conditions holds, then $\IntQ(S,\overline{\Z_{(p)}})$ is trivial.
\end{Prop}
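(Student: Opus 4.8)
The plan is to run the cycle by disposing of the easy links first and concentrating on the one implication that carries the content. The first move is to push everything down to the residue field: for each $\oP\in\mathcal{P}_p$ the embedding $\tau_{\oP}$ carries $\oZ_{\oP}$ into $\oZp$ and $\oP\oZ_{\oP}$ into $\oMp$, hence induces an isomorphism $\oZ/\oP\xrightarrow{\sim}\oZp/\oMp=\oFp$ under which the image of $S$ corresponds to $\{\overline{\tau_{\oP}(s)}:s\in S\}$; thus $\# S/\oP=\#\{\overline{\tau_{\oP}(s)}:s\in S\}$ and $\mathcal{S}_p/\oMp=\bigcup_{\oP\in\mathcal{P}_p}\{\overline{\tau_{\oP}(s)}:s\in S\}$. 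From this, \ref{Unbounded res field deg prop}(1)$\Rightarrow$\ref{Unbounded res field deg prop}(2) is immediate, since $\#(\mathcal{S}_p/\oMp)\ge\# S/\oP$ for every $\oP$. For \ref{Unbounded res field deg prop}(2)$\Leftrightarrow$\ref{Unbounded res field deg prop}(3): if $\mathcal{S}_p/\oMp$ is infinite, choose $e_1,e_2,\ldots\in\mathcal{S}_p$ with pairwise distinct residues; then $v_p(e_i-e_j)=0$ for all $i\ne j$, so $E=\{e_i\}_{i\in\N}$ is pseudo-stationary with gauge $0$ and $\Br(E)=\{x:v_p(x)\ge 0\}=\oZp$. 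Conversely, since the value group of $\oZp$ is $\Q$, a pseudo-stationary $E\subseteq\mathcal{S}_p$ with $\Br(E)=\oZp$ must have gauge $0$, so its members have pairwise distinct residues and $\mathcal{S}_p/\oMp$ is infinite.

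For the ``furthermore'' clause, note that \ref{Unbounded res field deg prop}(1) is exactly the hypothesis ``for every $n$ there is $\oP\in\mathcal{P}_p$ with $\# S/\oP>n$'' of Lemma \ref{Unbounded res field deg lem}, so it gives $\IntQ(S,\overline{\Z_{(p)}})=\Z_{(p)}[X]$ at once. (Equivalently, one can argue from \ref{Unbounded res field deg prop}(3): by Lemma \ref{transfer to oZp} we have $\IntQ(S,\overline{\Z_{(p)}})=\IntQ(\mathcal{S}_p,\oZp)$, and by Proposition \ref{nontriviality Galois conjugate} and Remark \ref{S_p remark} this is trivial iff $\Int_{\oQp}(\mathcal{S}_p,\oZp)$ is; but \ref{Unbounded res field deg prop}(3) says condition (4) of Theorem \ref{Int(S,V) nontrivial V1} fails for $\mathcal{S}_p\subseteq\oZp$, so $\Int_{\oQp}(\mathcal{S}_p,\oZp)=\oZp[X]$.)

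The remaining implication \ref{Unbounded res field deg prop}(2)$\Rightarrow$\ref{Unbounded res field deg prop}(1) is the crux. I would first pass from $\mathcal{S}_p$ to $\Sigma_p(S)$: since $\oQp$ is Henselian, $\Gal(\oQp/\Q_p)$ preserves $\oMp$ and induces $\Gal(\oFp/\F_p)$ on $\oFp$, so by Remark \ref{S_p remark} the set $\Sigma_p(S)/\oMp$ is the $\Gal(\oFp/\F_p)$-orbit of $\mathcal{S}_p/\oMp$; as $\Gal(\oFp/\F_p)$-orbits of single elements are finite, $\mathcal{S}_p/\oMp$ is infinite iff $\Sigma_p(S)/\oMp$ is. Moreover $\Sigma_p(S)/\oMp=\bigcup_{s\in S}\{\text{roots of }\overline{m_s}\text{ in }\oFp\}$, where $m_s$ is the minimal polynomial of $s$ over $\Q$ (the reduction map carries the roots of $m_s$ in $\oZp$ onto those of $\overline{m_s}$), and this set is $\Gal(\oFp/\F_p)$-stable; being infinite it must contain elements of arbitrarily large degree over $\F_p$, so for every $D$ there are $s\in S$ and a prime $\mathfrak q$ of $O_{\Q(s)}$ above $p$ with $[\F_p(s\bmod\mathfrak q):\F_p]\ge D$. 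The plan is then to produce, for each $N$, a single prime $\oP\in\mathcal{P}_p$ with $\# S/\oP\ge N$ by an induction: having found $s_1,\dots,s_k\in S$ and a prime $\mathfrak Q_k$ of $O_{\Q(s_1,\dots,s_k)}$ above $p$ at which $s_1,\dots,s_k$ have pairwise distinct residues, pick (using the previous sentence) $s_{k+1}\in S$ possessing, at some prime of $O_{\Q(s_{k+1})}$, a residue of degree larger than $[\,O_{\Q(s_1,\dots,s_k)}/\mathfrak Q_k:\F_p\,]$, so that this residue automatically falls outside the current residue field, and extend $\mathfrak Q_k$ to a prime $\mathfrak Q_{k+1}$ of the compositum lying over that prime of $\Q(s_{k+1})$; any $\oP\in\mathcal{P}_p$ above $\mathfrak Q_N$ then satisfies $\# S/\oP\ge N$. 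I expect the extension step to be the main obstacle: a prime of the compositum lying over both $\mathfrak Q_k$ and the prescribed prime of $O_{\Q(s_{k+1})}$ exists only when these two primes agree on $\Q(s_1,\dots,s_k)\cap\Q(s_{k+1})$, so the argument must either select $s_{k+1}$ so as to keep the fields sufficiently disjoint or show that among the infinitely many admissible choices of $s_{k+1}$ and of the witnessing prime a compatible pair always occurs; managing this interaction between the number fields $\Q(s)$ is the technical heart of the proof.
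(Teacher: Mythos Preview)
Your treatment of (1)$\Rightarrow$(2), (2)$\Leftrightarrow$(3), and the triviality clause is correct and matches the paper's argument.

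For (2)$\Rightarrow$(1) you take a far more elaborate route than the paper, and the obstacle you flag at the end---lifting the prime $\mathfrak Q_k$ of $O_{\Q(s_1,\ldots,s_k)}$ and a prescribed prime of $O_{\Q(s_{k+1})}$ to a common prime of the compositum---is genuine and left unresolved. Two primes above $p$ in two number fields admit a common extension to the compositum only when they restrict to the same prime on the intersection field, and nothing in your construction controls this; so as it stands the inductive step is incomplete, and it is not clear the plan can be salvaged without substantial extra work.

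The paper avoids all of this by staying with the decomposition you already recorded in your first paragraph,
\[
\mathcal{S}_p/\oMp \;=\; \bigcup_{\oP\in\mathcal{P}_p}\tau_{\oP}(S)/\oMp,
\]
together with the bijections $S/\oP \leftrightarrow \tau_{\oP}(S)/\oMp$ induced by $\tau_{\oP}$. If some term $\tau_{\oP}(S)/\oMp$ is already infinite then $\#S/\oP=\infty$ and (1) holds; otherwise the paper argues that, the union being infinite, the finite cardinalities $\#\tau_{\oP}(S)/\oMp=\#S/\oP$ must be unbounded along a sequence $\oP_n$. Either way the primes witnessing (1) are handed to you directly by the terms of the union---no passage to $\Sigma_p(S)$, no Galois orbits in $\oFp$, no residue degrees of individual elements, and no tower of composita to manage.
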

%{\rc note that the last condition does not imply in general none of the previous three precisely by the next example: the residue field degrees are bounded because they are all equal to $1$ but nevertheless the $\IntQ$-ring is trivial (precisely because the other missing condition holds... :-)}
\begin{proof} 
%(1) $\Leftrightarrow$ (2).
%This is Lemma \ref{unbounded residue rings iff S_p/M infinite}.

(1) $\Rightarrow$ (2). Suppose that $\{\# S/\oP\mid\oP\in \mathcal{P}_p\}$ is unbounded, but $\# \mathcal{S}_p/\oMp = n$ for some $n \in \N$. By assumption, there exists some $\oP \in\mathcal P_p$ such that $\#S/\oP > n$. Thus, there exist $s_0,\ldots,s_n\in S$ such that $u_{\oP}(s_i-s_j)=0$ for each $0\leq i<j\leq n$. Applying the $\Q$-embedding $\tau_{\oP}$ to each $s_i$, we obtain $n+1$ elements $\tau_{\oP}(s_0), \ldots, \tau_{\oP}(s_n) \in \mathcal{S}_p$ such that 
\begin{equation*}
v_p(\tau_{\oP}(s_i)-\tau_{\oP}(s_j)) = v_p\circ\tau_{\oP}(s_i-s_j) = u_{\oP}(s_i-s_j)=0
\end{equation*}
for each $0\leq i<j\leq n$. This is impossible, since some two of the $\tau_{\oP}(s_i)$ must lie in the same residue class modulo $\oMp$.  We conclude that $\mathcal{S}_p/\oMp$ is infinite.

(2) $\Rightarrow$ (1). Assume that $\mathcal{S}_p/\oMp$ is infinite. Clearly, we have the identities
$$\frac{\phantom{x}\mathcal{S}_p\phantom{x}}{\oMp}=\frac{\bigcup_{\oP\in\mathcal{P}_p}\tau_{\oP}(S)}{\oMp}=\bigcup_{\oP\in\mathcal{P}_p}\frac{\tau_{\oP}(S)}{\oMp}.$$
Moreover, for each $\oP\in\mathcal{P}_p$, $\tau_{\oP}$ establishes a bijection between $S/\oP$ and $\tau_{\oP}(S)/\oMp$. If $\tau_{\oP}(S)/\oMp$ is infinite for some $\oP\in\mathcal{P}_p$, then considering the pullback via $\tau_{\oP}^{-1}$ we have that $S/\oP$ is infinite. If instead $\tau_{\oP}(S)/\oMp$ is finite for each $\oP\in\mathcal{P}_p$, since $\mathcal{S}_p/\oMp$ is infinite, this means that there exists a sequence $\{\oP_n\}_{n\in\N}\subseteq\mathcal{P}_p$ such that $\#\tau_{\oP_n}(S)/\oMp=d_n<\infty$ is diverging to infinity. This amounts to say via the previous one-to-one correspondence that $\#S/\oP_n=d_n$ is diverging to infinity, which means that $\{\# S/\oP\mid\oP\in \mathcal{P}_p\}$ is unbounded.

(2) $\Leftrightarrow$ (3) If $\mathcal{S}_p/\oMp$ is infinite, then we can choose a sequence $E = \{t_i\}_{i \in \N} \subseteq \mathcal{S}_p$ such that each $t_i$ lies in a distinct residue class modulo $\oMp$. We then have $v_p(t_i - t_j) = 0$ whenever $i \ne j$, so $E$ is pseudo-stationary and $\Br(E) = \oZp$. Conversely, if $\mathcal{S}_p/\oMp$ is finite, then no such pseudo-stationary sequence can exist in $S$.

%(3) $\Leftrightarrow$ (4)
For the last claim, apply Theorem \ref{Int(S,V) nontrivial V1} and the equivalences noted in Remark \ref{S_p remark}.
\end{proof}

We conjecture that there is an analogous result linking unbounded ramification indices of $S$ at $p$ to pseudo-divergent sequences in $\mathcal{S}_p$. However, determining the correct conditions for such a relation to hold in general has proven to be difficult. Nevertheless, we can provide one example showing that such a connection is possible, at least in the case where $S$ is a ring.

\begin{Ex}\label{Chabert example}
Following \cite[Example 6.2]{ChabEx} (see also \cite[Example VI.4.17]{CaCh}), one may construct an almost Dedekind domain $D$ with the following properties:
\begin{itemize}
\item $D$ has finite residue fields.
\item $D$ is the integral closure of $\Z_{(p)}$ in a (suitably chosen) infinite algebraic extension $K$ of $\Q$.
\item The set $\{e(\oP\cap D\mid p)\mid \oP\in\mathcal{P}_p\}$ is unbounded.
\end{itemize}
For this domain $D$, every valuation overring is a DVR, and every residue field of $D$ is equal to $\F_p$. These conditions guarantee that the ramification indices of $D$ are bounded. Furthermore, $D$ cannot contain any pseudo-divergent sequence with respect to any of its maximal prime ideals $\oP\cap D$, because $D_{\oP\cap D}$ is a DVR. We will show that $\mathcal{D}_p':=\bigcup_{\oP\in\mathcal{P}_p}\tau_{\oP}(D_{\oP\cap D})\subseteq\oZp$ contains a pseudo-divergent sequence and by an argument similar to Lemma \ref{transfer to oZp}, the ring $\IntQ(D,\overline{\Z_{(p)}})$ must equal $\Z_{(p)}[X]$.
\end{Ex}

%\label{Chabert example prop}
\begin{Prop}\label{triviality unbounded ramification}
Let $D\subseteq\overline{\Z_{(p)}}$ be a ring containing $\Z_{(p)}$ and define $\mathcal{D}_p'$ as above. The following are equivalent.
\begin{enumerate}[(1)]
\item $\{e(P\mid p)\mid p\in P\subset D\}$ is unbounded.
\item For each $n\in\N$, there exist $P_n\subset D$, with $p\in P_n$ and $s_n\in D_{P_n}$ such that $0<v_{P_n}(s_n)<\frac{1}{n}$.
\item There exists a pseudo-divergent sequence $E\subset\mathcal{D}_p'$ with $\Br(E)=\overline{M_p}$.
%=\mathcal{L}_E
%{\bc where $\mathcal{L}_E$ is the set of pseudo-limits of $E$ in ...? Which field does $\mathcal{L}_E$ live in? Actually, do we need $\mathcal{L}_E$ for the proof or for anything later on?}
\end{enumerate}
Moreover, if any one of these conditions holds, then $\IntQ(D)=\Z_{(p)}[X]$, and thus is trivial.
\end{Prop}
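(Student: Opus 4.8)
The plan is to prove the cycle $(1)\Rightarrow(2)\Rightarrow(3)\Rightarrow(1)$ and then to deduce the last assertion from $(3)$, using Theorem~\ref{Int(S,V) nontrivial V1} and a transfer principle in the spirit of Lemma~\ref{transfer to oZp}. Write $K=\mathrm{Frac}(D)$. For a prime $P$ of $D$ over $p$, let $v_P$ be a valuation of $K$ lying over $v_p$, centered at $P$, normalized by $v_P(p)=1$; its value group is $\frac1{e(P\mid p)}\Z$ when $e(P\mid p)<\infty$. Hence $e(P\mid p)>n$ exactly when $v_P$ takes a value in $(0,1/n)$, i.e.\ when some $s\in D_P$ has $0<v_P(s)<1/n$; letting $n$ run over $\N$ gives $(1)\Leftrightarrow(2)$.

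For $(2)\Rightarrow(3)$, pick for each $n$ a prime $\oP_n\in\mathcal P_p$ over $P_n$, so that $t_n:=\tau_{\oP_n}(s_n)\in\tau_{\oP_n}(D_{\oP_n\cap D})\subseteq\mathcal D_p'$ and $v_p(t_n)=u_{\oP_n}(s_n)=v_{P_n}(s_n)\in(0,1/n)$. Passing to a subsequence we may assume $\{v_p(t_n)\}_n$ strictly decreases to $0$; then $v_p(t_i-t_j)=v_p(t_j)$ for $i<j$, so $E=\{t_n\}_n$ is pseudo-divergent with gauge $\delta_n=v_p(t_n)\searrow 0$, and therefore $\Br(E)=\oMp$. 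I expect $(3)\Rightarrow(1)$ to be the delicate direction, because a pseudo-divergent sequence in $\mathcal D_p'$ may draw its terms from infinitely many different embeddings $\tau_{\oP}$, so one cannot simply work inside a single valued field.

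I would argue $(3)\Rightarrow(1)$ by contraposition: supposing every ramification index over $p$ is $\le e_0$, show $\mathcal D_p'$ contains no pseudo-divergent sequence of breadth ideal $\oMp$. The crucial estimate is that $v_p(t-t')\ge c$ for a constant $c=c(e_0)>0$ whenever $t\ne t'$ in $\mathcal D_p'$ and $v_p(t-t')>0$. To see this, write $t=\tau_{\oP}(x)$, $t'=\tau_{\oP'}(x')$ with $x\in D_{\oP\cap D}$, $x'\in D_{\oP'\cap D}$, fix $\oP_0\in\mathcal P_p$, and set $\sigma=\tau_{\oP_0}^{-1}\tau_{\oP}$, $\sigma'=\tau_{\oP_0}^{-1}\tau_{\oP'}\in\Gal(\oQ/\Q)$; the relations $u_{\oP_0}\circ\sigma=u_{\oP}$ and $u_{\oP_0}\circ\sigma'=u_{\oP'}$ give $v_p(t-t')=u_{\oP_0}\bigl(\sigma(x)-\sigma'(x')\bigr)$, a value of $u_{\oP_0}$ on the compositum $\sigma(K)\cdot\sigma'(K)$. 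Now $\sigma(K)$ and $\sigma'(K)$ are conjugates of $K$, so $u_{\oP_0}$ has ramification $\le e_0$ on each; and a compositum of two valued fields of ramification $\le e_0$ over $(\Q,v_p)$ has ramification bounded in terms of $e_0$ (reduce to finite subextensions and pass to maximal unramified subextensions). Hence the value group of $u_{\oP_0}$ on $\sigma(K)\sigma'(K)$ is $\frac1e\Z$ for some $e$ bounded by a function of $e_0$, so its positive values are $\ge c(e_0)>0$. Applied to two terms of a putative pseudo-divergent sequence, this shows its gauge stays $\ge c(e_0)$, hence cannot decrease to $0$; since $\Br(E)=\oMp$ forces the gauge to be coinitial in $\Q_{>0}$, no such sequence exists.

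Finally, assuming $(3)$: by Theorem~\ref{Int(S,V) nontrivial V1}, $\Int_{\oQp}(\mathcal D_p',\oZp)=\oZp[X]$, hence, intersecting with $\Q[X]$, $\IntQ(\mathcal D_p',\oZp)=\oZp[X]\cap\Q[X]=\Z_{(p)}[X]$. It remains to identify $\IntQ(D,\overline{\Z_{(p)}})$ with $\IntQ(\mathcal D_p',\oZp)$: from $\overline{\Z_{(p)}}=\bigcap_{\oP\in\mathcal P_p}\oZ_{\oP}$ one gets $\IntQ(D,\overline{\Z_{(p)}})=\bigcap_{\oP}\IntQ(D,\oZ_{\oP})$; and for each $\oP$, because $\oP\cap D$ is a maximal ideal of the one-dimensional domain $D$, the ring $D$ is dense in $D_{\oP\cap D}$ for the $u_{\oP}$-adic topology (solve the relevant linear congruences in the local quotients $D/(D\cap\{u_{\oP}\ge a\})$), whence a Taylor-expansion argument yields $\IntQ(D,\oZ_{\oP})=\IntQ(D_{\oP\cap D},\oZ_{\oP})=\IntQ(\tau_{\oP}(D_{\oP\cap D}),\oZp)$ (the last equality exactly as in Lemma~\ref{transfer to oZp}). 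Intersecting over $\oP$ gives $\IntQ(D,\overline{\Z_{(p)}})=\IntQ(\mathcal D_p',\oZp)=\Z_{(p)}[X]$, and since $\Z_{(p)}[X]\subseteq\IntQ(D)\subseteq\IntQ(D,\overline{\Z_{(p)}})$ we conclude $\IntQ(D)=\Z_{(p)}[X]$. The hard part is the ramification-of-composita bound used in $(3)\Rightarrow(1)$; a secondary technical point is the identification $\IntQ(D,\oZ_{\oP})=\IntQ(D_{\oP\cap D},\oZ_{\oP})$, which relies on $\oP\cap D$ being maximal in $D$ --- automatic here because $D$ is integral over $\Z_{(p)}$.
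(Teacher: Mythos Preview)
Your argument is correct, and for $(1)\Leftrightarrow(2)$ and $(2)\Rightarrow(3)$ it coincides with the paper's. The two proofs diverge at $(3)\Rightarrow(1)$ and at the ``moreover'' clause, and the comparison is instructive in both directions.

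For $(3)\Rightarrow(1)$ the paper argues directly: from a pseudo-divergent $E=\{\tilde s_n\}\subset\mathcal D_p'$ with $\Br(E)=\overline{M_p}$ it asserts that $v_p(\tilde s_n)$ strictly decreases to $0$, and then reads off unbounded ramification from $0<v_{P_n}(s_n)<1/n$. That inference is not justified as stated: the sequence $\tilde s_n=1+p^{1/n}$ is pseudo-divergent with breadth ideal $\overline{M_p}$ yet $v_p(\tilde s_n)\equiv 0$. Your contrapositive route avoids this: with all $e(P\mid p)\le e_0$, any difference $t-t'$ of elements of $\mathcal D_p'$ lies (after pulling back through a fixed $\tau_{\oP_0}$) in a compositum $\sigma(K)\cdot\sigma'(K)$, whose completion over $\Q_p$ has ramification at most $e_0^2$ (pass to the maximal unramified subextension $U$; then each $L_iU/U$ is totally ramified of degree $\le e_0$, so $[L_1L_2:U]\le e_0^2$). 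Hence any positive gauge value is $\ge 1/e_0^2$, ruling out $\Br(E)=\overline{M_p}$. This is heavier machinery, but it is what the implication actually requires.

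For the final triviality statement, by contrast, the paper's route is shorter than yours. The paper simply invokes Lemma~\ref{Unbounded res field deg lem}: condition~(2) gives, for each $n$, some $s_n\in D_{P_n}$ with $0<v_{P_n}(s_n)<1/n$; writing $s_n=a_n/b_n$ with $b_n\notin P_n$ one has $v_{P_n}(a_n)=v_{P_n}(s_n)$, so the numerator $a_n\in D$ already witnesses hypothesis~(ii) of that lemma, yielding $\IntQ(D,\overline{\Z_{(p)}})=\Z_{(p)}[X]$ and hence $\IntQ(D)=\Z_{(p)}[X]$ by sandwiching. Your detour through Theorem~\ref{Int(S,V) nontrivial V1} and the identification $\IntQ(D,\overline{\Z_{(p)}})=\IntQ(\mathcal D_p',\oZp)$ is valid (your density claim follows because $\mathrm{rad}\,(D\cap\{u_{\oP}\ge a\})=\oP\cap D$, so $b\notin\oP\cap D$ is a unit modulo that ideal), but it is an unnecessary complication here.
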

%The condition $\Br(E)=\mathcal{L}_E$ amounts to say that $0$ is a pseudo-limit of $E$ (\cite[Lemma 2.5]{PS2})
\begin{proof}
(1) $\Rightarrow$ (2) If for some prime ideal $P$ of $D$ containing $p$ we have $e(P\mid p)=\infty$, then it is easy to see that for each $n\in\N$, there exists $s_n\in D_P$ with $0<v_P(s_n)<\frac{1}{n}$. Suppose instead  that exists a sequence $\{P_n\}_{n\in\N}$ of prime ideals of $D$ containing $p$ such that $e(P_n\mid p)=e_n>n$ for each $n\in\N$. We claim that there exists $s_n\in D_{P_n}$ such that $v_{P_n}(s_n)=\frac{1}{e_n}<\frac{1}{n}$  which is precisely condition (2). Indeed, for each $n\in\N$ there exists $\alpha_n\in D$ such that $v_{P_n}(\alpha_n)=\frac{a_n}{e_n}$, for some $a_n\in \Z$ coprime with $e_n$. Now, if $aa_n+be_n=1$ for some $a,b\in\Z$, then $s_n=\alpha_n^a\cdot p^b$ is an element of $D_{P_n}$ with the desired value. 
%Via the $\Q$-embedding $\tau_{\oP_n}$ we have
%$$v_{\oP_n}(s_n)=v_p\circ \tau_{\oP_n}(s_n)=\frac{1}{n}$$
%If we set $t_n=\tau_{\oP_n}(s_n)$ for every $n\in\N$, we see that $\{t_n\}_{n\in\N}$ is a pseudo-divergent sequence in $\mathcal{D}_p'$ with breadth ideal equal to $\oMp$.

%ii)$\Rightarrow$i). If ii) holds, then $e(P_n\mid p)>n$ (since the denominator of $v_{P_n}(s_n)$ will be strictly bigger than $n$). So i) holds.

(2) $\Rightarrow$ (3) Suppose (2) holds. Then for each $n\in\N$ we have $0<v_{P_n}(s_n)=v_p(\tau_{P_n}(s_n))=v_p(\tilde{s}_n)<\frac{1}{n}$, where $\tilde{s}_n\in\mathcal{D}_p'$. We remark that $P_n$ could be the same prime ideal for infinitely many $n$'s; this corresponds to the case $e(P\mid p)=\infty$. Suppose that for some $n\in\N$, we have $\tilde{s}_1,\ldots,\tilde{s}_n$ so that 
$0<v_p(\tilde{s}_{i+1})<v_p(\tilde{s}_i)$ for all $i=1,\ldots,n-1$. If we take $m\in\N$ so that $0<\frac{1}{m}\leq v_p(\tilde{s}_n)$, then by assumption there exists $\tilde{s}_{n+1}\in\mathcal{D}_p'$ such that $0<v_p(\tilde{s}_{n+1})<\frac{1}{m}\leq v_p(\tilde{s}_n)$. In this way we can extract a subsequence $E$ from $\{\tilde{s}_n\}_{n\in\N}\subset\mathcal{D}_p'$ which is pseudo-divergent (i.e., $0<v_p(\tilde{s}_{n+1})<v_p(\tilde{s}_{n})$ for each $n\in\N$) and has $\Br(E)=\overline{M_p}$.

(3) $\Rightarrow$ (1) Assume that (3) holds. Then, there exists a pseudo-divergent sequence $E=\{\tilde{s}_n\}_{n\in\N}\subset\mathcal{D}_p'$   with $\Br(E)=\overline{M_p}$. Hence, $v_p(\tilde{s}_n)$ strictly decreases to $0$ and, up to taking a subsequence, we may assume that $0<v_p(\tilde{s}_n)<\frac{1}{n}$. Since $\tilde{s}_n=\tau_{P_n}(s_n)$ for some $P_n\subset D$, $p\in D$ and $s_n\in D_{P_n}$, we have $0<v_{P_n}(s_n)=v_p(\tau_{P_n}(s_n))<\frac{1}{n}$ which implies that $n<e(P_n\mid p)$. Thus $\{e(P_n\mid p)\}_{n\in\N}$ is unbounded. Note that, as in the proof of (2) $\Rightarrow$ (3), if $P_n$ is the same prime ideal $P$ for infinitely many $n$'s, then  $e(P\mid p)=\infty$.

The last claim follows from Lemma \ref{Unbounded res field deg lem}.
\end{proof}

We end this section by showing that when $S$ is an integrally closed subring of $\overline{\Z_{(p)}}$ containing $\Z_{(p)}$, then the conditions of Lemma \ref{Bounded ram and res field} are both necessary and sufficient for $\IntQ(S,\oZ)$ to be nontrivial.

\begin{Not}
For an algebraic extension $K$ of $\Q$ and a prime $p\in\PP$, let $D$ be the integral closure of $\Z_{(p)}$ in $K$ (note that $D=O_{K,(p)}$, the localization of the integral closure $O_K$ of $\Z$ in $K$ with respect to $p$). We set
$$E_{D,p}:=\{e(P|p) \mid P\subset D\} \quad \text{ and } \quad F_{D,p}:=\{f(P|p) \mid P\subset D\}.$$
\end{Not}

\begin{Lem}\label{bounded degree Galois closure}
Let $K$ be an infinite algebraic extension of $\Q$ and let $K'$ be the Galois closure of $K$ over $\Q$. Let $p\in\PP$ and let $D$ and $D'$ be the integral closures of $\Z_{(p)}$ in $K$ and $K'$, respectively. If $E_{D,p}$ and $F_{D,p}$ are bounded, then $E_{D',p}$ and $F_{D',p}$ are bounded.
\end{Lem}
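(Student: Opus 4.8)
\emph{Overview of the strategy.} The plan is to reduce to finite subextensions of $K$ and then analyse the local completions of the Galois closure. The statement about $e$ and $f$ for $D'$ will follow from a uniform bound, depending only on $p$ and the bounds $e_0:=\sup E_{D,p}$, $f_0:=\sup F_{D,p}$, for the ramification indices and residue degrees of the primes over $p$ of the Galois closures of the finite subfields of $K$.

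\emph{Reduction to the finite case.} Write $K=\bigcup_n K_n$ as an increasing union of finite extensions $K_n/\Q$, and let $K_n'$ be the Galois closure of $K_n/\Q$; then $K'=\bigcup_n K_n'$. Since $D$ is the integral closure of $\Z_{(p)}$ in $K$, it is also the integral closure of $O_{K_n,(p)}$ in $K$, so every prime of $O_{K_n}$ over $p$ lies under a prime of $D$ and hence has ramification index $\le e_0$ and residue degree $\le f_0$, uniformly in $n$. On the other hand, if $\mathfrak P'$ is a prime of $D'$ over $p$ and $\mathfrak P_n'=\mathfrak P'\cap O_{K_n',(p)}$, then the value group (resp.\ residue field) attached to $\mathfrak P'$ is the increasing union of those attached to the $\mathfrak P_n'$. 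Therefore it suffices to find a bound $B=B(p,e_0,f_0)$ such that, for \emph{every} finite $L/\Q$ whose primes over $p$ all have ramification index $\le e_0$ and residue degree $\le f_0$, every prime over $p$ of the Galois closure $L'$ of $L/\Q$ has ramification index and residue degree $\le B$.

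\emph{The finite case.} Fix such an $L$, let $L'$ be its Galois closure, and fix a prime $\mathfrak P'$ of $O_{L'}$ over $p$ with completion $M$, so that $e(\mathfrak P'|p)=e(M/\Q_p)$ and $f(\mathfrak P'|p)=f(M/\Q_p)$. Realising $M$ as the topological closure of $\lambda(L')$ in $\oQp$ for a suitable $\Q$-embedding $\lambda\colon L'\hookrightarrow\oQp$, and using that $L'$ is the compositum of the $\Q$-conjugates of $L$, one checks that $M$ is the compositum in $\oQp$ of the closures $\overline{\tau(L)}$ as $\tau$ runs over all $\Q$-embeddings $L\hookrightarrow\oQp$. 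Grouping the $\tau$'s by the prime $\mathfrak p$ of $L$ they induce, the embeddings inducing a fixed $\mathfrak p$ yield precisely the $\Q_p$-conjugates of the completion $L_{\mathfrak p}$, whose compositum is the Galois closure $\widetilde{L_{\mathfrak p}}$ of $L_{\mathfrak p}/\Q_p$. Hence
\begin{equation*}
M=\prod_{\mathfrak p\mid p}\widetilde{L_{\mathfrak p}}\subseteq\oQp .
\end{equation*}
For each $\mathfrak p$ we have $[L_{\mathfrak p}:\Q_p]=e(\mathfrak p|p)f(\mathfrak p|p)\le e_0f_0$, so $[\widetilde{L_{\mathfrak p}}:\Q_p]$ divides $(e_0f_0)!$. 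Since for each integer $m$ there are only finitely many subfields of $\oQp$ that are finite over $\Q_p$ of degree $\le m$, the compositum $\mathcal M\subseteq\oQp$ of all those that are Galois over $\Q_p$ of degree $\le (e_0f_0)!$ is a \emph{finite} extension of $\Q_p$ depending only on $p$ and $e_0f_0$; and $M\subseteq\mathcal M$. Thus $e(\mathfrak P'|p)\le e(\mathcal M/\Q_p)$ and $f(\mathfrak P'|p)\le f(\mathcal M/\Q_p)$, and we take $B:=\max\{e(\mathcal M/\Q_p),f(\mathcal M/\Q_p)\}$. Combined with the previous paragraph, this shows $E_{D',p}$ and $F_{D',p}$ are bounded.

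\emph{Where the difficulty lies.} The genuinely delicate point is that $e$ and $f$ really can increase in the Galois closure — e.g.\ for $p\equiv 2\pmod 3$ the field $\Q(p^{1/3})$ has a single prime over $p$ with $e=3$, $f=1$, while its Galois closure $\Q(\zeta_3,p^{1/3})$ has a prime over $p$ with $f=2$ — so one cannot hope to bound the invariants of $L'$ by naive least common multiples of the local invariants of $L$. The resolution is precisely the compositum description $M=\prod_{\mathfrak p}\widetilde{L_{\mathfrak p}}$ together with the finiteness of bounded-degree extensions of $\Q_p$, which forces all the $M$'s (over all admissible $L$) into one fixed finite extension of $\Q_p$.
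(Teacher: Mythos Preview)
Your proof is correct and follows essentially the same approach as the paper's: both arguments hinge on the fact that $\Q_p$ has only finitely many extensions of bounded degree, so all the relevant local completions are trapped inside a single fixed finite extension of $\Q_p$, and hence the completion of the Galois closure at any prime over $p$ is contained in a fixed finite Galois extension of $\Q_p$. The only structural difference is that you first reduce to finite subextensions $K_n\subseteq K$ and then analyse the local Galois closures $\widetilde{L_{\mathfrak p}}$, whereas the paper works directly with the infinite extension $K$, observing that all completions of the conjugates $\sigma(K)$ lie in one finite $L/\Q_p$ and that the completion of $K'$ is then contained in the Galois closure of $L$; the underlying idea is the same.
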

\begin{proof}
Let $\sigma\in G=\Gal(\oQ/\Q)$. Note that $E_{\sigma(D),p}=E_{D,p}$ and $F_{\sigma(D),p}=F_{D,p}$. Since $\Q_p$ has only finitely many extensions of bounded degree, there exists a finite extension $L$ of $\Q_p$ such that, for every $\sigma\in G$, $L$ contains the completion of $\sigma(K)$ with respect to any prime ideal of $\sigma(D)$. In particular, there are only finitely many such completions.

Note that $K'$ is equal to the compositum of the fields $\sigma(K), \sigma\in G$. Let $Q$ be a prime ideal of $D'$. We claim that the completion $\widehat{K'}$ of $K'$ with respect to $Q$ is equal to the compositum of the completions $\widehat{\sigma(K)}$ of $\sigma(K)$ with respect to $Q\cap\sigma(D)$ (which by the above are finite in number). Indeed, $\widehat{K'}$ contains $\widehat{\sigma(K)}$ for every $\sigma\in G$ and so $\widehat{K'}$ contains their compositum; conversely, since $\widehat{K'}$ is the completion of $K'$, it is contained in the compositum of the (finitely many) fields $\widehat{\sigma(K)},\sigma\in G$, because the latter is complete and contains $K'$.

Now, if $L'$ is the Galois closure of $L$ over $\Q_p$, then it is clear that $\widehat{K'}$ is contained in $L'$. Since this holds for every prime ideal $Q$ of $D'$ (and $L'$ is independent from the choice of the prime ideal $Q$), it follows that $E_{D',p}$ and $F_{D',p}$ are bounded.
\end{proof}

\begin{Thm}\label{nontrivial Prufer for rings}
Let $D$ be an integrally closed subring of $\overline{\Z_{(p)}}$ containing $\Z_{(p)}$. The following conditions are equivalent.
\begin{enumerate}[(1)]
\item $\IntQ(D)$ is Pr\"ufer.
\item $\IntQ(D)$ is nontrivial (i.e., $\Z_{(p)}[X] \subsetneq \IntQ(D)$).
\item The sets $F_{D,p}$ and $E_{D,p}$ are bounded.
\end{enumerate}
\end{Thm}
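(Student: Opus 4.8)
The plan is to prove the cycle $(1) \Rightarrow (2) \Rightarrow (3) \Rightarrow (1)$, with the bulk of the work sitting in $(2) \Rightarrow (3)$ and the reduction to the Galois case. The implication $(1) \Rightarrow (2)$ is immediate: a Pr\"ufer domain is integrally closed and $\Z_{(p)}[X]$ is never Pr\"ufer (it has Krull dimension $2$), so $\IntQ(D) \ne \Z_{(p)}[X]$. For $(3) \Rightarrow (1)$, if $F_{D,p}$ and $E_{D,p}$ are bounded, then $D$ is an almost Dedekind domain with finite residue fields satisfying the double-boundedness condition of \cite[Theorem 2.5]{LopIntD} (here I would need to observe that $D$, being the integral closure of $\Z_{(p)}$ in an algebraic extension of $\Q$, is indeed almost Dedekind with finite residue fields — this is classical). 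Hence $\Int(D)$ is Pr\"ufer; then $\Int(D)$ is integral over $\IntQ(D)$ by \cite[Lemma 2.20]{PerDedekind}, and $\IntQ(D)$ is Pr\"ufer by \cite[Theorem 22.4]{Gilm}, exactly as in the proof of Proposition \ref{Qn Prufer prop}.

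The heart of the matter is $(2) \Rightarrow (3)$, which I would prove in contrapositive form: assume $E_{D,p}$ or $F_{D,p}$ is unbounded, and show $\IntQ(D) = \Z_{(p)}[X]$. Write $K$ for the fraction field of $D$, so $D$ is the integral closure of $\Z_{(p)}$ in $K$. First reduce to the Galois case: let $K'$ be the Galois closure of $K$ over $\Q$ and $D'$ the integral closure of $\Z_{(p)}$ in $K'$. By Lemma \ref{bounded degree Galois closure} (used contrapositively), if $E_{D,p}$ or $F_{D,p}$ is unbounded then $E_{D',p}$ or $F_{D',p}$ is unbounded. Since $D \subseteq D'$ gives $\IntQ(D') \subseteq \IntQ(D)$, and conversely — because $D'/D$ is an integral extension and $D$ is integrally closed in $K$ — one checks $\IntQ(D) \cap K' $ relates to $\IntQ(D')$; more cleanly, for $f \in \Q[X]$ and $s' \in D'$, $f(s')$ is integral over $\Z_{(p)}$, hence lies in $D'$, and the condition $f(D) \subseteq D$ forces (via the embeddings $\tau_{\oP}$ and Galois conjugacy, as in Remark \ref{S_p remark}) $f(D') \subseteq D'$. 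So it suffices to treat the Galois case, where $K = K'$ and $D = D'$ is Galois-stable; then $\IntQ(D,\oZ_{\oP}) = \IntQ(D, \overline{\Z_{(p)}})$ by Lemma \ref{Extensions to Qbar lemma 1}(1), since the prime ideals above $p$ are all conjugate. Now in the Galois case, $E_{D,p}$ and $F_{D,p}$ are each a single value (all $e(P|p)$ equal, all $f(P|p)$ equal) unless infinite — so "unbounded" means "infinite". If $F_{D,p}$ is infinite, then for the single $\oP \in \mcP_p$ corresponding to a chosen $\tau_{\oP}$, $S/\oP$ is infinite, so condition (i) of Lemma \ref{Unbounded res field deg lem} holds for every $n$, giving $\IntQ(D, \overline{\Z_{(p)}}) = \Z_{(p)}[X]$. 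If $E_{D,p}$ is infinite, then by the argument of Proposition \ref{triviality unbounded ramification}(1)$\Rightarrow$(2) there is $s \in D_{\oP}$ with $0 < v_{\oP}(s) < 1/n$ for every $n$, so condition (ii) of Lemma \ref{Unbounded res field deg lem} applies and again $\IntQ(D, \overline{\Z_{(p)}}) = \Z_{(p)}[X]$. Finally $\IntQ(D) = \IntQ(D, \overline{\Z_{(p)}})$ trivially since $D \subseteq \overline{\Z_{(p)}}$, completing the contrapositive.

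I expect the main obstacle to be the reduction from $D$ to its Galois closure $D'$: one must argue carefully that $\IntQ(D)$ being nontrivial is equivalent to (or at least implies) $\IntQ(D')$ being nontrivial, using that a polynomial integral-valued on $D$ is automatically integral-valued on all $\Q$-conjugates of elements of $D$, hence on the compositum $D'$, together with the description of $\mcP_p$-primes via the embeddings $\tau_{\oP}$. A secondary technical point is confirming that in the Galois (more precisely, normal) setting "unbounded" genuinely collapses to "infinite", which relies on the transitivity of the Galois group on primes above $p$ forcing all ramification indices (resp. residue degrees) to coincide whenever they are finite — and when the extension is infinite, this common value may legitimately be $\infty$. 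Once these structural reductions are in place, the triviality conclusions are immediate from Lemma \ref{Unbounded res field deg lem} and Proposition \ref{triviality unbounded ramification}, and the Pr\"ufer equivalence is a verbatim repetition of the Loper/Peruginelli/Gilmer argument used for Proposition \ref{Qn Prufer prop}.
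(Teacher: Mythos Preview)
Your plan puts the Galois reduction (Lemma \ref{bounded degree Galois closure}) in the wrong implication, and this creates a genuine gap in $(3) \Rightarrow (1)$.

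In $(3) \Rightarrow (1)$ you apply Loper directly to $D$ and then invoke \cite[Lemma 2.20]{PerDedekind} to say that $\Int_K(D)$ is integral over $\IntQ(D)$. But that lemma identifies the integral closure of $\IntQ(D)$ with the $\Int$-ring over the \emph{Galois orbit} $G(D)$; when $K/\Q$ is not Galois this is $\Int_{K'}(G(D),D')$, not $\Int_K(D)$. (Compare Proposition \ref{Qn Prufer prop}, where the same move is legitimate precisely because $\Q^{(n)}/\Q$ is Galois.) The paper's proof of $(3) \Rightarrow (1)$ is exactly the repair: pass to the Galois closure $K'$ and $D'$, use Lemma \ref{bounded degree Galois closure} to see that $E_{D',p}$ and $F_{D',p}$ remain bounded, apply Loper to $D'$ to get $\Int_{K'}(D')$ Pr\"ufer, observe $\Int_{K'}(D') \subseteq \Int_{K'}(G(D),D')$ so the latter is Pr\"ufer, and then descend via \cite[Lemma 2.20]{PerDedekind} and \cite[Theorem 22.4]{Gilm} since $\Int_{K'}(G(D),D')$ \emph{is} the integral closure of $\IntQ(D)=\IntQ(G(D),D')$. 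So Lemma \ref{bounded degree Galois closure} is the key input for $(3) \Rightarrow (1)$, not for $(2) \Rightarrow (3)$.

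Conversely, your Galois detour in $(2) \Rightarrow (3)$ is unnecessary and partly wrong. Propositions \ref{Unbounded res field deg prop} and \ref{triviality unbounded ramification} already apply to arbitrary $D \subseteq \overline{\Z_{(p)}}$ and give triviality of $\IntQ(D,\overline{\Z_{(p)}}) = \IntQ(D)$ directly whenever $F_{D,p}$ or $E_{D,p}$ is unbounded; this is exactly how the paper argues. Your version also misstates the contrapositive of Lemma \ref{bounded degree Galois closure} (it yields ``unbounded for $D'$ $\Rightarrow$ unbounded for $D$'', the opposite of what you write), and the claimed passage from nontriviality of $\IntQ(D)$ to nontriviality of $\IntQ(D')$ is not justified: an element of $D'$ need not lie in any $\sigma(K)$, so there is no evident reason why $f(D) \subseteq D$ should force $f(D') \subseteq D'$.
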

\begin{proof}
Clearly, (1) implies (2).
%1.$\Rightarrow$2. This is Lemma 5.1 (v3).

(2) $\Rightarrow$ (3) If $F_p$ is unbounded then by Proposition \ref{Unbounded res field deg prop} and Remark \ref{S_p remark}, $\IntQ(D)$ is trivial. Similarly, if $E_p$ is unbounded the same conclusion holds by Proposition \ref{triviality unbounded ramification}.

(3) $\Rightarrow$ (1)  Let $K'$ be the Galois closure of $K$ and let $D'$ be the integral closure of $D$ in $K'$. By Lemma \ref{bounded degree Galois closure}, $E_{D',p}$ and $F_{D',p}$ are bounded. Therefore, $\Int_{K'}(D')$ is Pr\"ufer by \cite[Theorem 2.5]{LopIntD}. We consider now the set $G(D)=\bigcup_{\sigma\in G}\sigma(D)\subset D'$. Since $\Int_{K'}(D')\subseteq\Int_{K'}(G(D),D')$, the latter ring is Pr\"ufer, too. By \cite[Lemma 2.20]{PerDedekind}, $\Int_{K'}(G(D),D')$ is the integral closure in $K'(X)$ of $\IntQ(D)=\IntQ(G(D),D')$.  It follows by \cite[Theorem 22.4]{Gilm} that $\IntQ(D)=\Int_{K'}(G(D),D')\cap\Q(X)$ is Pr\"ufer.
\end{proof}

\begin{Cor}
  Let $D\subseteq\oZ$ be an integrally closed subring.
  \begin{enumerate}[(1)]
      \item $\IntQ(D)$ is nontrivial if and only if there exists some $p\in\PP$ such that both $E_{D,p}$ and $E_{F,p}$ are bounded.
      \item $\IntQ(D)$ is Pr\"ufer if and only if for each $p\in\PP$ the sets $E_{D,p}$ and $E_{D,p}$ are bounded.
  \end{enumerate}
\end{Cor}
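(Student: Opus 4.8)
The plan is to derive both equivalences from the local Theorem~\ref{nontrivial Prufer for rings} together with the localization results of Section~\ref{Local section}; I read the two stray symbols ``$E_{F,p}$'' and the repeated ``$E_{D,p}$'' in the statement as $F_{D,p}$. Since $D$ is an integrally closed subring of $\oZ$ it contains $\Z$ and equals $\oZ\cap K$, where $K=\mathrm{Frac}(D)$; as $f(D)\subseteq K$ for any $f\in\Q[X]$, this gives $\IntQ(D)=\IntQ(D,\oZ)$. For $p\in\PP$ let $D_{(p)}$ be the localization of $D$ at $\Z\setminus p\Z$; then $D_{(p)}$ is the integral closure of $\Z_{(p)}$ in $K$, hence an integrally closed subring of $\overline{\Z_{(p)}}$ containing $\Z_{(p)}$, and its nonzero primes are those of $D$ above $p$ with unchanged $e$ and $f$, so Theorem~\ref{nontrivial Prufer for rings} applies to $D_{(p)}$ with $E_{D_{(p)},p}=E_{D,p}$ and $F_{D_{(p)},p}=F_{D,p}$. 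By Proposition~\ref{localization IntQSZ V1} we have $\IntQ(D)_{(p)}=\IntQ(D,\overline{\Z_{(p)}})$, and $\IntQ(D_{(p)})\subseteq\IntQ(D,\overline{\Z_{(p)}})$ is immediate.

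For (1)~$\Leftarrow$: if $E_{D,p}$ and $F_{D,p}$ are bounded for some $p$, then Lemma~\ref{Bounded ram and res field} applied to $S=D$ directly gives that $\IntQ(D)=\IntQ(D,\oZ)$ is nontrivial. For (1)~$\Rightarrow$ I would prove the contrapositive: assuming that for every $p$ at least one of $E_{D,p}$, $F_{D,p}$ is unbounded, I show $\IntQ(D,\overline{\Z_{(p)}})=\Z_{(p)}[X]$ for all $p$, so that $\IntQ(D)=\Z[X]$ by Proposition~\ref{1 iff 2}. Fix $p$. If $F_{D,p}$ is unbounded then $\{\#D/\oP\mid\oP\in\mathcal{P}_p\}$ is unbounded (as $\#D/\oP=\#\bigl(D/(D\cap\oP)\bigr)$ and $\{f(D\cap\oP\mid p)\}=F_{D,p}$), and triviality follows from Proposition~\ref{Unbounded res field deg prop}. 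If instead $E_{D,p}$ is unbounded, I would verify condition~(ii) of Lemma~\ref{Unbounded res field deg lem} for $S=D$: writing $D=\bigcup_L O_L$ over the finite subfields $L$ of $K$, unboundedness of $E_{D,p}$ forces the ramification indices $e(\mathfrak{P}\mid p)$ of the primes $\mathfrak{P}$ of the $O_L$ above $p$ to be unbounded, so for each $n$ there is such an $\mathfrak{P}$ with $e(\mathfrak{P}\mid p)>n$ and an $s\in\mathfrak{P}\setminus\mathfrak{P}^2\subseteq D$ with $0<v_{\oP}(s)=1/e(\mathfrak{P}\mid p)<1/n$ for a prime $\oP\in\mathcal{P}_p$ over $\mathfrak{P}$; Lemma~\ref{Unbounded res field deg lem} then gives $\IntQ(D,\overline{\Z_{(p)}})=\Z_{(p)}[X]$.

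For (2)~$\Rightarrow$: localizations of Pr\"ufer domains are Pr\"ufer, so if $\IntQ(D)$ is Pr\"ufer then each $\IntQ(D)_{(p)}=\IntQ(D,\overline{\Z_{(p)}})$ is Pr\"ufer; were some $E_{D,p}$ or $F_{D,p}$ unbounded, part~(1)'s argument would make this ring equal to $\Z_{(p)}[X]$, which is not Pr\"ufer (e.g.\ $(p,X)$ is not invertible), a contradiction. For (2)~$\Leftarrow$, assume $E_{D,p},F_{D,p}$ bounded for all $p$. By Theorem~\ref{nontrivial Prufer for rings} each $\IntQ(D_{(p)})$ is Pr\"ufer, and since $\IntQ(D,\overline{\Z_{(p)}})=\IntQ(D)_{(p)}$ is an overring of $\IntQ(D_{(p)})$ (both have fraction field $\Q(X)$), it is Pr\"ufer too. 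A maximal ideal $\mathfrak{m}$ of $\IntQ(D)$ contracts to $0$ or $(p)$ in $\Z$: if to $0$ then $\IntQ(D)_{\mathfrak{m}}$ is a localization of $\IntQ(D)\otimes_\Z\Q=\Q[X]$, a valuation domain; if to $(p)$ then $\mathfrak{m}$ misses $\Z\setminus p\Z$, so $\IntQ(D)_{\mathfrak{m}}$ is a localization of the Pr\"ufer domain $\IntQ(D)_{(p)}$, again a valuation domain. Hence all localizations of $\IntQ(D)$ at maximal ideals are valuation domains, so $\IntQ(D)$ is Pr\"ufer.

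The step I expect to be the main obstacle is the $E_{D,p}$-unbounded case of (1)~$\Rightarrow$: one cannot simply quote Proposition~\ref{triviality unbounded ramification} for $D_{(p)}$, since that only shows $\IntQ(D_{(p)})=\Z_{(p)}[X]$ while $\IntQ(D,\overline{\Z_{(p)}})$ is a priori larger; instead one must exhibit, inside $D$ itself, elements of arbitrarily small positive valuation above $p$, which is where the description of $D$ as a directed union of rings of integers of finite subfields is essential. The remaining ingredients — the identity $\IntQ(D)_{(p)}=\IntQ(D,\overline{\Z_{(p)}})$, stability of the Pr\"ufer property under passage to overrings, and the maximal-ideal bookkeeping in~(2) — are routine.
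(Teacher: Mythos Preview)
Your argument is correct, but it takes a longer route than the paper's because you only use the inclusion $\IntQ(D_{(p)})\subseteq\IntQ(D,\overline{\Z_{(p)}})$ rather than the equality. The paper observes that $\overline{\Z_{(p)}}\cap K=D_{(p)}$, hence $\IntQ(D,\overline{\Z_{(p)}})=\IntQ(D,D_{(p)})$, and then invokes \cite[Corollary I.2.6]{CaCh} to get $\IntQ(D,D_{(p)})=\IntQ(D_{(p)})$. With this identity in hand, part~(1) is immediate from Proposition~\ref{1 iff 2} and Theorem~\ref{nontrivial Prufer for rings} applied to $D_{(p)}$, and part~(2) reduces to the standard fact that $\IntQ(D)$ is Pr\"ufer iff each $\IntQ(D)_{(p)}$ is. In particular, the obstacle you flag in your last paragraph---that Proposition~\ref{triviality unbounded ramification} only gives $\IntQ(D_{(p)})=\Z_{(p)}[X]$ while $\IntQ(D,\overline{\Z_{(p)}})$ is a priori larger---simply disappears once one has the equality. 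Your hands-on construction of elements of small positive valuation inside $D$ via the finite sublayers $O_L$ is perfectly valid, and your maximal-ideal bookkeeping in~(2) actually supplies a detail the paper glosses over; but the single missing identification $\IntQ(D,\overline{\Z_{(p)}})=\IntQ(D_{(p)})$ would let you replace all of that with a two-line reduction to Theorem~\ref{nontrivial Prufer for rings}.
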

\begin{proof}
    (1) This follows by Proposition \ref{1 iff 2} and Theorem \ref{nontrivial Prufer for rings}.

    (2) It is easy to show that $\IntQ(D)$ is Pr\"ufer if and only if for each $p\in\PP$, the localization $\IntQ(D)_{(p)}$ is Pr\"ufer. By Proposition \ref{localization IntQSZ V1}, the latter ring is equal to $\IntQ(D,\overline{\Z_{(p)}})$, and since $\overline{\Z_{(p)}}\cap K=D_{(p)}$, we have $\IntQ(D,\overline{\Z_{(p)}})=\IntQ(D,D_{(p)})$. Finally, $\IntQ(D,D_{(p)})=\IntQ(D_{(p)})$ by \cite[Corollary I.2.6]{CaCh}. The conclusion then follows by Theorem \ref{nontrivial Prufer for rings}.   
    \end{proof}

\vskip1cm

\section{Polynomial closure of subsets of algebraic integers}\label{Poly closure section}

Recall that by Example \ref{2Z example}, when $S=2\oZ = \{2\alpha \mid \alpha \in \oZ\}$, the ring $\IntQ(S,\oZ)$ is nontrivial because it contains $X/2$. This example admits a rather strong generalization.

\begin{Def}\label{S(f,d) sets}
Let $f \in \Z[X]$ be nonconstant and monic. Let $d \in \Z, d \geq 1$. For each $\alpha \in \oZ$, let $\mathcal{Z}(f(X)-d\alpha) \subseteq \oZ$ be the set of all the roots of $f(X)-d\alpha$. Define $\msS(f,d)$ to be the following subset of $\oZ$:
\begin{equation*}
\msS(f,d) := \bigcup_{\alpha \in \oZ} \mcZ(f(X)-d\alpha).
\end{equation*}
Note that if $d=1$, then $\msS(f,1) = \oZ$.
\end{Def}

With this notation, $2\oZ = \msS(X,2)$. The sets $\msS(f,d)$ provide many examples of subsets of $\oZ$ for which the associated integral-valued polynomial ring is nontrivial.

\begin{Ex}\label{unbounded nontrivial examples}
Let $f \in \Z[X]$ be nonconstant and monic, and let $d \geq 2$. Then, $\msS(f,d)$ has unbounded degree, and $\IntQ(\msS(f,d), \oZ)$ contains $f(X)/d$, hence is nontrivial. 
\end{Ex}

There is more to say about the relationship between the sets $\msS(f,d)$ and $\IntQ(S,\oZ)$.

\begin{Def}\label{Polynomial closure}
Given a subset $S \subseteq \oZ$, the \textit{polynomial closure} of $S$ in $\oZ$ is the largest subset $T \subseteq \oZ$ containing $S$ such that $\IntQ(S,\oZ) = \IntQ(T,\oZ)$. We say that $S$ is \emph{polynomially closed in $\oZ$} if $S=T$. In the literature \cite{ChabPolCloVal, PS4}, the polynomial closure of $S$ is often denoted by $\overline{S}$. We will not employ this notation in order to avoid confusion with integral closures or algebraic closures.
\end{Def}

In Theorem \ref{polynomially closed sets} below, we prove that each polynomially closed subset of $\oZ$ is equal to an intersection of $\msS(f,d)$ sets. In particular, this means that if $\IntQ(S,\oZ)$ is nontrivial, then $S \subseteq \msS(f,d)$ for some $f$ and some $d \geq 2$.

We recall now the definitions of characteristic ideals and regular bases, as discussed in \cite[Section II.1]{CaCh}. For each integer $n \geq 0$, let $\mfI_n(S)$ be the $n^\text{th}$ characteristic ideal of $\IntQ(S,\oZ)$ of degree $n$, which consists of 0 and all the leading coefficients of polynomials in $\IntQ(S,\oZ)$ of degree $n$. For each $n \geq 0$, $\mfI_n(S)$ is a principal fractional ideal of $\Z$ that contains $\Z$. Let $R$ be a ring such that $\Z[X] \subseteq R \subseteq \Q[X]$. Then, a regular basis for $R$ is a $\Z$-module basis $\{b_n\}_{n \geq 0}$ for $R$ such that each $b_n$ has degree $n$. This happens precisely when $\mfI_n(S)$ is principal, generated by the leading coefficient of $b_n$, for each $n\in\N$ \cite[Theorem II.1.4]{CaCh}.

\begin{Prop}\label{Regular basis}
Let $S \subseteq \oZ$. Then, $\IntQ(S,\oZ)$ has a regular basis of the form $\{f_n(X)/d_n\}_{n \geq 0}$ such that for each $n \geq 0$, $f_n \in \Z[X]$ is monic and $d_n$ is a positive integer.
\end{Prop}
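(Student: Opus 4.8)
The plan is to build the regular basis degree by degree, at each step choosing the new basis element to have the "best possible" leading coefficient, and then to argue that this coefficient can be realized by a polynomial whose numerator is monic in $\Z[X]$. First I would recall that, by \cite[Theorem II.1.4]{CaCh}, $\IntQ(S,\oZ)$ has a regular basis if and only if each characteristic ideal $\mfI_n(S)$ is principal; since each $\mfI_n(S)$ is a fractional ideal of the PID $\Z$ containing $\Z$, it is automatically principal, say $\mfI_n(S) = \tfrac{1}{d_n}\Z$ for a unique positive integer $d_n$ (with $d_0 = 1$). So a regular basis $\{g_n\}_{n\geq 0}$ exists; the only work is to arrange the shape $g_n = f_n/d_n$ with $f_n \in \Z[X]$ monic.

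The key step is the following normalization argument, applied for each $n \geq 0$. Pick any $h \in \IntQ(S,\oZ)$ of degree $n$ whose leading coefficient generates $\mfI_n(S)$, i.e.\ has the form $\tfrac{a}{d_n}$ with $a \in \Z$, $\gcd(a,d_n)=1$ after clearing common factors; in fact, since $\tfrac{1}{d_n}$ itself lies in $\mfI_n(S)$, we may already take the leading coefficient to be exactly $\tfrac{1}{d_n}$. Multiplying through by $d_n$, write $d_n h = f_n \in \Q[X]$; a priori $f_n$ has rational, not integral, coefficients, but it \emph{is} monic of degree $n$. To fix the lower coefficients, I would subtract off integer combinations of the basis elements already constructed in lower degrees: the point is that $\IntQ(S,\oZ) \supseteq \Z[X]$, so adding any element of $\Z[X]$ to $h$ (equivalently, adding any element of $d_n\Z[X]$ to $f_n$) keeps $h$ in $\IntQ(S,\oZ)$ and does not change its degree or leading coefficient. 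Concretely, reduce each non-leading coefficient of $f_n$ modulo $d_n$ into the range $\{0,\dots,d_n-1\}$ — or more efficiently, clear all \emph{integral} coefficients of $f_n$ directly by subtracting the corresponding monomials — so that the resulting polynomial, still called $f_n$, is monic, lies in $\Z[X]$, and $f_n/d_n \in \IntQ(S,\oZ)$ has degree $n$ and leading coefficient $1/d_n$. Setting $g_n := f_n/d_n$ gives the desired normalized basis element.

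Finally I would check that $\{g_n\}_{n\geq 0}$ so constructed is a $\Z$-module basis for $\IntQ(S,\oZ)$: since each $g_n$ has degree exactly $n$ with leading coefficient generating $\mfI_n(S)$, this is the standard triangularity argument — given any $f \in \IntQ(S,\oZ)$ of degree $n$, its leading coefficient lies in $\mfI_n(S) = \tfrac{1}{d_n}\Z$, hence equals $c/d_n$ for some $c \in \Z$, so $f - c\,g_n \in \IntQ(S,\oZ)$ has degree $< n$, and we finish by induction on degree. I expect the main obstacle to be purely bookkeeping: making sure that the normalization in degree $n$ (subtracting integer combinations of $g_0,\dots,g_{n-1}$ and of monomials) does not disturb the monic-integral shape already achieved, which is why it is cleanest to do the two reductions in the right order — first clear the integral part of the numerator using $d_n\Z[X]$, which touches only coefficients that are already rational-but-not-integral, and note this operation leaves the leading coefficient $1$ untouched since $d_n \nmid 1$ unless $d_n = 1$, in which case $g_n = f_n$ is already in $\Z[X]$ and there is nothing to do.
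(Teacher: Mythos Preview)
Your argument has a real gap at the normalization step. You start with $h\in\IntQ(S,\oZ)$ of degree $n$ with leading coefficient $1/d_n$, set $f_n=d_nh$, and then try to force $f_n\in\Z[X]$ by subtracting either $\Z$-linear combinations of the previously built $g_0,\dots,g_{n-1}$ or elements of $\Z[X]$ from $h$. But both operations only change $f_n$ by an element of $\Z[X]$: subtracting $c_kg_k$ from $h$ subtracts $c_k(d_n/d_k)f_k$ from $f_n$, and since $d_k\mid d_n$ (because $X^{\,n-k}g_k$ shows $1/d_k\in\mfI_n$) and $f_k\in\Z[X]$ by induction, this lies in $\Z[X]$; adding an element of $\Z[X]$ to $h$ adds an element of $d_n\Z[X]\subseteq\Z[X]$ to $f_n$. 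So neither move can turn a genuinely non-integral coefficient of $f_n$ into an integer --- the coset $f_n+\Z[X]$ never changes. Your sentence ``reduce each non-leading coefficient of $f_n$ modulo $d_n$'' silently assumes those coefficients are already integers, which is exactly what is at stake.

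The paper closes this gap in one line by invoking \cite[Proposition II.1.7]{CaCh}: every coefficient of a degree-$n$ element of $\IntQ(S,\oZ)$ already lies in $\mfI_n(S)=\tfrac{1}{d_n}\Z$, so once you pick the regular-basis element $g_n$ with leading coefficient $1/d_n$, the polynomial $d_ng_n$ is automatically in $\Z[X]$ and no adjustment is needed. If you want to avoid that citation, you must supply an independent proof that all coefficients of $h$ lie in $\tfrac{1}{d_n}\Z$; the inductive manipulations you describe do not accomplish this.
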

\begin{proof}
Note that in the case $\IntQ(S,\oZ)$ is trivial, a regular basis is given by $\{X^n\}_{n\in\N}$.

For each $n \geq 0$, the characteristic ideal $\mfI_n(S)$ is a principal fractional ideal of $\Z$. Fix $n$, and assume that $\mfI_n(S)$ is generated by $\tfrac{a}{b} \in \Q$, where $a$ and $b$ are coprime integers and $b > 0$. Let $x, y \in \Z$ such that $ax+by=1$. Since $\Z \subseteq \mfI_n(S)$, we have $\tfrac{1}{b} = x(\tfrac{a}{b}) + y \in \mfI_n(S)$. Thus, we may assume without loss of generality that each $\mfI_n(S)$ is generated by $\tfrac{1}{d_n}$, where $d_n \in \Z$ and $d_n \geq 1$.

Now, by \cite[Proposition II.1.4]{CaCh}, $\IntQ(S,\oZ)$ has a regular basis $\{g_n\}_{n \geq 0} \subseteq \Q[X]$ such that for each $n$, $g_n$ has degree $n$ and the leading coefficient of $g_n$ generates $\mfI_n(S)$. Moreover, by \cite[Proposition II.1.7]{CaCh}, each coefficient of $g_n$ is an element of $\mfI_n(S)$. In light of the previous paragraph, we see that there are integers $c_0, \ldots, c_{n-1}$ such that
\begin{equation*}
g_n(X) = \tfrac{1}{d_n}X^n + \sum_{i=0}^{n-1} \tfrac{c_i}{d_n}X^i.
\end{equation*}
Taking $f_n(X) = X^n + \sum_{i=0}^{n-1} c_iX^i$, we achieve the desired regular basis.
\end{proof}

\begin{Thm}\label{polynomially closed sets}
Let $S \subseteq \oZ$. Let $\{f_n(X)/d_n\}_{n \geq 0}$ be a regular basis for $\IntQ(S,\oZ)$ such that each $f_n \in \Z[X]$ is monic and each $d_n$ is a positive integer. Let $\mathscr{F} = \{f_n(X)/d_n \mid d_n \geq 2\}$. 
\begin{enumerate}[(1)]%[label=\roman*.)]
\item $\IntQ(S,\oZ)$ is nontrivial if and only if $\mathscr{F} \ne \varnothing$.
\item The polynomial closure of $S$ in $\oZ$ is $\displaystyle\bigcap_{f(X)/d \in \mathscr{F}} \msS(f,d)$.
%\begin{equation*}
%\bigcap_{f(X)/d \in \mathscr{F}} \msS(f,d).
%\end{equation*}
\end{enumerate}
\end{Thm}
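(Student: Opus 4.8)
The plan is to prove both statements together, exploiting the structure of the regular basis. For part (1), the statement is essentially a reformulation: $\IntQ(S,\oZ)$ is trivial precisely when $X^n$ is a $\Z$-module generator in degree $n$ for every $n$, i.e.\ when every characteristic ideal $\mfI_n(S)$ equals $\Z$, which (given the normalization in Proposition~\ref{Regular basis}) means $d_n = 1$ for all $n$, i.e.\ $\mathscr F = \varnothing$. So part (1) follows immediately from the definition of a regular basis and the fact that $\{X^n\}_{n \geq 0}$ is a $\Z$-basis of $\Z[X]$; I would spell this out in one or two sentences.

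For part (2), write $T := \bigcap_{f(X)/d \in \mathscr F} \msS(f,d)$ (interpreting the empty intersection as $\oZ$, consistent with part (1) and the convention $\msS(f,1) = \oZ$). The key observation is that for a monic $f \in \Z[X]$ and $d \geq 1$, an element $\beta \in \oZ$ satisfies $f(\beta)/d \in \oZ$ if and only if $f(\beta) \in d\oZ$, which happens if and only if $f(\beta) = d\alpha$ for some $\alpha \in \oZ$ (here I use that $\oZ$ is integrally closed, so $f(\beta)/d \in \oZ$ forces $f(\beta)/d$ to be an algebraic integer lying in $\oZ$); equivalently, $\beta \in \mcZ(f(X) - d\alpha)$ for some $\alpha$, i.e.\ $\beta \in \msS(f,d)$. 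Therefore, for any $\beta \in \oZ$,
\begin{equation*}
\beta \in T \iff \frac{f_n(\beta)}{d_n} \in \oZ \text{ for every } n \text{ with } d_n \geq 2,
\end{equation*}
and since $f_n/d_n$ is literally $X^n$ whenever $d_n = 1$, this last condition is the same as requiring $g(\beta) \in \oZ$ for every $g$ in the regular basis $\{f_n/d_n\}_{n \geq 0}$. Because the regular basis spans $\IntQ(S,\oZ)$ as a $\Z$-module and $\Z$-linear combinations of integral-valued functions are integral-valued, this is in turn equivalent to $g(\beta) \in \oZ$ for all $g \in \IntQ(S,\oZ)$, i.e.\ $\beta$ lies in the polynomial closure of $S$. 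Hence $T$ equals the polynomial closure.

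To make this airtight I would record two auxiliary facts. First, $S \subseteq \msS(f_n, d_n)$ for each $n$: this is because $f_n/d_n \in \IntQ(S,\oZ)$, so $f_n(s)/d_n \in \oZ$ for all $s \in S$, which by the observation above puts $s \in \msS(f_n,d_n)$; thus $S \subseteq T$, as the polynomial closure must contain $S$. Second, I need that a subset $U \subseteq \oZ$ containing $S$ satisfies $\IntQ(S,\oZ) = \IntQ(U,\oZ)$ if and only if every $g \in \IntQ(S,\oZ)$ is integral-valued on all of $U$ — this is immediate from $\IntQ(U,\oZ) \subseteq \IntQ(S,\oZ)$ always holding when $S \subseteq U$. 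Combining: $T$ contains $S$, every basis element (hence every element) of $\IntQ(S,\oZ)$ is integral-valued on $T$ by construction, so $\IntQ(S,\oZ) = \IntQ(T,\oZ)$; and $T$ is the \emph{largest} such set because any $\beta$ in the polynomial closure must satisfy $f_n(\beta)/d_n \in \oZ$ for all $n$ (as $f_n/d_n \in \IntQ(S,\oZ)$), forcing $\beta \in T$ by the iff above. That gives $T = $ polynomial closure of $S$.

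I expect the only subtle point — the "main obstacle," though it is minor — to be the clean justification that $f(\beta)/d \in \oZ \iff \beta \in \msS(f,d)$, which rests on $\oZ$ being integrally closed in $\oQ$ together with the fact that $f(\beta) - d\alpha = 0$ has all its roots in $\oZ$ when $f$ is monic and $\alpha \in \oZ$ (so that $\mcZ(f(X) - d\alpha)$ is genuinely a subset of $\oZ$, as asserted in Definition~\ref{S(f,d) sets}). Everything else is bookkeeping with regular bases and the definition of polynomial closure.
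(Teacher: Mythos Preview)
Your proposal is correct and follows essentially the same route as the paper: both arguments rest on the equivalence $\beta \in \msS(f,d) \iff f(\beta)/d \in \oZ$, and then reduce membership in the polynomial closure to integral-valuedness of the regular basis elements via $\Z$-linear combinations. One small slip to fix: you assert that ``$f_n/d_n$ is literally $X^n$ whenever $d_n = 1$,'' but Proposition~\ref{Regular basis} only guarantees $f_n$ is monic of degree $n$, not that $f_n = X^n$; the correct (and equally easy) reason that $f_n(\beta)/d_n \in \oZ$ in that case is simply $f_n \in \Z[X]$ and $\beta \in \oZ$, which is exactly what the paper invokes.
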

\begin{proof}
Part (1) is clear. For (2), let $T$ be the polynomial closure of $S$ in $\oZ$. If $\mathscr{F}$ is empty, then $T = \oZ$ because $\IntQ(S,\oZ)=\Z[X]=\IntQ(\oZ,\oZ)$, and an intersection of an empty collection of subsets of $\oZ$ is also equal to $\oZ$. So, assume that $\mathscr{F} \ne \varnothing$. Let $\alpha \in T$, and let $f(X)/d \in \mathscr{F}$. Then, $f(X)/d \in \IntQ(S,\oZ) = \IntQ(T,\oZ)$, so $f(\alpha)/d \in \oZ$ and $\alpha \in \msS(f,d)$. Thus,
\begin{equation*}
T \subseteq \bigcap_{f(X)/d \in \mathscr{F}} \msS(f,d).
\end{equation*}
Conversely, let $\beta \in \bigcap_{f(X)/d \in \mathscr{F}} \msS(f,d)$, let $g \in \IntQ(S,\oZ)$, and let $m = \deg g$. Since $\{f_n(X)/d_n\}_{n \geq 0}$ is a regular basis for $\IntQ(S,\oZ)$, there exist $c_0, \ldots, c_m \in \Z$ such that
\begin{equation*}
g(X) = \sum_{i=0}^m \dfrac{c_if_i(X)}{d_i}.
\end{equation*}
For each $0 \leq i \leq m$, if $d_i = 1$ then clearly $f_i(\beta)/d_i \in \oZ$. If $d_i \geq 2$, then $f_i(X)/d_i \in \mathscr{F}$, so $\beta \in \msS(f_i,d_i)$, which means that $f_i(\beta)/d_i \in \oZ$. It follows that $g(\beta) \in \oZ$. Thus, $\beta \in T$, and therefore $T = \bigcap_{f(X)/d \in \mathscr{F}} \msS(f,d)$.
\end{proof}

We close the paper by using a theorem of McQuillan to prove that $\mathcal{A}_1 = \Z$ is polynomially closed in $\oZ$. It is an open problem to determine whether $\mathcal{A}_n$ is polynomially closed in $\oZ$ when $n \geq 2$, although we suspect that this is the case.

\begin{Thm}\cite[Theorem]{McQ Split}\label{McQ Theorem}
Let $\alpha\in\oZ$ and let $\Int(\Z)(\alpha)=\{f(\alpha)\mid f\in\Int(\Z)\}$. Then,
$$\Int(\Z)(\alpha)=\bigcap_{P\in\mathcal{S}_1}O_{\Q(\alpha),P}$$
where the intersection is over the family $\mathcal{S}_1$ of prime ideals $P$ of $O_{\Q(\alpha)}$ which are totally split over $\Z$, that is $e(P\mid P\cap\Z)f(P\mid P\cap\Z)=1$. 
\end{Thm}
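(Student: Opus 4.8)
The plan is to prove the stated equality of subrings of $\Q(\alpha)$ by a double inclusion. Throughout, write $m\in\Z[X]$ for the minimal polynomial of $\alpha$, $n=\deg m=[\Q(\alpha):\Q]$, and $O=O_{\Q(\alpha)}$.

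For $\Int(\Z)(\alpha)\subseteq\bigcap_{P\in\mathcal{S}_1}O_{\Q(\alpha),P}$: if $P\in\mathcal{S}_1$ lies over the prime $p$, then $e(P\mid p)f(P\mid p)=1$ forces the $P$-adic completion of $\Q(\alpha)$ to be $\Q_p$, so $P$ determines a $\Q$-embedding $\iota_P\colon\Q(\alpha)\hookrightarrow\Q_p$ with $\iota_P(\alpha)$ a root of $m$ in $\Z_p$, $\iota_P(O_{\Q(\alpha),P})\subseteq\Z_p$, and $v_P=v_p\circ\iota_P$. For $f\in\Int(\Z)$, density of $\Z$ in $\Z_p$ and continuity give $f(\Z_p)\subseteq\Z_p$, hence $v_P(f(\alpha))=v_p\big(f(\iota_P(\alpha))\big)\ge 0$, i.e. $f(\alpha)\in O_{\Q(\alpha),P}$. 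This is the easy half.

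For the reverse inclusion, fix $\beta$ in the intersection and write $\beta=g(\alpha)$ with $g\in\Q[X]$, $\deg g<n$; it suffices to produce $q\in\Q[X]$ with $g+qm\in\Int(\Z)$, since then $f:=g+qm$ has $f(\alpha)=\beta$. For all but finitely many primes $p$ one has $g\in\Z_p[X]\subseteq\Int(\Z_p)$, so only finitely many primes require attention; and a local solution $q_p\in\Q_p[X]$ with $g+q_pm\in\Int(\Z_p)$ can first be replaced by one in $\Q[X]$ (any sufficiently $p$-adically close polynomial works, since $(q_p-q_p')m\in p^N\Z_p[X]$ once $q_p'$ is close enough to $q_p$) and then these finitely many $q_p$ amalgamated into a single $q\in\Q[X]$ by a coefficientwise Chinese Remainder argument that keeps $q$ integral away from the bad primes. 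Hence everything reduces to a local statement at a fixed $p$: factor $m=\prod_{i=1}^{r}(X-\alpha_i)\cdot\prod_{j=r+1}^{s}m_j$ over $\Q_p$, where $\alpha_1,\dots,\alpha_r\in\Z_p$ are the roots of $m$ in $\Q_p$ — which correspond exactly to the primes over $p$ lying in $\mathcal{S}_1$ — and $\deg m_j\ge 2$; put $L_j=\Q_p[X]/(m_j)$. The crux is the following: the image of $\Int(\Z_p)$ under the projection $\Q_p[X]\to\Q_p[X]/(m)\cong\prod_{i=1}^{r}\Q_p\times\prod_{j=r+1}^{s}L_j$ equals $\prod_{i=1}^{r}\Z_p\times\prod_{j=r+1}^{s}L_j$. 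Granting this, $g$ lies in that image precisely when $g(\alpha_i)\in\Z_p$ for all $i$, and this holds because $\beta=g(\alpha)\in O_{\Q(\alpha),P}$ for each $P\in\mathcal{S}_1$ over $p$; that produces the desired $q_p$.

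The genuine obstacle is the ``$\supseteq$'' half of this local claim: realizing, by honestly integer-valued polynomials over $\Z_p$, every residue modulo $m$ that is integral at $\alpha_1,\dots,\alpha_r$. I would build such polynomials in two pieces. First, for the split coordinates: the identity $v_p\big(\binom{p^{k}u}{p^{k}}\big)=0$ for $u\in\Z_p^{\times}$ shows that $\binom{X-\alpha_j}{p^{k}}\in\Int(\Z_p)$ with $k=v_p(\alpha_i-\alpha_j)$ vanishes at $\alpha_j$ and is a unit at $\alpha_i$; products of these yield Lagrange-type idempotents $e_1,\dots,e_r\in\Int(\Z_p)$ with $e_i(\alpha_{i'})=\delta_{ii'}$, so a $\Z_p$-combination of the $e_i$ realizes any prescribed integral values at the $\alpha_i$. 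Second, for the non-split coordinates one adjusts by a multiple of $\prod_i(X-\alpha_i)$: with $\tilde m=\prod_{j>r}m_j$ (which has no root in $\Z_p$, whence $\tilde m/p^{c}\in\Int(\Z_p)$ for some $c\ge 0$ by compactness of $\Z_p$), one takes an interpolation polynomial $w^0\in\Q_p[X]$ with the required residues modulo $\tilde m$ and then seeks $v\in\Q_p[X]$ with $\prod_i(X-\alpha_i)\,w^0+mv\in\Int(\Z_p)$; here the $p$-adic uniform density of polynomials in $C(\Z_p,\Q_p)$ (Mahler's theorem) controls the region bounded away from the $\alpha_i$, while near each $\alpha_i$ the vanishing of $\prod_i(X-\alpha_i)$ makes integrality automatic. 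Summing the two pieces gives $h\in\Int(\Z_p)$ with the prescribed residue modulo $m$. (Alternatively, one could bypass the $\Z_p$-analysis using the known description of $\operatorname{Spec}\Int(\Z)$ — its nonzero primes above $p$ are the $\mathfrak M_{p,a}=\{f:v_p(f(a))>0\}$ for $a\in\Z_p$, with discrete-valuation localizations — and realize $\Int(\Z)(\alpha)$ as the intersection of those valuation overrings pulled back along the $\Q$-embeddings $\Q(\alpha)\hookrightarrow\Q_p$, the embeddings that occur being exactly those attached to totally split primes.) I expect the Chinese Remainder amalgamation and the valuation bookkeeping to be routine; the flexibility statement for $\Int(\Z_p)$ is the one real difficulty.
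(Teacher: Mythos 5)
The paper does not prove this statement: it is quoted verbatim from McQuillan \cite{McQ Split} and then used as a black box to deduce that $\Z$ is polynomially closed in $\oZ$. So there is no in-house argument to compare yours against; what follows assesses the proposal on its own. The easy inclusion is correct, and the global-to-local reduction and the coefficientwise amalgamation are sound: $\Int(\Z)=\bigcap_p\bigl(\Int(\Z_p)\cap\Q[X]\bigr)$, only finitely many primes carry denominators, and one can pick each coefficient of $q$ in $\Z\bigl[\tfrac{1}{p_1\cdots p_k}\bigr]$ and solve the finitely many congruences. The identification of the linear factors of $m$ over $\Q_p$ with the totally split primes above $p$ is also correct (it holds even when $p\mid\iota_\alpha$, since $\Q(\alpha)\otimes_\Q\Q_p\cong\prod_{P\mid p}\Q(\alpha)_P$ unconditionally).

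The genuine gap, which you flag yourself, is the surjectivity of $\Int(\Z_p)\to\prod_i\Z_p\times\prod_j L_j$. The Lagrange interpolants from $\binom{X-\alpha_j}{p^k}$ prescribe the split coordinates, but they produce uncontrolled $L_j$-components; you then need, for \emph{every} tuple in $\prod_j L_j$, a $w$ with the right residues modulo $\tilde m$ for which $\prod_i(X-\alpha_i)\,w\in\Int(\Z_p)$. That is a nontrivial assertion (note that the target residues can have arbitrarily negative valuation). Your compactness/Mahler sketch is the right idea — $v_p(\tilde m(x))\le c$ uniformly on $\Z_p$, so one should approximate $-w^0/\tilde m$ by a polynomial $v$ to within $p^c$ in sup-norm and check that $w^0+\tilde m v$ is then integer-valued — but as written this is a plan, not a proof: the approximation step, the behaviour near the $\alpha_i$, and why the residue data modulo $\tilde m$ survives all need to be carried out. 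Your parenthetical alternative — pulling back the discrete-valuation overrings $\mathfrak M_{p,a}$ of $\Int(\Z)$ along the embeddings $\Q(\alpha)\hookrightarrow\Q_p$ and observing that only those landing in $\Q_p$ (equivalently, the totally split primes) impose conditions — is cleaner and sidesteps the interpolation; if you intend to complete this, that is the route to develop.
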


\begin{Prop}
$\Z$ is polynomially closed in $\oZ$.% that is, $\{\alpha\in\oZ\mid \forall f\in\Int(\Z), f(\alpha)\in\oZ\}=\Z$.
\end{Prop}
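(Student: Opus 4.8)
The plan is to show that $\Z$ equals its polynomial closure $T$ in $\oZ$, i.e.\ that no algebraic integer $\alpha\notin\Z$ can be adjoined to $\Z$ without enlarging the ring of integral-valued polynomials. Since $\Int_\Q(\Z,\oZ)=\Int(\Z)$ (every $f\in\Q[X]$ with $f(\Z)\subseteq\Z$ is already in $\Int(\Z)$, and conversely $\Int(\Z)\subseteq\Int_\Q(\Z,\oZ)$ trivially), it is equivalent to show: if $\alpha\in\oZ$ and $f(\alpha)\in\oZ$ for every $f\in\Int(\Z)$, then $\alpha\in\Z$. So I would take such an $\alpha$, observe that automatically $f(\alpha)\in\oZ\cap\Q(\alpha)=O_{\Q(\alpha)}$ for all $f\in\Int(\Z)$, and hence $\alpha\in T$ forces $f(\alpha)\in O_{\Q(\alpha)}$ for all $f\in\Int(\Z)$, i.e.\ $\Int(\Z)(\alpha)\subseteq O_{\Q(\alpha)}$.

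The key tool is McQuillan's Theorem~\ref{McQ Theorem}, which identifies $\Int(\Z)(\alpha)$ with $\bigcap_{P\in\mathcal S_1}O_{\Q(\alpha),P}$, the intersection of the localizations of $O_{\Q(\alpha)}$ at the primes $P$ that are totally split over $\Z$. So the condition ``$\alpha\in T$'' says nothing new: it always holds as far as these split primes are concerned. The point is rather to go the other way. Suppose $\alpha\notin\Z$, so $n:=[\Q(\alpha):\Q]\geq 2$. I want to produce a polynomial $g\in\Int(\Z)$ of degree $<n$ with $g(\alpha)\notin O_{\Q(\alpha)}$; such a $g$ witnesses that $\alpha\notin T$. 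The natural candidate comes from a prime $p$ with $p\mid\iota_\alpha$ (one exists iff $\Z[\alpha]\subsetneq O_{\Q(\alpha)}$, which can always be arranged: if $\alpha$ itself has $\iota_\alpha=1$, replace $\alpha$ by a suitable element of $\Z[\alpha]$, or instead argue directly that $\alpha\notin\Z$ already suffices). Concretely, when $p\mid\iota_\alpha$, there is an element $\beta\in O_{\Q(\alpha)}\setminus(\Z[\alpha]+pO_{\Q(\alpha)})$; writing $\beta=h(\alpha)$ for $h\in\Q[X]$ and clearing denominators, one extracts from $h$ a polynomial $g\in\Q[X]$ of degree $<n$ that maps $\Z$ into $\Z$ (hence lies in $\Int(\Z)$, in fact in $\Z[X]$ modulo adjusting, but with $g(\alpha)\notin O_{\Q(\alpha)}$). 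Actually the cleanest route: recall from Lemma~\ref{Psi and index}(3) that if $f\in\Int_\Q(\{\alpha\},\oZ)$ has degree $<n$ then $\iota_\alpha f\in\Z[X]$; so if $\alpha\in T$ then $\Int(\Z)$ has no element of degree $<n$ outside $\tfrac1{\iota_\alpha}\Z[X]$. I would then contradict this by exhibiting an explicit degree-$<n$ element of $\Int(\Z)$ with a genuine denominator dividing $\iota_\alpha$ — e.g.\ a suitable truncated binomial polynomial $\binom{X}{k}$ with $2\le k<n$ (which lies in $\Int(\Z)$, has denominator $k!>1$) and checking, via a split-prime argument or a direct ramification computation, that $\binom{\alpha}{k}\notin O_{\Q(\alpha)}$ for an appropriately chosen $\alpha$ — but this last check is exactly where McQuillan's description is used to rule out the obstruction.

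The main obstacle is the last step: showing that the candidate polynomial actually fails to be integral at $\alpha$. McQuillan's theorem says $\Int(\Z)(\alpha)=O_{\Q(\alpha)}$ would follow if \emph{every} prime of $O_{\Q(\alpha)}$ were totally split over $\Z$ — which never happens for $n\geq 2$ since a degree-$n$ extension has primes with $ef>1$ (or at least, $\Z$ is not the ring of integers of any extension of degree $\ge 2$). So the real content is: pick a prime $\mfp$ of $O_{\Q(\alpha)}$ that is \emph{not} totally split (such $\mfp$ exists because $\Q(\alpha)/\Q$ is ramified somewhere or has a non-split prime, for any $\alpha\notin\Z$), and use $\mathcal S_1$ avoiding $\mfp$ to find $x\in\bigcap_{P\in\mathcal S_1}O_{\Q(\alpha),P}$ with $x\notin O_{\Q(\alpha),\mfp}$; then by McQuillan $x=g(\alpha)$ for some $g\in\Int(\Z)$, and $g(\alpha)=x\notin O_{\Q(\alpha)}\supseteq\oZ\cap\Q(\alpha)$, contradicting $\alpha\in T$. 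The technical work is the weak-approximation-type construction of such an $x$ inside the intersection over the split primes while escaping $\mfp$: one needs that $\mfp$ together with finitely many split primes can be separated, which follows from strong approximation / CRT in the Dedekind domain $O_{\Q(\alpha)}$ once one knows $\mathcal S_1\neq\{\text{all primes}\}$. Assembling these pieces gives $T=\Z$, i.e.\ $\Z$ is polynomially closed in $\oZ$.
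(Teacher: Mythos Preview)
Your final paragraph is correct and is essentially the paper's argument: for $\alpha\in\oZ\setminus\Z$, not every prime of $O_{\Q(\alpha)}$ is totally split over $\Z$, so by McQuillan's theorem $\Int(\Z)(\alpha)=\bigcap_{P\in\mathcal S_1}O_{\Q(\alpha),P}$ is a \emph{proper} overring of $O_{\Q(\alpha)}$, hence some $f\in\Int(\Z)$ has $f(\alpha)\notin O_{\Q(\alpha)}=\oZ\cap\Q(\alpha)$, and $\alpha$ is not in the polynomial closure. The paper simply asserts ``proper overring'' where you spell out the approximation/CRT step producing $x\in\bigcap_{P\in\mathcal S_1}O_{\Q(\alpha),P}\setminus O_{\Q(\alpha),\mfp}$; both are fine, since for a Dedekind domain omitting any maximal ideal from the intersection of localizations strictly enlarges it (e.g.\ take any $x\in\mfp^{-1}\setminus O_{\Q(\alpha)}$).

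However, the long middle detour through indices and binomial coefficients is not just unnecessary but flawed. You cannot ``replace $\alpha$ by a suitable element of $\Z[\alpha]$'' to arrange $\iota_\alpha>1$: the statement to be proved concerns the given $\alpha$, and plenty of $\alpha\notin\Z$ (e.g.\ $\alpha=i$, or any root of unity) have $\iota_\alpha=1$, in which case Lemma~\ref{Psi and index}(3) gives no obstruction at all and the binomial-coefficient idea has nothing to grab onto. The only route that works uniformly is the McQuillan argument you end with; drop the rest.
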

\begin{proof}
Given $\alpha\in\oZ\setminus\Z$, define $\Int(\Z)(\alpha)$ as in Theorem \ref{McQ Theorem}. Since not all the prime ideals of $O_{\Q(\alpha)}$ are totally split over $\Z$, by Theorem \ref{McQ Theorem} $\Int(\Z)(\alpha)$ is a proper overring of $O_{\Q(\alpha)}$. Thus, there exists $f\in\Int(\Z)$ such that $f(\alpha)\notin O_{\Q(\alpha)}$. This is equivalent to having $f(\alpha) \notin \oZ$, so we conclude that $\alpha$ is not in the polynomial closure of $\Z$ in $\oZ$.
\end{proof}

\begin{Conj}
For each $n \geq 1$, $\mathcal{A}_n$ is polynomially closed in $\oZ$.
\end{Conj}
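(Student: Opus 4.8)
The plan is to extend the $n=1$ argument (Theorem~\ref{McQ Theorem} and the proof preceding the Conjecture) by passing to a local, valuation-theoretic setting. Fix $\alpha\in\oZ$ with $[\Q(\alpha):\Q]=m>n$; it suffices to produce $f\in\IntQ(\An)$ with $f(\alpha)\notin\oZ$, i.e.\ to show that $\alpha$ does not lie in the polynomial closure of $\An$ in $\oZ$. First I would reduce to a single prime: by Proposition~\ref{localization IntQSZ V1}, $\IntQ(\An)_{(p)}=\IntQ(\An,\overline{\Z_{(p)}})$, so it is enough to find one prime $p$ and some $g\in\IntQ(\An,\overline{\Z_{(p)}})$ with $g(\alpha)\notin\overline{\Z_{(p)}}$; clearing the prime-to-$p$ denominator of $g$ then yields the required $f$. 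Next, by Proposition~\ref{IntAnpcompletion} together with Definition~\ref{Sigma_p} and the identity $u_{\oP}=v_p\circ\tau_{\oP}$ recalled before Lemma~\ref{transfer to oZp}, one has $\IntQ(\An,\overline{\Z_{(p)}})=\IntQ(\Sigma_p(\An),\oZp)$, and for $\alpha\in\oZ$ the condition $g(\alpha)\notin\overline{\Z_{(p)}}$ is equivalent to $g(\beta)\notin\oZp$ for some root $\beta\in\oZp$ of the minimal polynomial of $\alpha$ over $\Q$. Thus the whole problem becomes: for every $\alpha$ of degree $m>n$, exhibit a prime $p$, a root $\beta\in\oZp$ of the minimal polynomial of $\alpha$, and $g\in\IntQ(\Sigma_p(\An),\oZp)$ with $g(\beta)\notin\oZp$ --- equivalently, show that some such $\beta$ does not lie in the polynomial closure in $\oZp$ of $\Sigma_p(\An)$, the set of roots in $\oZp$ of monic irreducible polynomials in $\Z[X]$ of degree at most $n$.

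For the main construction I would use the polynomials $\Psi_{p,n}/p$, which lie in $\IntQ(\An)$ by Lemma~\ref{Psi and index}(2) and hence in $\IntQ(\Sigma_p(\An),\oZp)$. A residue-field computation in the style of the proof of Lemma~\ref{Bounded ram and res field} shows that if $\beta$ generates an \emph{unramified} extension of $\Q_p$ of degree $>n$ whose residue field is generated by $\overline{\beta}$, then $v_p(\Psi_{p,n}(\beta))=0<v_p(p)$, so $\Psi_{p,n}(\beta)/p\notin\oZp$. In terms of $\alpha$ this means it suffices to find a prime $p$ with a prime ideal $P$ of $O_{\Q(\alpha)}$ such that $e(P\mid p)=1$, $f(P\mid p)>n$, and $\overline{\alpha}$ generates $O_{\Q(\alpha)}/P$. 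By Chebotarev's density theorem such a $p$ exists whenever the Galois group $G$ of the splitting field of the minimal polynomial of $\alpha$, as a transitive subgroup of $S_m$, contains a permutation with a cycle of length $>n$: for a density-positive set of $p$ the minimal polynomial of $\alpha$ then has, modulo $p$, an irreducible factor of degree $>n$, which (for $p$ unramified and $p\nmid\iota_\alpha$) supplies the desired $P$ and $\beta$.

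The main obstacle is the complementary case, in which $G$ is a transitive subgroup of $S_m$, $m>n$, with no element having a cycle of length $>n$ (the smallest instance being $m=n+1$ with $G$ a transitive group of degree $n+1$ lacking an $(n+1)$-cycle, such as $G\cong V_4$ for $n=3$). Then $\Psi_{p,n}/p$ never detects $\alpha$; worse, every root $\beta\in\oZp$ of the minimal polynomial of $\alpha$ satisfies $[\Q_p(\beta):\Q_p]\le n$ at every prime, so one would have to separate $\beta$ from $\Sigma_p(\An)$ using some genuinely different family of integer-valued polynomials on $\An$ (composites and products of shifted factors $X^{p^k}-X-c$, or polynomials tailored to ramified primes). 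Clarifying whether this is possible is the crux, and I would attack it by first determining exactly which elements of $\oZp$ lie in the polynomial closure of $\Sigma_p(\An)$. Here one meets a structural difficulty: $\Sigma_p(\An)$ is $p$-adically dense in the set of \emph{all} algebraic integers of degree $\le n$ over $\Q_p$ (Krasner's lemma), and the topological closure is contained in the polynomial closure (since $\oZp$ is closed and polynomials are continuous), so \emph{every} element of $\oZp$ of local degree $\le n$ already lies in that polynomial closure. Consequently the Conjecture, via the reduction above, is equivalent to the global assertion that no algebraic integer of degree $>n$ over $\Q$ has all of its local degrees $e(P\mid p)f(P\mid p)$ bounded by $n$ --- and this appears to fail already for $n=2$ (for example $\Q(i,\sqrt{17})$, whose decomposition groups all have order $\le 2$). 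So I expect the honest outcome of this plan to be a corrected statement of the Conjecture (restricting to $\alpha$ whose Galois closure carries a prime of local degree $>n$) rather than a proof of the original one; identifying the precise correct hypothesis is the step I consider genuinely hard.
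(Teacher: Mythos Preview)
The statement you were given is labeled a \emph{Conjecture} in the paper, and the paper offers no proof of it; the authors explicitly say that the case $n\ge 2$ is open. So there is no ``paper's own proof'' to compare against. What is worth commenting on is the content of your proposal itself, because your analysis does not move toward a proof of the conjecture---it moves toward a \emph{disproof}, and a correct one.

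Your local reduction is sound. Using Proposition~\ref{localization IntQSZ V1}, \eqref{globallocal}, and Proposition~\ref{IntAnpcompletion}, the statement ``$\alpha$ lies in the polynomial closure of $\An$ in $\oZ$'' is equivalent to: for every prime $p$ and every $\Q$-embedding $\tau:\Q(\alpha)\hookrightarrow\oQp$, the image $\beta=\tau(\alpha)$ lies in the polynomial closure of $\Sigma_p(\An)$ in $\oZp$ (with respect to $\Q$-coefficient polynomials). Your Krasner argument is also correct: every $\gamma\in\oZp$ with $[\Q_p(\gamma):\Q_p]\le n$ is a $p$-adic limit of elements of $\Sigma_p(\An)$ (approximate the minimal polynomial of $\gamma$ over $\Q_p$ by a monic polynomial in $\Z[X]$ of the same degree and use continuity of roots), and since $\oZp$ is closed and polynomials are continuous, any such $\gamma$ lies in the polynomial closure of $\Sigma_p(\An)$. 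Consequently, if $\alpha\in\oZ$ has degree $>n$ over $\Q$ but $e(P\mid p)f(P\mid p)\le n$ for every prime $P$ of $O_{\Q(\alpha)}$, then $\alpha$ lies in the polynomial closure of $\An$, and the conjecture fails.

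Your proposed example $K=\Q(i,\sqrt{17})$ for $n=2$ genuinely works. Since $17\equiv 1\pmod{8}$, one has $\sqrt{17}\in\Q_2$, so the completions of $K$ at primes above $2$ are $\Q_2(i)$, of degree $2$; since $4^2\equiv -1\pmod{17}$, one has $i\in\Q_{17}$, so the completions above $17$ are $\Q_{17}(\sqrt{17})$, again of degree $2$; and at every unramified prime the decomposition group is cyclic inside the Klein four-group, hence of order at most $2$. Thus any primitive integral element $\alpha$ of $K$ has degree $4>2$ over $\Q$ yet all local degrees at most $2$, so $\alpha$ lies in the polynomial closure of $\mathcal A_2$. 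This refutes the conjecture for $n=2$, and analogous multiquadratic constructions refute it for every $n\ge 2$.

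Two small caveats. First, your claimed \emph{equivalence} between the conjecture and the local-degree assertion is only fully justified in one direction (the direction you actually use); showing that an $\alpha$ with some local degree $>n$ must lie outside the polynomial closure of $\An$ would require knowing that the polynomial closure of $\Sigma_p(\An)$ in $\oZp$ coincides with its topological closure, which you have not argued and which is a separate question. Second, none of this is a ``gap'' in a purported proof---it is a feature: you have correctly identified that the conjecture, as stated, is false, and your closing suggestion to replace it by a corrected statement (restricting to $\alpha$ admitting a prime of local degree $>n$) is the right conclusion to draw.
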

\vskip0.5cm

\textbf{Acknowledgements}: The first author wishes to thank D. Dobbs for raising questions about the chain of Pr\"ufer domains $\IntQ(\mathcal{A}_n),n\in\N$ which are the original motivations of this paper. He also wishes to thank S. Mulay for the hospitality during his visit at the Department of Mathematics of the University of Tenneessee in Knoxville, USA in 2014. He thanks also S. Checcoli for stimulating discussions around Lemma \ref{bounded degree Galois closure}.

\end{document}